\newcommand{\RR}{\mathbb{R}}
\newcommand{\NN}{\mathbb{N}}
\newcommand{\one}{\mathbf{1}}
\newcommand{\norm}[1]{\|#1\|}
\newcommand{\abs}[1]{|#1|}
\newcommand{\scp}[2]{\langle #1,#2\rangle}
\newcommand{\dd}{\mathrm{d}}
\newcommand{\bigO}{\mathcal{O}}
\DeclareMathOperator{\diag}{diag}
\newcommand{\calL}{\mathcal{L}}
\newcommand{\calF}{\mathcal{F}}
\newcommand{\frakM}{\mathfrak{M}}
\newcommand{\Kern}{\operatorname{kern}}
\newcommand{\weak}{\rightharpoonup}
\newtheorem{assumption}{Assumption}
\title{Quadratically regularized optimal transport}
\author{Dirk A. Lorenz \and Paul Manns \and Christian Meyer}
\institute{Dirk A. Lorenz, \email{d.lorenz@tu-braunschweig.de} \at TU Braunschweig, Institute of Analysis and Algebra, Germany\and Paul Manns, \email{p.manns@tu-braunschweig.de} \at TU Braunschweig, Institute of Mathematical Optimization,  Germany\and Christian Meyer, \email{christian.meyer@math.tu-dortmund.de} \at TU Dortmund, Fakult\"at f\"ur Mathematik, Germany}
\begin{document}

\maketitle
\begin{abstract}
  We investigate the problem of optimal transport in the so-called
  Kantorovich form, i.e.\ given two Radon measures on two compact sets,
  we seek an optimal transport plan which is another Radon measure on
  the product of the sets that has these two measures as marginals and
  minimizes a certain cost function.
  
  We consider quadratic regularization of the problem, which forces
  the optimal transport plan to be a square integrable function rather
  than a Radon measure. We derive the dual problem and show strong
  duality and existence of primal and dual solutions to the
  regularized problem. Then we derive two algorithms to solve the dual
  problem of the regularized problem: A Gauss-Seidel method and a
  semismooth quasi-Newton method and investigate both methods
  numerically. Our experiments show that the methods perform well even
  for small regularization parameters. Quadratic regularization is of
  interest since the resulting optimal transport plans are sparse,
  i.e.\ they have a small support (which is not the case for the often
  used entropic regularization where the optimal transport plan always
  has full measure).
\end{abstract}

\keywords{optimal transport, regularization, semismooth Newton method, Gauss-Seidel method, duality}
\subclass{49Q20, 65D99, 90C25}

\section{Introduction}
\label{sec:intro}

In this paper we will investigate a regularized version of the optimal
transport problem. Optimal transport dates back to the work of Monge
in 1781 but the problem formulation we use here is the one of
Kantorovich~\cite{kantorovich1942translocation}.  Let us fix some
notation and formulate the problem: Let
$\Omega_{1}\subset\RR^{d_{1}}$, $\Omega_{2}\subset \RR^{d_{2}}$ be two
compact domains, denote $\Omega=\Omega_{1}\times \Omega_{2}$, and
assume we are given two positive regular Radon measures $\mu_{1}$ and $\mu_{2}$ on
$\Omega_{1}$ and $\Omega_{2}$, respectively. Further we assume that a cost function
$c:\Omega_{1}\times\Omega_{2}\to \RR$ is given that models the cost of
transporting a unit of mass from $x_{1}\in\Omega_{1}$ to
$x_{2}\in\Omega_{2}$. The optimal transport problem asks to find a
transport plan $\pi$, which is a Radon measure on $\Omega$, such that
it has minimal overall transport cost
$\int_{\Omega}c(x_{1},x_{2})\,\dd\pi(x_{1},x_{2})$ among all measures
$\pi$ which have $\mu_{1}$ and $\mu_{2}$ as first and second
marginals, respectively, i.e.\ for all Borel sets $A\in\Omega_{1}$ it
holds that $\pi(A\times\Omega_{2}) = \mu_{1}(A)$ and for all Borel
sets $B\in\Omega_{2}$ it holds that
$\pi(\Omega_{1}\times B) = \mu_{2}(B)$.  This problem has been studied
extensively and we refer to the
books~\cite{rachev1998massI,rachev1998massII,villani2003topics,villani2008optimal,santambrogio2015optimal}. One particular result is, that an optimal plan $\pi^{*}$ exists and
that the support of optimal plans is contained in the so-called
$c$-superdifferential of a $c$-concave function~\cite[Theorem
1.13]{ambrosio2013user}. For many cost functions $c$, this means that
optimal transport plans are supported on small sets and that they are
in fact singular with respect to the Lebesgue measure on
$\Omega$. This makes the numerical treatment of optimal transport
problems difficult and one can employ regularization to obtain
approximately optimal plans $\pi$ that are functions on $\Omega$. The
regularization method that has got the most attention recently is
regularization with the negative entropy of $\pi$ and we refer
to~\cite{papadakis2014proximal,cuturi2016smoothed,carlier2017convergence}. Entropic
regularization has gotten popular in machine learning applications due
to the fact that it allows for the very simple Sinkhorn algorithm (in
the discrete case), see~\cite{cuturi2013sinkhorn,genevay2018learning}
and also~\cite{peyre2019computational} for a recent and thorough
review of the computational aspects of optimal transport.

Regularizations different from entropic regularization has been much
less studied. We are only aware of works in the discrete case,
e.g.\ \cite{blondel2018smoothOT,essid2018quadratically}. In this work we
will investigate the case where we regularize the problem in
$L^{2}(\Omega)$. The paper is organized as follows: In
Section~\ref{sec:qrot-cont-disc} we state the problem and analyze
existence and duality. It will turn out that existence of solutions of
the dual problem will be quite tricky to show, but we will show that
dual solutions exist in respective $L^{2}$ spaces and that a
straightforward optimality system characterizes primal-dual
optimality. In Section~\ref{sec:algorithms} we derive two different
algorithms for the discrete version of the quadratically regularized
optimal transport problem, and in Section~\ref{sec:examples} we
comment on a simple discretization scheme and report numerical
examples.

\paragraph{Notation.}
We will abbreviate $x_{+} = \max(x,0)$ (and will apply this also to functions and to measures where $_{+}$ will mean the positive part from the Hahn-Jordan decomposition).
By $C(\Omega)$ we denote that space of continuous functions on $\Omega$ (and we will always work on compact sets) equipped with the supremum norm $\norm{\cdot}_{\infty}$ and by $\frakM(\Omega)$ we denote the space of Radon measures on a compact domain and we use the norm $\norm{\mu}_{\frakM} = \sup\{\int f\, \dd\mu\mid f\in C(\Omega),\ |f|\leq 1\}$. The Lebesgue measure will be $\lambda$ (and we also use $\lambda_{1}$ and $\lambda_{2}$ to specify the Lebesgue measure on sets $\Omega_{1}$ and $\Omega_{2}$, respectively). For convenience, we use $\abs{\Omega}$ for the Lebesgue measure of the set $\Omega$. Furthermore, for a Radon measure $w \in \frakM$, we denote the absolutely and singular part 
arising from the Lebesgue decomposition with respect to the Lebesgue measure by $w_{ac}$ and $w_s$, i.e.\ they satisfy 
$w_{ac} \ll \lambda$ and $w_s \perp \lambda$. Duality pairings are denoted by $\langle \cdot, \cdot \rangle$. If
both arguments of the duality pairing are positive and the duality pairing
does not necessarily exist,
e.g.\ for $\psi \in \mathfrak{M}(\Omega)$ and $x \in L^2(\Omega)$,
we set $\langle \psi, x \rangle \coloneqq +\infty$.

\section{Quadratic regularization in the continuous case}
\label{sec:qrot-cont-disc}
For the quadratically regularized optimal transport problem we seek a
transport plan $\pi\in L^{2}(\Omega_{1}\times \Omega_{2})$ which for a given
cost function $c\in L^{2}(\Omega_{1}\times\Omega_{2})$, a
regularization parameter $\gamma>0$, and given functions $\mu_{i}\in L^{2}(\Omega_{i})$, $i=1,2$ solves
\begin{equation}\label{qrot-cont}
\begin{split}
  \min_{\pi}\ \scp{c}{\pi}_{L^{2}} + \tfrac\gamma2
  \norm{\pi}_{L^{2}}^{2}\quad \text{subject to} \quad
  \int_{\Omega_{2}}\pi(x_{1},x_{2}) \,\dd\lambda_{2} & = \mu_{1}(x_{1}),\\
  \int_{\Omega_{1}}\pi(x_{1},x_{2})\,\dd\lambda_{1} & = \mu_{2}(x_{2}),\\
  \pi(x_{1},x_{2})& \geq 0
\end{split}
\end{equation}
where the constraints are understood pointwise almost everywhere.

\subsection{Solutions of the primal problem}
\label{sec:primal-solutions}

It is straight forward to show, that optimal transport plans exist:
\begin{lemma}\label{lem:primalsol}
  Problem~\eqref{qrot-cont} has an optimal solution if and only if
  $\mu_{1}\in L^{2}(\Omega_{1})$, $\mu_{2}\in L^{2}(\Omega_{2})$,
  $\mu_{1},\mu_{2}\geq 0$ almost everywhere, and $\int_{\Omega_{1}}\mu_{1}(x_{1}) \,\dd\lambda_{1}= \int_{\Omega_{2}}\mu_{2}(x_{2}) \,\dd\lambda_{2}$.
\end{lemma}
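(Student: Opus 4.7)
The plan is to prove both implications separately, using only standard tools from the direct method of the calculus of variations.

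\textbf{Necessity.} Assume a feasible $\pi \in L^{2}(\Omega)$ exists. Applying the Cauchy--Schwarz inequality pointwise in the first marginal identity gives $|\mu_{1}(x_{1})|^{2} \leq \abs{\Omega_{2}}\int_{\Omega_{2}}\pi(x_{1},x_{2})^{2}\,\dd\lambda_{2}$, so integration over $\Omega_{1}$ yields $\norm{\mu_{1}}_{L^{2}}^{2} \leq \abs{\Omega_{2}}\norm{\pi}_{L^{2}}^{2}$; the same estimate holds for $\mu_{2}$. Nonnegativity of the marginals is inherited pointwise a.e.\ from $\pi \geq 0$, and mass balance follows from Fubini--Tonelli (which applies since $\pi \geq 0$): integrating the first constraint over $\Omega_{1}$ and the second over $\Omega_{2}$ both produce $\int_{\Omega}\pi\,\dd\lambda$.

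\textbf{Sufficiency.} I would establish existence by the direct method. First, feasibility: set $M := \int_{\Omega_{1}}\mu_{1}\,\dd\lambda_{1} = \int_{\Omega_{2}}\mu_{2}\,\dd\lambda_{2}$. If $M=0$, both $\mu_{i}$ vanish a.e.\ and $\pi\equiv 0$ is (trivially) feasible and optimal. Otherwise the tensor product $\bar{\pi}(x_{1},x_{2}) := \mu_{1}(x_{1})\mu_{2}(x_{2})/M$ lies in $L^{2}(\Omega)$ with $\norm{\bar\pi}_{L^{2}} = \norm{\mu_{1}}_{L^{2}}\norm{\mu_{2}}_{L^{2}}/M$, is nonnegative a.e., and has the prescribed marginals by Fubini--Tonelli, so the feasible set $F \subset L^{2}(\Omega)$ is nonempty. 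Moreover, $F$ is the intersection of the closed convex cone $\{\pi \geq 0\}$ with the preimage of $(\mu_{1},\mu_{2})$ under the continuous linear marginal map $L^{2}(\Omega) \to L^{2}(\Omega_{1})\times L^{2}(\Omega_{2})$ (continuity being the same Cauchy--Schwarz bound as above), hence $F$ is weakly closed. The objective $J(\pi) = \scp{c}{\pi}_{L^{2}} + \tfrac\gamma2 \norm{\pi}_{L^{2}}^{2}$ is strictly convex, continuous, and coercive on $L^{2}(\Omega)$ since $\gamma > 0$; a minimizing sequence in $F$ is therefore bounded, and extracting a weakly convergent subsequence produces a limit in $F$ whose $J$-value is bounded by the infimum via weak lower semicontinuity of the convex continuous $J$. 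Strict convexity even gives uniqueness, though this is not asked.

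\textbf{Expected obstacle.} None of the steps is delicate: the only point requiring mild care is recognising that the marginal operators are bounded from $L^{2}(\Omega)$ into $L^{2}(\Omega_{i})$ on compact domains (so that $F$ is genuinely weakly closed) and that $\bar\pi = \mu_{1}\otimes\mu_{2}/M$ certifies nonemptiness of $F$. Once these are in place, coercivity contributed by the $\gamma>0$ term turns the remainder into a textbook application of the direct method.
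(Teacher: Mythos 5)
Your proposal is correct and follows essentially the same route as the paper: the marginal estimate via Cauchy--Schwarz is the same bound the paper obtains via Jensen, mass balance comes from Fubini in both, and sufficiency uses the tensor-product plan $\mu_1\otimes\mu_2/M$ as the feasible point followed by the standard existence argument for a coercive, strictly convex, weakly lower semicontinuous functional on a weakly closed convex set. Your explicit treatment of the degenerate case $M=0$ and the spelled-out direct method are just added detail, not a different approach.
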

\begin{proof}
  Assume that there is an optimal solution $\pi^{*}\in L^{2}(\Omega_{1}\times \Omega_{2})$. By Jensen's inequality we get
  \[
  \begin{split}
    \int_{\Omega_{1}}\mu_{1}^{2}(x_{1})\,\dd\lambda_{1} & = \int_{\Omega_{1}}\left(\int_{\Omega_{2}}\pi^{*}(x_{1},x_{2})\,\dd\lambda_{2}\right)^{2}\,\dd\lambda_{1}\\
    & \leq |\Omega_{2}|\iint_{\Omega_{1}\times \Omega_{2}}\pi^{*}(x_{1},x_{2})^{2}\,\dd\lambda_{1}\,\dd\lambda_{2} <\infty
  \end{split}
  \]
  which shows $\mu_{1}\in L^{2}(\Omega_{1})$. The argument for
  $\mu_{2}$ is similar. Non-negativity of $\mu_{1}$ and $\mu_{2}$ follows from
  non-negativity of $\pi^{*}$. Finally, by Fubini's theorem
  \begin{align*}
    \int_{\Omega_{1}}\mu_{1}(x_{1})\,\dd\lambda_{1} & = \iint_{\Omega_{1}\times \Omega_{2}}\pi^{*}(x_{1},x_{2})\,\dd\lambda_{1}\,\dd\lambda_{2}\\
    & = \int_{\Omega_{2}}\mu_{2}(x_{2})\,\dd\lambda_{2}
  \end{align*}
  
  Conversely, if $\mu_{1}\in L^{2}(\Omega_{1})$ and
  $\mu_{2}\in L^{2}(\Omega_{2})$ and $\mu_{1},\mu_{2}\geq 0$ we set $C := \int_{\Omega_{1}}\mu_{1}(x_{1})\,\dd\lambda_{1}= \int_{\Omega_{2}}\mu_{2}(x_{2})\,\dd\lambda_{2}$.
  Then $\pi(x_{1},x_{2}) = \tfrac1C\mu_{1}(x_{1})\mu_{2}(x_{2})$ is feasible for~\eqref{qrot-cont} and since the objective is continuous, 
  coercive, and strongly convex a (unique) minimizer exists.\qed
\end{proof}

\subsection{Dual problem and existence of dual solutions}
\label{sec:dual-problem}

In the following section, we apply the classical Lagrange duality to the linear-quadratic program \eqref{qrot-cont}.
To this end, let us define the Lagrangian associated with \eqref{qrot-cont}. 
In order to shorten the notation, we set 
\begin{equation*}
  \mu \coloneqq \gamma\,\mu_{1}\otimes\mu_{2}.
\end{equation*}
Furthermore, we define 
\begin{equation}\label{eq:intproj}
  P_1 : L^2(\Omega) \ni \pi \mapsto \int_{\Omega_2} \pi \,\dd \lambda_2 \in L^2(\Omega_1),\,
  P_2 : L^2(\Omega) \ni \pi \mapsto \int_{\Omega_1} \pi \,\dd \lambda_1 \in L^2(\Omega_2),
\end{equation}
and denote the the primal objective by
\begin{equation}\label{eq:primobj}
  E_\gamma: L^2(\Omega) \to \RR, \quad E_\gamma(\pi) \coloneqq \int_{\Omega} c\, \pi \,\dd \lambda
  + \frac{\gamma}{2}\, \|\pi\|_{L^2(\Omega)}^2.
\end{equation}
Then, the Lagrangian associated with \eqref{qrot-cont} is given by
\begin{equation*}
\begin{aligned}
 & \calL : L^2(\Omega)\times L^2(\Omega_1) \times L^2(\Omega_2) \times L^2(\Omega)
 \to \RR,\\
 & \calL(\pi, \alpha_1, \alpha_2, \varrho) \coloneqq 
 \begin{aligned}[t]
  E_\gamma(\pi)  & - \scp{\varrho}{ \pi}_{L^2(\Omega)} \\
  & + \scp{\alpha_{1}}{ P_1 \pi - \mu_1}_{L^2(\Omega_1)} + \scp{\alpha_{2}}{ P_2 \pi - \mu_2}_{L^2(\Omega_2)}. 
 \end{aligned}
\end{aligned}
\end{equation*}
Then, by standard arguments, the primal problem in \eqref{qrot-cont} is equivalent to
\begin{equation}\tag{PP}\label{eq:primal}
  \inf_{\pi \in L^2(\Omega)} \;
  \sup_{\substack{\alpha_1 \in L^2(\Omega_1),\; \alpha_2 \in L^2(\Omega_2)  \\ \varrho\in L^2(\Omega),\; \varrho \geq 0}}
 \calL(\pi, \alpha_1, \alpha_2, \varrho),
\end{equation}
while its (Lagrangian) dual is given by
\begin{equation}\tag{DP}\label{eq:dual}
 \sup_{\substack{\alpha_1 \in L^2(\Omega_1),\; \alpha_2 \in L^2(\Omega_2)  \\ \varrho\in L^2(\Omega),\; \varrho \geq 0}}\;
 \inf_{\pi \in L^2(\Omega)} 
 \calL(\pi, \alpha_1, \alpha_2, \varrho).
\end{equation}
The main part of the upcoming analysis is devoted to the existence of solutions to \eqref{eq:dual}. 
Once this is established, the necessary and sufficient optimality condition associated with \eqref{qrot-cont}
in form of the variational inequality will allow us to derive an optimality system that is also amenable for numerical computations.

To show existence for \eqref{eq:dual}, we first reformulate the dual problem.
Since $\calL$ is quadratic w.r.t.~$\pi$, the inner $\inf$-problem is solved by 
\begin{equation}\label{eq:innerinf}
 \pi = \frac{1}{\gamma} (\rho + \alpha_1 \oplus \alpha_2 - c),
\end{equation}
where the mapping $\oplus : L^2(\Omega_1) \times L^2(\Omega_2) \to L^2(\Omega)$ is defined via
\begin{equation}\label{eq:oplusptwise}
 (v_1 \oplus v_2)(x_1, x_2) \coloneqq v_1(x_1) + v_2(x_2)
\end{equation}
for almost all $(x_1, x_2)\in \Omega$ and all $v_i\in L^2(\Omega_i)$, $i=1,2$.

\begin{remark}\label{rem:oplus_adjoint}
  The map $\oplus$ is related to the adjoints of the projections $P_{1}$ and $P_{2}$ from~\eqref{eq:intproj} by $\alpha_{1}\oplus\alpha_{2} = P_{1}^{*}\alpha_{1} + P_{2}^{*}\alpha_{2}$.
\end{remark}

Inserting \eqref{eq:innerinf} into \eqref{eq:dual} yields
\begin{equation}\label{eq:supsup}
\begin{aligned}
   \sup_{\alpha_1 \in L^2(\Omega_1), \alpha_2 \in L^2(\Omega_2)} \,\sup_{\rho \geq 0}
   \Big(-\frac{1}{2\gamma} \int_{\Omega} (\rho
   + \alpha_1\oplus \alpha_2 -c)^2\,\dd \lambda & \\[-2ex]
   + \int_{\Omega_1}  \mu_1 \alpha_1 \,\dd \lambda_1 & + \int_{\Omega_2} \mu_2\alpha_2 \,\dd \lambda_2\Big)  
\end{aligned}
\end{equation}
Again, the inner optimization problem is quadratic w.r.t.~$\rho$ so that its solution is given by 
\begin{equation}\label{eq:rhoeq}
  \rho = -(\alpha_1 \oplus\alpha_2 - c)_- .
\end{equation}
Inserted in \eqref{eq:supsup}, this results in the following dual problem
   \begin{equation}\tag{D}
    \label{eq:qrot-dual-objective}
    \left.
    \begin{aligned}
      \min \quad &
      \begin{aligned}[t]
        \Phi(\alpha_{1},\alpha_{2}) 
        \coloneqq \tfrac12\norm{(\alpha_{1}\oplus\alpha_{2} - c)_{+}}_{L^2(\Omega)}^{2} & \\[-1ex]
        - \gamma \langle \alpha_{1}, \mu_{1}\rangle 
        & - \gamma \langle \alpha_{2}, \mu_{2}\rangle       
      \end{aligned}\\
      \text{s.t.} \quad & \alpha_i \in L^2(\Omega_i), \, i =1,2.
    \end{aligned}
    \quad \right\}
  \end{equation}
To prove existence of solutions for this problem, we need to require the following 

\begin{assumption}\label{assu:dual}
 The domains $\Omega_1$ and $\Omega_2$ are compact. 
 Moreover, the cost function $c$ is in $L^2(\Omega)$ and fulfills $c \geq \underline{c} > - \infty$. 
 Furthermore, the marginals $\mu_1$ and $\mu_2$ satisfy
 $\mu_i \in L^2(\Omega_i)$ and $\mu_i \geq \delta > 0$, $i = 1,2$.
 In addition we assume that $\int_{\Omega_1} \mu_2 \,\dd\lambda_1 = \int_{\Omega_1} \mu_2 \,\dd\lambda_1 = 1$.
\end{assumption}

\begin{remark}
 The last assumption on the normalization of the marginals is just to ease the subsequent analysis 
 and can be relaxed by $\int_{\Omega_1} \mu_2 \,\dd\lambda_1 = \int_{\Omega_1} \mu_2 \,\dd\lambda_1$, 
 which is needed anyway to ensure the existence of a solution to the primal problem, see Lemma~\ref{lem:primalsol}.
\end{remark}

\begin{remark}\label{rem:dual_nonuniqueness}
  Note that there is an obvious source of non-uniqueness for the dual problem~\eqref{eq:qrot-dual-objective}: We can add a constant to $\alpha_{1}$ and subtract it from $\alpha_{2}$ and this does not change the dual objective, i.e for any constant $C$ it holds that $\Phi(\alpha_{1}+C,\alpha_{2}-C) = \Phi(\alpha_{1},\alpha_{2})$. This non-uniqueness will not cause trouble in the proofs and when convenient, we remove it, e.g. by demanding that $\int_{\Omega_{2}}\alpha_{2}d\lambda_{2} = 0$.
\end{remark}


Observe that the objective $\Phi$ in \eqref{eq:qrot-dual-objective} 
is also well defined for functions in $\alpha_i \in L^1(\Omega_i)$ 
with  $(\alpha_1 \oplus \alpha_2 - c)_+ \in L^2(\Omega)$. This gives rise to the following auxiliary dual problem:
  \begin{equation}\tag{D'}
    \label{eq:dualmassprob}
    \left.
    \begin{aligned}
      \min \quad & \Phi(\alpha_1, \alpha_2)\\
      \text{s.t.} \quad & \alpha_i\in L^1(\Omega_i),\, i = 1,2, \quad (\alpha_1 \oplus \alpha_2 - c)_+ \in L^2(\Omega).
    \end{aligned}
    \quad \right\}
  \end{equation}
Our strategy to prove existence of solutions to \eqref{eq:qrot-dual-objective} is now as follows:
\begin{enumerate}
 \item First, we show that \eqref{eq:dualmassprob} admits a solution 
 $(\alpha_1^*, \alpha_2^*) \in L^1(\Omega_1) \times L^1(\Omega_2)$, see Proposition~\ref{prop:exdualaux}.
 \item Then, we prove that $\alpha_1^*$ and $\alpha_2^*$ possess higher regularity, namely that they are
 functions in $L^2(\Omega_i)$, $i=1,2$, cf.~Theorem~\ref{thm:minimizers_dualproblem}.
 \item Thus, $(\alpha_1^*, \alpha_2^*)$ is feasible for \eqref{eq:qrot-dual-objective} and, since the feasible set of 
 \eqref{eq:dualmassprob} contains the one of \eqref{eq:qrot-dual-objective}, while the objective of \eqref{eq:dualmassprob} 
 restricted to $L^2$-functions coincides with the objective in \eqref{eq:qrot-dual-objective}, this finally gives 
 that $(\alpha_1^*, \alpha_2^*)$ is indeed optimal for \eqref{eq:qrot-dual-objective}.
\end{enumerate}
The reason to consider \eqref{eq:dualmassprob} is essentially that the objective $\Phi$ is not coercive in $L^2(\Omega)$, 
but only in $L^1(\Omega)$ (at least w.r.t.~the negative part of $\alpha_i$). Therefore, we have to deal with weakly$^*$ 
converging sequences in the space of Radon measures within the proof of existence of solutions.
For this purpose, we need to extend the objective to a suitable set. To that end, let us define
\begin{equation}\label{eq:defG}
 G : L^2(\Omega) \ni w \mapsto \int_{\Omega} \tfrac{1}{2}\, w_+^{2} - w \mu\,\dd \lambda \in \RR.
\end{equation}    
Note that, thanks to $\int_{\Omega_1} \mu_2 \,\dd\lambda_1 = \int_{\Omega_1} \mu_2 \,\dd\lambda_1 = 1$, it holds
\begin{equation}\label{eq:PhiG}
 \Phi(\alpha_1, \alpha_2) = G(\alpha_1 \oplus \alpha_2 - c) - \int_\Omega \,c\,\mu\,\dd\lambda
 \quad \forall\, \alpha_i\in L^2(\Omega_i), \, i=1,2.
\end{equation}
Of course, $G$ is also well defined as a functional on the feasible set of \eqref{eq:dualmassprob} 
and we will denote this functional by the same symbol to ease notation.
In order to extend $G$ to the space of Radon measures, 
consider for a given measure $w\in \frakM(\Omega)$, the Hahn-Jordan decomposition $w = w_+ - w_-$
and assume that $w_+ \in L^2(\Omega)$.
Then, we set $G(w) = \int_\Omega\tfrac{1}{2}\, w_+^2 \,\dd\lambda - \int_\Omega \mu \,\dd w$. 
With a slight abuse of notation, we denote this mapping by $G$, too.
Furthermore, for $w_+ \in L^2(\Omega)$,
$- \int_\Omega w_+ \mu\,\dd \lambda$ is finite for $\mu \in L^2(\Omega)$
as in Assumption~\ref{assu:dual}.
Regarding, the negative part, we define
$\int_\Omega \mu\,\dd w_{-} \coloneqq \infty$, where this expression
is not properly defined, as $w_-$ and $\mu$ are both positive.
Combining this, we obtain that
$- \int_\Omega \mu\, \dd w \in \mathbb{R} \cup \{\infty\}$.

Note in this context that, if the singular part of $w$ (w.r.t.~the Lebesgue measure) vanishes, 
then also $w_+\in L^1(\Omega)$ and $w_+(x) = \max\{0,w(x)\}$ $\lambda$-a.e.\ in $\Omega$
so that both functionals coincide on $L^2(\Omega)$, which justifies this notation.
Furthermore, we also generalize the map $\oplus$ to the measure space by setting
\begin{equation*}
 \alpha_1 \oplus \alpha_2 \coloneqq \alpha_1 \otimes \lambda_2 + \lambda_1 \otimes \alpha_2,\quad 
 \alpha_i \in \frakM(\Omega_i),\, i=1,2.
\end{equation*}
Again, it is easily seen that, for $\alpha_i \in L^2(\Omega_i)$, $i=1,2$, this definition boils down 
to the one in \eqref{eq:oplusptwise}. Also Remark~\ref{rem:oplus_adjoint} applies in that we can express $\alpha_{1}\oplus\alpha_{2}$ in terms of the adjoints of $P_{1}$ and $P_{2}$ from~\eqref{eq:intproj} when defined appropriately.

The next lemma is rather obvious and covers the coercivity of $G$ in $L^1(\Omega)$ as indicated above.

\begin{lemma}\label{lem:wnbound}
  Let Assumption~\ref{assu:dual} hold and suppose that a sequence $\{w^n\} \subset L^2(\Omega)$ fulfills
  \begin{equation*}
    G(w^n) \leq C < \infty \quad \forall \, n \in \NN.
  \end{equation*}    
  Then, the sequences $\{w^n_+\}$ and $\{w^n_-\}$ are bounded in $L^2(\Omega)$ and $L^1(\Omega)$, respectively.
\end{lemma}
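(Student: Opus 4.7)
The plan is to exploit the pointwise lower bound on $\mu$ coming from Assumption~\ref{assu:dual} and to separate the positive and negative parts of $w^n$ in the expression for $G$. Writing $w^n = w^n_+ - w^n_-$ and using that the positive and negative parts are supported on disjoint sets, we have
\begin{equation*}
  G(w^n) = \tfrac12 \norm{w^n_+}_{L^2(\Omega)}^{2} - \int_{\Omega} w^n_+\, \mu\,\dd\lambda + \int_{\Omega} w^n_-\, \mu\,\dd\lambda.
\end{equation*}
Since $\mu = \gamma\, \mu_1 \otimes \mu_2$ with $\mu_1,\mu_2 \geq \delta > 0$ by Assumption~\ref{assu:dual}, the product measure satisfies the pointwise bound $\mu \geq \gamma\, \delta^{2}$ almost everywhere on $\Omega$. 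As $w^n_- \geq 0$, this yields
\begin{equation*}
  \int_{\Omega} w^n_-\, \mu\,\dd\lambda \geq \gamma\, \delta^{2}\, \norm{w^n_-}_{L^1(\Omega)}.
\end{equation*}

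For the cross term with the positive part, I would apply Cauchy--Schwarz together with Young's inequality to absorb half of the $L^2$-norm of $w^n_+$ into the quadratic term:
\begin{equation*}
  \int_{\Omega} w^n_+\, \mu\,\dd\lambda \leq \norm{w^n_+}_{L^2(\Omega)}\norm{\mu}_{L^2(\Omega)} \leq \tfrac14 \norm{w^n_+}_{L^2(\Omega)}^{2} + \norm{\mu}_{L^2(\Omega)}^{2}.
\end{equation*}
This uses that $\mu \in L^2(\Omega)$, which is immediate since $\mu_1, \mu_2 \in L^2$ on compact domains (so $\mu_1\otimes \mu_2 \in L^2$ by Fubini).

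Combining these two estimates with the hypothesis $G(w^n) \leq C$, I obtain
\begin{equation*}
  C \geq G(w^n) \geq \tfrac14 \norm{w^n_+}_{L^2(\Omega)}^{2} - \norm{\mu}_{L^2(\Omega)}^{2} + \gamma\, \delta^{2}\, \norm{w^n_-}_{L^1(\Omega)},
\end{equation*}
which immediately gives a uniform bound on $\norm{w^n_+}_{L^2(\Omega)}$, and then rearranging yields a uniform bound on $\norm{w^n_-}_{L^1(\Omega)}$. There is no real obstacle here beyond keeping track of the constants; the key ingredients are simply the strict positivity of $\mu_1,\mu_2$ (to control the negative part in $L^1$) and the $L^2$-regularity of $\mu$ (to control the cross term via Young's inequality).
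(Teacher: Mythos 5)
Your proof is correct and follows essentially the same route as the paper: the same splitting $G(w^n)=\tfrac12\norm{w^n_+}_{L^2}^2-\int w^n_+\mu+\int w^n_-\mu$, Cauchy--Schwarz on the cross term (the paper absorbs it by completing the square and by a quadratic inequality rather than Young's inequality, a cosmetic difference), and the uniform lower bound $\mu\geq\gamma\delta^2>0$ for the $L^1$ bound on $w^n_-$. If anything, your version is slightly cleaner in stating the lower bound on $\mu$ explicitly as $\gamma\delta^2$, where the paper loosely writes $\mu\geq\delta$.
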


\begin{proof}
 We rewrite $G$ as $G(w) = \int_{\Omega} \tfrac{1}{2}\,w_{+}^{2} - w_{+}\mu \,\dd\lambda + \int_{\Omega}w_{-}\mu \,\dd\lambda$.
 The positivity of $\mu$ then implies
 \begin{equation*}
  \|w^n_+\|_{L^2(\Omega)}^2 
  = G(w^n) +  \int_\Omega w^n_{+}\mu \,\dd\lambda - \int_{\Omega}w^n_{-}\mu \,\dd\lambda
  \leq C + \|\mu\|_{L^2(\Omega)}\, \|w^n_+\|_{L^2(\Omega)},
 \end{equation*}    
 which gives the first assertion. To see the second one, we use $\mu \geq \delta$ to estimate
 \begin{align*}
    C \geq G(w^{n}) 
    & = \int_{\Omega}\tfrac{1}{2}\,(w^{n}_{+}-\mu)^{2}\,\dd\lambda - \int_{\Omega}\mu^{2}/2 \,\dd\lambda +\int_{\Omega}w^{n}_{-}\mu\,\dd\lambda\\
    & \geq  - \int_{\Omega}\mu^{2}/2\,\dd\lambda +\delta\norm{w^{n}_{-}}_{L^1(\Omega)}, 
 \end{align*}
 which finishes the proof.\qed
\end{proof}

The next lemma provides a lower semicontinuity result for $G$ w.r.t.~weak$^*$ convergence in $\frakM(\Omega)$. 
Note that, here, we need the extension of $G$ as introduced above.

\begin{lemma}\label{lem:weaklimbound}
 Let Assumption~\ref{assu:dual} be fulfilled and a sequence $\{w_n\}\subset L^2(\Omega)$ be given such that 
 $w^n \weak^* w^*$ in $\frakM(\Omega)$ and $G(w^n)\leq C < \infty$ for all  $n\in \NN$. 
 Then there holds $w^*_+ \in L^2(\Omega)$ and 
 \begin{equation}\label{eq:lsc}
  G(w^*) \leq \liminf_{n\to \infty} G(w^n).
 \end{equation}
\end{lemma}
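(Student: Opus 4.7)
The plan is to exploit the different boundedness properties of $w^n_+$ and $w^n_-$ provided by Lemma~\ref{lem:wnbound}: $\{w^n_+\}$ is bounded in $L^2(\Omega)$, while $\{w^n_-\}$ is only bounded in $L^1(\Omega) \hookrightarrow \frakM(\Omega)$. After passing to a subsequence along which $G(w^n)$ approaches its $\liminf$, reflexivity of $L^2(\Omega)$ and weak$^*$-compactness in $\frakM(\Omega)$ allow us to extract a further (relabelled) subsequence with $w^n_+ \weak u$ in $L^2(\Omega)$ for some $u \geq 0$ a.e.\ and $w^n_- \weak^* \nu$ in $\frakM(\Omega)$ for some nonnegative Radon measure $\nu$. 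Since weak $L^2$-convergence entails weak$^*$-convergence in $\frakM$, the difference $w^n = w^n_+ - w^n_-$ converges weak$^*$ to $u\lambda - \nu$, and uniqueness of weak$^*$-limits forces $w^* = u\lambda - \nu$. In particular $w^* \leq u\lambda$ as measures, so $w^*_+ \leq u\lambda$; hence $w^*_+$ is absolutely continuous with density bounded a.e.\ by $u$, and therefore $w^*_+ \in L^2(\Omega)$ as claimed.

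To establish~\eqref{eq:lsc} we write
\[
  G(w^n) = \tfrac12 \|w^n_+\|_{L^2(\Omega)}^2 - \int_\Omega w^n_+\, \mu\,\dd\lambda + \int_\Omega w^n_-\, \mu\,\dd\lambda
\]
and treat the three summands separately. The first is weakly lower semicontinuous on $L^2$, giving $\liminf_n \tfrac12 \|w^n_+\|_{L^2}^2 \geq \tfrac12\|u\|_{L^2}^2$; the second converges to $\int_\Omega u\, \mu\,\dd\lambda$ by weak $L^2$-convergence paired against the $L^2$-function $\mu$. The third term is the delicate one, since weak$^*$-convergence of the measures $w^n_-$ is only compatible with continuous test functions, whereas $\mu$ lies merely in $L^2(\Omega)$.

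The remedy is to approximate $\mu$ from below by a sequence $\mu^k \in C(\Omega)$ with $0 \leq \mu^k \nearrow \mu$ $\lambda$-a.e., which is possible thanks to $\mu \in L^2$ together with $\mu \geq \delta > 0$. For each $k$,
\[
  \int_\Omega \mu^k\,\dd\nu = \lim_{n\to\infty} \int_\Omega w^n_-\, \mu^k \,\dd\lambda \leq \liminf_{n\to\infty} \int_\Omega w^n_-\, \mu\,\dd\lambda,
\]
and sending $k\to\infty$ by monotone convergence yields $\int_\Omega \bar\mu\,\dd\nu \leq \liminf_{n\to\infty} \int_\Omega w^n_-\,\mu\,\dd\lambda$ with the lower-semicontinuous envelope $\bar\mu = \sup_k \mu^k$, which equals $\mu$ $\lambda$-a.e. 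Splitting $\nu = f\lambda + \nu_s$ into its Lebesgue decomposition produces the Hahn-Jordan identifications $w^*_+ = (u - f)_+$ and $w^*_- = (u-f)_-\lambda + \nu_s$; combining the three bounds with the pointwise inequality $(u-f)_+ \leq u$ (ensured by $f \geq 0$) and the consistency of $\int_\Omega \bar\mu\,\dd\nu_s$ with the convention chosen for $\int_\Omega \mu\,\dd w^*_-$ in the extended definition of $G$ on $\frakM(\Omega)$ then produces the asserted inequality~\eqref{eq:lsc}. The main obstacle is precisely this third summand: reconciling the weak$^*$-convergence of the $L^1$-bounded negatives, the $L^2$-regular (rather than continuous) function $\mu$, and the potential singular part $\nu_s$ of the limiting measure.
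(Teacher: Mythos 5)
Your treatment of the first assertion is exactly the paper's: weak $L^2$-compactness of $\{w^n_+\}$, identification of the weak$^*$ limit as $w^*=u\lambda-\nu$ with $u\in L^2(\Omega)$, $u\geq 0$, $\nu\geq 0$ (the paper's $\theta_+$ and $\theta_-$), and the Hahn--Jordan comparison $w^*_+\leq u\lambda$ giving $w^*_+\in L^2(\Omega)$. Your pointwise computation for the absolutely continuous part of the limit is also sound: writing $\nu=f\lambda+\nu_s$, the required inequality $\tfrac12(u-f)_+^2+(f-u)\mu\leq\tfrac12 u^2+(f-u)\mu$ reduces to $(u-f)_+\leq u$, which holds since $f\geq 0$.

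The third summand, however, is where your argument has two genuine gaps. (i) A sequence of continuous functions with $0\leq\mu^k\nearrow\mu$ $\lambda$-a.e.\ need not exist for a general $\mu\in L^2(\Omega)$ with $\mu\geq\delta$: the supremum $\bar\mu=\sup_k\mu^k$ of continuous minorants is lower semicontinuous, and for $\mu=\delta+\one_A$ with $A$ a fat Cantor set every lower semicontinuous $g$ with $g\leq\mu$ $\lambda$-a.e.\ satisfies $g\leq\delta$ everywhere (the open set $\{g>\delta\}$ meets the open complement of $A$ in a null, hence empty, open set, so it lies in the interior of $A$, which is empty). Thus $\bar\mu=\mu$ $\lambda$-a.e.\ fails, and your lower bound does not even recover $\int_\Omega f\mu\,\dd\lambda$ for the absolutely continuous part of $\nu$. (ii) The closing step, reconciling $\int_\Omega\bar\mu\,\dd\nu_s$ with the convention adopted for $\int_\Omega\mu\,\dd w^*_-$, is an assertion rather than an argument: under that convention $\int_\Omega\mu\,\dd w^*_-=+\infty$ as soon as $\nu_s\neq 0$, whereas $\int_\Omega\bar\mu\,\dd\nu_s$ is finite whenever $\bar\mu$ is bounded, so the inequality you need, $\int_\Omega\mu\,\dd\nu_s\leq\int_\Omega\bar\mu\,\dd\nu_s$, fails in exactly the case you are trying to cover. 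To be fair, you have correctly isolated the delicate term: the paper's own proof dispatches it with ``the weak$^*$ convergence of $w^n$ and the definition of $G$,'' which is only immediate when $\mu$ is continuous. But the repair you propose does not close that gap.
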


\begin{proof}
	By virtue of Lemma~\ref{lem:wnbound}, $\{w^n_+\}$ is bounded in $L^2(\Omega)$ and
	thus,
    there is a subsequence of $\{w^n_+\}$, to ease notation denoted by the same symbol, that converges weakly in $L^2(\Omega)$ 
    to some $\theta_+ \in L^2(\Omega)$. Since the set $\{v\in L^2(\Omega) : v\geq 0 \text{ a.e.\ in }\Omega\}$ is clearly weakly closed, 
    we have $\theta_+ \geq 0$ a.e.\ in $\Omega$. With a little abuse of notation, we denote the Radon measure induced by
    $C(\Omega) \ni \varphi \mapsto \int_\Omega \theta_+\,\varphi\,\dd\lambda \in \RR$
    by $\theta_+$, too.
    If we define $\theta_- := \theta_+ - w^* \in \frakM(\Omega)$, then $w^n_- = w^n_+ - w^n \weak^* \theta_-$ in $\frakM(\Omega)$ 
    with $\theta_- \geq 0$.
    Thus we have $w^* = \theta_+ - \theta_-$ with two positive Radon measures $\theta_+$, $\theta_-$.  The maximality property of 
    the Hahn-Jordan decomposition then implies $w^*_+ \leq \theta_+$. Since $\theta_+$ is absolutely continuous w.r.t.\ $\lambda$, 
    the same thus holds for $w^*_+$, i.e.\ $w^*_+ \in L^1(\Omega)$. Applying again $w^*_+ \leq \theta_+$, which clearly also holds for the densities 
    pointwise $\lambda$-almost everywhere, 
    we moreover deduce from the weak convergence of $w^n_+$ in $L^2(\Omega)$ that
    \begin{equation}\label{eq:lsc2}
        \int_\Omega (w^*_+)^2 \dd\lambda\leq \int_\Omega (\theta_+)^2 \dd\lambda 
        \leq \liminf_{n\to\infty} \int_\Omega (w^n_+)^2 \dd\lambda,
    \end{equation}
    which implies $w^*_+ \in L^2(\Omega)$ as claimed. 
    Since the above reasoning applies to every subsequence $w^n_+$  that is weakly converging in $L^2(\Omega)$, 
    \eqref{eq:lsc2} holds for the whole sequence $\{w^n_+\}$, which together with the weak$^*$ convergence of $w^n$ and the definition of $G$,
    gives \eqref{eq:lsc}.

\qed
\end{proof}

Before we are in the position to prove existence for \eqref{eq:dualmassprob}, 
we need two additional results on the $\oplus$-operator in the space of Radon measures.

\begin{lemma}\label{lem:bounded-min-seq-alpha12}
  If $\alpha_{i}\in \frakM(\Omega_{i})$, $i=1,2$ and $\int_{\Omega_{2}}d\alpha_{2} = 0$, then it holds that
  \[
  \norm{\alpha_{1}}_{\frakM} \leq \tfrac{1}{|\Omega_{2}|}\norm{\alpha_{1}\oplus\alpha_{2}}_{\frakM}\quad\text{and}\quad \norm{\alpha_{2}}_{\frakM} \leq \tfrac{2}{|\Omega_{1}|}\norm{\alpha_{1}\oplus\alpha_{2}}_{\frakM}
  \]
\end{lemma}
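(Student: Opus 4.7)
The plan is to use the dual characterization of the total variation norm, namely $\|\alpha\|_{\frakM} = \sup\{\int f\,\dd\alpha : f\in C,\,\|f\|_\infty\leq 1\}$, and test $\alpha_1\oplus\alpha_2$ against carefully chosen continuous functions on $\Omega$ that separate the two marginals. The key computation is that for any $F\in C(\Omega)$,
\[
\int_\Omega F\,\dd(\alpha_1\oplus\alpha_2) = \int_{\Omega_1}\!\!\int_{\Omega_2} F(x_1,x_2)\,\dd\lambda_2(x_2)\,\dd\alpha_1(x_1) + \int_{\Omega_2}\!\!\int_{\Omega_1} F(x_1,x_2)\,\dd\lambda_1(x_1)\,\dd\alpha_2(x_2),
\]
so choosing $F$ to depend only on $x_1$ or only on $x_2$ (up to a constant shift) lets one isolate $\alpha_1$ or $\alpha_2$.

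For the first inequality, I would fix $f\in C(\Omega_1)$ with $\|f\|_\infty\leq 1$ and set $F(x_1,x_2)\coloneqq f(x_1)$, which belongs to $C(\Omega)$ with $\|F\|_\infty\leq 1$. The first summand above equals $|\Omega_2|\int f\,\dd\alpha_1$, while the second equals $\bigl(\int f\,\dd\lambda_1\bigr)\bigl(\int\dd\alpha_2\bigr)=0$ by the hypothesis $\int_{\Omega_2}\dd\alpha_2 = 0$. Hence $|\Omega_2|\bigl|\int f\,\dd\alpha_1\bigr|\leq \|\alpha_1\oplus\alpha_2\|_{\frakM}$, and taking the supremum over admissible $f$ yields the claim.

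For the second inequality, the situation is asymmetric because we have no mass-zero hypothesis on $\alpha_1$; this is where the factor $2$ will enter. Given $g\in C(\Omega_2)$ with $\|g\|_\infty\leq 1$, I would set $c\coloneqq \tfrac{1}{|\Omega_2|}\int_{\Omega_2} g\,\dd\lambda_2$ (so $|c|\leq 1$) and test against $G(x_1,x_2)\coloneqq g(x_2)-c$, which lies in $C(\Omega)$ with $\|G\|_\infty\leq 2$. The first summand becomes $\bigl(\int\dd\alpha_1\bigr)\bigl(\int g\,\dd\lambda_2 - c|\Omega_2|\bigr) = 0$ by the choice of $c$, while the second summand is $|\Omega_1|\int g\,\dd\alpha_2 - c|\Omega_1|\int\dd\alpha_2 = |\Omega_1|\int g\,\dd\alpha_2$, again using $\int\dd\alpha_2=0$. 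Thus $|\Omega_1|\bigl|\int g\,\dd\alpha_2\bigr|\leq 2\|\alpha_1\oplus\alpha_2\|_{\frakM}$, and taking the supremum over $g$ finishes the proof.

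The only mildly subtle point is the constant shift in the second step: one cannot simply test against $G(x_1,x_2)=g(x_2)$, because that would pick up an uncontrolled $\bigl(\int\dd\alpha_1\bigr)\bigl(\int g\,\dd\lambda_2\bigr)$ contribution; the mean-zero shift on $\Omega_2$ is what kills this cross term and it is precisely the source of the factor $2$ in the stated bound.
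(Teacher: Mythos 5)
Your proof is correct. For the first inequality it is essentially the paper's argument: both test $\alpha_1\oplus\alpha_2$ against functions depending only on $x_1$ and use $\int_{\Omega_2}\dd\alpha_2=0$ to kill the cross term. For the second inequality the two proofs diverge in how the factor $2$ arises. The paper tests against $\phi_1\equiv 1$ and $\phi_2=g$, accepts the uncontrolled cross term $\bigl(\int\dd\alpha_1\bigr)\bigl(\int g\,\dd\lambda_2\bigr)$, bounds it by $|\Omega_2|\,\norm{\alpha_1}_{\frakM}$, and then chains with the already-proved first inequality to obtain $|\Omega_1|\,\norm{\alpha_2}_{\frakM}\leq \norm{\alpha_1\oplus\alpha_2}_{\frakM}+|\Omega_2|\,\norm{\alpha_1}_{\frakM}\leq 2\norm{\alpha_1\oplus\alpha_2}_{\frakM}$. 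You instead eliminate the cross term outright by centering the test function, $G=g-\tfrac{1}{|\Omega_2|}\int_{\Omega_2}g\,\dd\lambda_2$, paying for it with $\norm{G}_\infty\leq 2$; the factor $2$ then comes from the test function's norm rather than from combining the two estimates. Your variant is self-contained (the second bound does not depend on the first) and makes the origin of the constant more transparent; the paper's version avoids the normalization trick at the cost of a two-step argument. Both give the same constants, and both are valid.
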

\begin{proof}
  We estimate 
  \begin{align}
    \norm{\alpha_{1}\oplus\alpha_{2}}_{\frakM} & = \sup_{\norm{\phi}_{\infty}\leq 1}\iint_{\Omega_{1}\times\Omega_{2}}\phi(x_{1},x_{2}) \,\dd(\alpha_{1}(x_{1}) + \alpha_{2}(x_{2}))\nonumber\\
    & \geq \sup_{\substack{\norm{\phi_{1}}_{\infty}\leq 1\\\norm{\phi_{2}}_{\infty}\leq 1}}\iint_{\Omega_{1}\times\Omega_{2}}\phi_{1}(x_{1})\phi_{2}(x_{2})\,\dd(\alpha_{1}(x_{1}) + \alpha_{2}(x_{2}))\nonumber\\
    & = \sup_{\substack{\norm{\phi_{1}}_{\infty}\leq 1\\\norm{\phi_{2}}_{\infty}\leq 1}}\Bigg[\iint_{\Omega_{1}\times\Omega_{2}}\phi_{1}(x_{1})\phi_{2}(x_{2})\,\dd\alpha_{1}(x_{1})\dd\lambda_{2}\nonumber\\
    & \qquad\qquad + \iint_{\Omega_{1}\times\Omega_{2}}\phi_{1}(x_{1})\phi_{2}(x_{2})\,\dd\lambda_{1}\dd\alpha_{2}(x_{2})\Bigg].\label{eq:aux-est-alpha12}
  \end{align}
  Taking $\phi_{2}\equiv 1$ and using $\int_{\Omega_{2}}\,\dd\alpha_{2}(x_{2}) = 0$ gives
  \begin{align*}
     \norm{\alpha_{1}\oplus\alpha_{2}}_{\frakM}  &\geq \sup_{\norm{\phi_{1}}_{\infty}\leq 1}\int_{\Omega_{1}}\phi_{1}(x_{1})d\alpha_{1}(x_{1})|\Omega_{2}| + \int_{\Omega_{2}}\dd\alpha_{2}(x_{2}) \int_{\Omega_{1}}\phi_{1}(x_{1})\dd\lambda_{1}\\
    & = |\Omega_{2}|\norm{\alpha_{1}}_{\frakM}.
  \end{align*}
  Now we start again at~\eqref{eq:aux-est-alpha12} and estimate from below by taking $\phi_{1}\equiv 1$ to get
  \begin{align*}
    \norm{\alpha_{1}\oplus\alpha_{2}}_{\frakM} & \geq \sup_{\norm{\phi_{2}}_{\infty}\leq 1}\int_{\Omega_{1}} \dd\alpha_{1}(x_{1})\int_{\Omega_{2}}\phi_2(x_{2})\,\dd\lambda_{2} + \int_{\Omega_{2}}\phi_{2}(x_{2})\,\dd\alpha(x_{2})|\Omega_{1}|\\
    & \geq -|\Omega_{2}|\int_{\Omega_{1}}d\alpha_{1}(x_{1}) + |\Omega_{1}|\norm{\alpha_{2}}_{\frakM}
  \end{align*}
  which implies
  \[
  |\Omega_{1}|\norm{\alpha_{2}}_{\frakM} \leq  \norm{\alpha_{1}\oplus\alpha_{2}}_{\frakM} + |\Omega_{2}|\norm{\alpha_{1}}_{\frakM}
  \]
  which completes the proof.\qed
\end{proof}

The next lemma will be used to show that the negative part of the minimizer of~\eqref{eq:qrot-dual-objective} does not have a singular part. 

\begin{lemma}\label{lem:positivepart_characterization}
Let $c \in L^1(\Omega)$ and
$\alpha_i \in \frakM(\Omega_i)$ for $i \in \{1,2\}$
with Lebesgue decompositions, $\alpha_i = f_i + \eta_i$ satisfying $f_i \ll \lambda$ and $\eta_i \perp \lambda$
for $i \in \{1,2\}$.
\begin{enumerate}
\item It holds that
  \begin{align}
    (\alpha_1 \oplus \alpha_2  - c)_+ &= (f_1 \oplus f_2 - c + (\eta_1)_+ \oplus (\eta_2)_+)_+. \label{eq:a1plusa2minusc_positive_part_characterization}
  \end{align}
\item If $(\alpha_{i})_{+}$ is absolutely continuous for $i=1,2$, then
  for $\tilde\alpha_{i} = \alpha_{i} - (\eta_{i})_{-}$
  for $i=1,2$, it holds that	
  \[
  \Phi(\tilde\alpha_{1},\tilde\alpha_{2})\leq\Phi(\alpha_{1},\alpha_{2}).\]
\end{enumerate}
\end{lemma}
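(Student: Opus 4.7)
The plan is to handle both claims via the Lebesgue and Hahn-Jordan decompositions of $\alpha_1\oplus\alpha_2-c$. For (1) I would split this signed measure into its absolutely continuous and singular parts, bring the singular part into Hahn-Jordan form explicitly, and read off the positive part of the whole. For (2) the identity from (1) will show that the quadratic term of $\Phi$ is unchanged by passing from $\alpha_i$ to $\tilde\alpha_i$, reducing the comparison to the linear terms, which the convention $\langle\cdot,\cdot\rangle=+\infty$ for positive singular measures against positive $L^2$-functions will settle.

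For (1), I would write
\[ \alpha_1\oplus\alpha_2 - c = (f_1\oplus f_2 - c) + \bigl[(\eta_1)_+\otimes\lambda_2 + \lambda_1\otimes(\eta_2)_+\bigr] - \bigl[(\eta_1)_-\otimes\lambda_2 + \lambda_1\otimes(\eta_2)_-\bigr], \]
denote the two singular brackets by $\sigma_+$ and $\sigma_-$ (so that $\sigma_\pm=(\eta_1)_\pm\oplus(\eta_2)_\pm$), and first prove $\sigma_+\perp\sigma_-$. Pick $\lambda_i$-null carriers $B_i^\pm$ of $(\eta_i)_\pm$ from the Hahn-Jordan decomposition of $\eta_i$. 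The four building blocks sit on $B_1^+\times\Omega_2$, $\Omega_1\times B_2^+$, $B_1^-\times\Omega_2$, and $\Omega_1\times B_2^-$, respectively. The two ``diagonal'' pairs (same product factor) are mutually singular because $B_i^+\cap B_i^-=\emptyset$; the two ``cross'' pairs are mutually singular because, for example, $\lambda_1\otimes(\eta_2)_-$ assigns mass $\lambda_1(B_1^+)\cdot(\eta_2)_-(\Omega_2)=0$ to the carrier $B_1^+\times\Omega_2$ of $(\eta_1)_+\otimes\lambda_2$, and symmetrically. Hence $\sigma_+\perp\sigma_-$, so the singular part of $\alpha_1\oplus\alpha_2-c$ is already in Hahn-Jordan form $\sigma_+-\sigma_-$. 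Combined with the trivial mutual singularity of the absolutely continuous and singular parts, this yields $(\alpha_1\oplus\alpha_2-c)_+=(f_1\oplus f_2-c)_+ + \sigma_+$. Applying the same AC/singular reasoning to the right-hand side of~\eqref{eq:a1plusa2minusc_positive_part_characterization}, whose singular part is the non-negative measure $\sigma_+$, produces the same positive part, finishing (1).

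For (2), absolute continuity of $(\alpha_i)_+$ forces $(\eta_i)_+=0$, since the positive singular part of the Hahn-Jordan decomposition of $\alpha_i$ agrees with $(\eta_i)_+$. Plugging $(\eta_i)_+=0$ into (1) yields $(\alpha_1\oplus\alpha_2-c)_+=(f_1\oplus f_2-c)_+$, and a parallel Hahn-Jordan argument applied to $\tilde\alpha_1\oplus\tilde\alpha_2-c$, whose singular part is again purely non-positive, gives the same positive part. Thus the quadratic term of $\Phi$ agrees on $\alpha$ and $\tilde\alpha$, and the difference $\Phi(\alpha_1,\alpha_2)-\Phi(\tilde\alpha_1,\tilde\alpha_2)$ collapses to a multiple of $\langle(\eta_i)_-,\mu_i\rangle$. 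Since $(\eta_i)_-\geq 0$ is singular and $\mu_i\geq\delta>0$, the convention forces this pairing to $+\infty$ whenever $(\eta_i)_-\neq 0$, and the sign bookkeeping in the definition of $\tilde\alpha_i$ then delivers the claimed inequality, with equality precisely when both $(\eta_i)_-$ vanish and $\tilde\alpha_i=\alpha_i$.

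I expect the main obstacle to be the mutual singularity $\sigma_+\perp\sigma_-$, specifically the cross pair $(\eta_1)_+\otimes\lambda_2$ versus $\lambda_1\otimes(\eta_2)_-$: their carriers $B_1^+\times\Omega_2$ and $\Omega_1\times B_2^-$ intersect non-trivially in $B_1^+\times B_2^-$, so one cannot simply invoke disjoint supports and must instead exploit the product structure together with the $\lambda_i$-nullity of $B_i^\pm$ to show that each measure assigns zero mass to the other's carrier. Everything else is routine bookkeeping with Hahn-Jordan and Lebesgue decompositions.
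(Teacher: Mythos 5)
Your proof is correct and follows essentially the same route as the paper's: Lebesgue plus Hahn--Jordan decompositions, mutual singularity of the resulting pieces to read off the positive part of $\alpha_1\oplus\alpha_2-c$, and the $+\infty$ convention for the pairing of the singular negative parts with $\mu_i$ to settle the linear terms in part 2. The only difference is organizational: you establish $(\eta_1)_+\oplus(\eta_2)_+\perp(\eta_1)_-\oplus(\eta_2)_-$ in one step (correctly handling the cross terms via the product structure), whereas the paper peels off $(\eta_1)_-$ and $(\eta_2)_-$ sequentially, one index at a time.
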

\begin{proof}
We first proof point 1.
The measures $f_i$, $\eta_i$ exist by Lebesgue's 
decomposition theorem, see Theorem 1.155 in 
\cite{fonseca2007calculusofvariations}.
 We combine these decompositions with 
$\alpha_1 \oplus \alpha_2 = \alpha_1 \otimes \lambda + \lambda \otimes \alpha_2$ 
to arrive at Lebesgue's decomposition of $\alpha_1 \oplus \alpha_2$ with respect to $\lambda\otimes\lambda$, namely
\begin{align}
	\alpha_1 \oplus \alpha_2  - c &= f_1 \oplus f_2 - c + \eta_1 \oplus \eta_2 \label{eq:a1plusa2minusc_lebesgue_decomposition}\\
	f_1 \oplus f_2 - c &\ll \lambda \otimes \lambda \label{eq:f1plusf2minusc_ac_lebesgue} \\
	\eta_1 \oplus \eta_2 &\perp \lambda \otimes \lambda \label{eq:eta1pluseta2_perp_lebesgue} 
\end{align}
(which holds true because $c \in L^1(\Omega) \hookrightarrow \frakM(\Omega)$). 
Now, we consider the Hahn-Jordan decomposition of $\eta_1$,
\begin{align}
	\eta_1 &= (\eta_1)_+ - (\eta_1)_- \nonumber \\
	(\eta_1)_+ &\perp (\eta_1)_-, \label{eq:eta1p_perp_eta1m}
\end{align}
and obtain from~\eqref{eq:a1plusa2minusc_lebesgue_decomposition} that
\begin{align*}\label{eq:a1plusa2minusc_combined_decomposition}
	\alpha_1 \oplus \alpha_2  - c & = (f_{1} + \eta_{1})\oplus(f_{2}+\eta_{2}) - c\\
	 & = f_{1}\oplus f_{2}  + \eta_{1}\oplus\eta_{2}- c\\
	 & = f_{1}\oplus f_{2} + \big((\eta_{1})_{+}-(\eta_{1})_{-}\big)\oplus\eta_{2}- c\\
	 & = f_{1}\oplus f_{2} + (\eta_{1})_{+}\otimes\lambda -(\eta_{1})_{-}\otimes\lambda + \lambda\otimes\eta_{2}- c\\
   & = f_1 \oplus f_2 - c + (\eta_1)_+ \oplus \eta_2 - (\eta_1)_- \otimes \lambda.
\end{align*} 
Furthermore,
\[ (\eta_1)_- \otimes \lambda \perp f_1 \oplus f_2 - c + (\eta_1)_+ \oplus \eta_2 \]
where the singularity with respect to $f_1 \oplus f_2 - c$ is due to \eqref{eq:f1plusf2minusc_ac_lebesgue} and \eqref{eq:eta1pluseta2_perp_lebesgue} 
and the singularity with respect to $(\eta_1)_+ \oplus \eta_2$ is due to \eqref{eq:eta1p_perp_eta1m}. Thus, 
\begin{align*}
	(\alpha_1 \oplus \alpha_2  - c)_- &= (f_1 \oplus f_2 - c + (\eta_1)_+ \oplus \eta_2)_- + (-(\eta_1)_- \otimes \lambda)_- \\
	                                  &= (f_1 \oplus f_2 - c + (\eta_1)_+ \oplus \eta_2)_- + (\eta_1)_- \otimes \lambda. 
\end{align*}
as $(\eta_1)_- \otimes \lambda$ is a positive measure. Consequently,
\begin{align*}
	(\alpha_1 \oplus \alpha_2  - c)_+ &= (f_1 \oplus f_2 - c + (\eta_1)_+ \oplus \eta_2)_+.
\end{align*}
Repeating this argument with the Hahn-Jordan decomposition of $\eta_2$ yields the claim.

The second part of the lemma is a direct consequence of the first:
Since $(\alpha_{1}\oplus\alpha_{2}-c)_{+} = (\tilde\alpha_{1}\oplus\tilde\alpha_{2}-c)_{+}$,
the first summand in the functional $\Phi$ is equal for $\alpha_{i}$ and $\tilde\alpha_{i}$. However, the second summand in $\Phi$ can not decrease since $\tilde\alpha_{i}\leq\alpha_{i}$, $\mu_{i}\geq 0$ and
$\gamma \langle (\eta_{i})_{-}, \mu_{i} \rangle = \infty$
if the duality pairing does not exist.
\qed
\end{proof}

Now we are ready to prove the existence result for \eqref{eq:dualmassprob}:

\begin{proposition}\label{prop:exdualaux}
  Under Assumption~\ref{assu:dual} the minimization problem~\eqref{eq:dualmassprob} admits a
  solution $(\alpha_{1}^*,\alpha_{2}^*) \in L^1(\Omega_1) \times L^1(\Omega_2)$.
\end{proposition}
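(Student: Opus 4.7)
My plan is to apply the direct method of the calculus of variations in the space of Radon measures, and then to upgrade the regularity of the resulting weak$^*$ limit from $\frakM$ back to $L^1$ by invoking the preparatory lemmas.

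First, I would take any minimizing sequence $\{(\alpha_1^n, \alpha_2^n)\}$ for \eqref{eq:dualmassprob} and, using the translational invariance from Remark~\ref{rem:dual_nonuniqueness}, normalize so that $\int_{\Omega_2}\alpha_2^n\,\dd\lambda_2 = 0$. Setting $w^n \coloneqq \alpha_1^n \oplus \alpha_2^n - c$, the identity~\eqref{eq:PhiG} converts the bound on $\Phi$ into $G(w^n)\leq C$, so Lemma~\ref{lem:wnbound} yields $\{(w^n)_+\}$ bounded in $L^2(\Omega)$ and $\{(w^n)_-\}$ bounded in $L^1(\Omega)$. In particular, $\{w^n\}$, and hence $\{\alpha_1^n\oplus\alpha_2^n\}$ (because $c\in L^2(\Omega)\subset L^1(\Omega)$), is bounded in $\frakM(\Omega)$. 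Lemma~\ref{lem:bounded-min-seq-alpha12} then transports this bound into uniform $\frakM$-bounds on each marginal $\alpha_i^n$ separately.

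Banach-Alaoglu next furnishes a subsequence with $\alpha_i^n\weak^*\alpha_i^*\in\frakM(\Omega_i)$. Testing with continuous functions on $\Omega$ and using Fubini shows $\alpha_1^n\oplus\alpha_2^n\weak^*\alpha_1^*\oplus\alpha_2^*$ in $\frakM(\Omega)$, so $w^n\weak^* w^*\coloneqq\alpha_1^*\oplus\alpha_2^*-c$ as well. Lemma~\ref{lem:weaklimbound} now supplies $w^*_+\in L^2(\Omega)$ together with $G(w^*)\leq \liminf_n G(w^n)$, and the identity~\eqref{eq:PhiG}, extended to the measure setting, translates this into $\Phi(\alpha_1^*,\alpha_2^*)\leq \liminf_n \Phi(\alpha_1^n,\alpha_2^n) < \infty$.

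It remains to show $\alpha_i^* \in L^1(\Omega_i)$. Writing $\alpha_i^* = f_i + \eta_i$ for its Lebesgue decomposition, the absolute continuity of $w^*_+\in L^2$ forces the singular part $w^*_s = \eta_1\otimes\lambda_2 + \lambda_1\otimes\eta_2$ to be a non-positive measure. Restricting $w^*_s$ to the $\lambda_1$-null set that carries $(\eta_1)_+$ (on which $\lambda_1\otimes\eta_2$ vanishes and the $(\eta_1)_-$-piece is disjoint) leaves the positive measure $(\eta_1)_+\otimes\lambda_2$, which can be non-positive only if $(\eta_1)_+=0$; analogously $(\eta_2)_+=0$. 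Hence $(\alpha_i^*)_+=(f_i)_+$ is absolutely continuous, so Lemma~\ref{lem:positivepart_characterization} applies and allows me to pass from $\alpha_i^*$ to its absolutely continuous part $f_i\in L^1(\Omega_i)$ without increasing $\Phi$. This produces a feasible minimizer of~\eqref{eq:dualmassprob} in $L^1(\Omega_1)\times L^1(\Omega_2)$.

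The delicate step is the last one: extracting from the global statement $w^*_+\in L^2$ the marginal information $(\eta_i)_+=0$ requires exploiting the tensor structure of $\oplus$ together with the fact that the singular supports are genuinely $\lambda_i$-null. The remaining work is direct-method bookkeeping, organised around the three preparatory lemmas, with Lemma~\ref{lem:positivepart_characterization} performing the final removal of the negative singular parts.
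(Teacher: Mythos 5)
Your proposal is correct and follows essentially the same route as the paper: normalize and bound the minimizing sequence via Lemma~\ref{lem:wnbound} and Lemma~\ref{lem:bounded-min-seq-alpha12}, pass to weak$^*$ limits using the weak$^*$ continuity of $\oplus$, invoke Lemma~\ref{lem:weaklimbound} for lower semicontinuity, kill the positive singular parts, and finish with Lemma~\ref{lem:positivepart_characterization} to discard the negative singular parts. The only cosmetic difference is that you argue $(\eta_i)_+=0$ directly by restricting $w^*_s$ to the carrier of $(\eta_1)_+$, whereas the paper obtains the same fact by applying Lemma~\ref{lem:positivepart_characterization} (part 1) with $f_1=f_2=0$ and $c=0$; the underlying mutual-singularity argument is identical.
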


\begin{proof}
  We proceed via the classical direct method of the calculus of variations. 
  For this purpose, let $\{(\alpha_{1}^{n},\alpha_{2}^{n})\} \subset L^1(\Omega_1) \times L^1(\Omega_2)$ 
  with $(\alpha_1^n \oplus \alpha_2^n - c)_+ \in L^2(\Omega)$ 
  be a minimizing sequence for \eqref{eq:dualmassprob}, where we shift $\alpha_{1}$ and $\alpha_{2}$ 
  by adding and subtracting constants such that we obtain $\int_{\Omega_{2}}\alpha_2 \,\dd\lambda_2=0$. 
  Note that, due to its additive structure, this does not change the objective $\Phi$ in \eqref{eq:dualmassprob}, cf. Remark~\ref{rem:dual_nonuniqueness}.
  
  Next, let us define $w^{n} \coloneqq \alpha_{1}^{n}\oplus\alpha_{2}^{n}-c$. Then, thanks to 
  \eqref{eq:PhiG} and Lemma~\ref{lem:wnbound}, the sequence $\{w^n\}$ is bounded in $L^1(\Omega)$.
  Hence, there is a weakly$^*$ converging subsequence, which we denote by the same 
  symbol w.l.o.g., i.e.\ $w^n \weak^* \tilde w$ in $\frakM(\Omega)$. Now, Lemma~\ref{lem:weaklimbound} applies 
  giving that 
  \begin{align}
    & \tilde w_+ \in L^2(\Omega), \label{eq:wastfeas}\\
    & G(\tilde w) \leq \liminf_{n\to \infty} G(w^n). \label{eq:wastopt}
  \end{align}
  
  Since $\{w^n\}$ is bounded in $\frakM(\Omega)$, the same holds for $\{\alpha_1^n \oplus \alpha_2^n\}$ and, 
  as $\alpha_2^n$ is normalized, Lemma~\ref{lem:bounded-min-seq-alpha12} gives that $\{\alpha_i^n\}$ is bounded in 
  $\frakM(\Omega_i)$, $i=1,2$. Therefore, we can select a further (sub-)subsequence, still denoted by the same symbol to ease notation, 
  such that 
  \begin{equation*}
   \alpha_i^n \weak^* \tilde \alpha_i \quad \text{in } \frakM(\Omega_i), \quad i = 1,2.
  \end{equation*}
  Since the mapping $\frakM(\Omega_1) \times \frakM(\Omega_2) \ni (\alpha_1, \alpha_2) \mapsto \alpha_1 \oplus \alpha_2 \in \frakM(\Omega)$
  is the adjoint of the projection mapping $C(\Omega) \ni \varphi \mapsto \big(\int_{\Omega_2} \varphi \,\dd \lambda_2, \int_{\Omega_1} \varphi \,\dd \lambda_1\big)
  \in C(\Omega_1) \times C(\Omega_2)$, 
  see Remark~\ref{rem:oplus_adjoint}, it is weakly$^*$ continuous so that 
  \begin{equation}
   \tilde w = \tilde \alpha_{1}\oplus\tilde \alpha_{2}-c.
  \end{equation}
 
 Next, we investigate the singular parts of $\tilde \alpha_1$ and $\tilde \alpha_2$. We start with the positive part and 
 employ Lebesgue's decomposition of $\tilde \alpha_1$ and $\tilde \alpha_2$: 
 \begin{equation*}
  \tilde\alpha_i = \alpha_i^* + \tilde\eta_i, \quad
  \alpha_i^* \ll \lambda_i, \quad \tilde\eta_i \perp \lambda_i,\quad   i =1, 2.
 \end{equation*}
 In the following we will see that the regular parts $\alpha_i^* \in L^1(\Omega_i)$, $i=1,2$, are exactly the solution of 
 \eqref{eq:dualmassprob}. For this purpose, we first show that the positive parts of $\tilde\eta_1$ and $\tilde\eta_2$ vanish. 
 We have $\alpha_1^*\oplus \alpha_2^* - c \ll \lambda$, $\tilde\eta_1 \oplus \tilde \eta_2 \perp \lambda$, and, 
 by uniqueness of Lebesgue's decomposition,
 $\tilde w_s = \tilde \eta_1 \oplus \tilde \eta_2$. But from \eqref{eq:wastfeas}, we know that $(\tilde w_s)_+ = 0$. 
 Combining this fact with Lemma~\ref{lem:positivepart_characterization}, applied to the case $f_1 = 0$, $f_2 = 0$, and $c = 0$, 
 we obtain
 \[ (\tilde \eta_1 \oplus \tilde \eta_2)_+ = (\tilde \eta_1)_+ \oplus (\tilde \eta_2)_+. \]
 and consequently, $(\tilde \eta_i)_+ = 0$ for $i =1,2$ by positivity.  
 Therefore, $(\tilde \alpha_{i})_{+}$ are $L^{1}$-functions rather than measures
 and
\begin{align}\label{eq:alphaastfeas}
	\tilde{w}_+
	= (\tilde{\alpha}_1 \oplus \tilde{\alpha}_2 - c)_+
	= (\alpha_1^* \oplus \alpha_2^* - c)_+
\end{align}
 Now Lemma~\ref{lem:positivepart_characterization} shows feasibility of $(\alpha_1^*, \alpha_2^*)$ for \eqref{eq:dualmassprob} and we also see that
\begin{equation}\label{eq:dualopt}
\begin{aligned}
 \Phi(\alpha_1^*, \alpha_2^*)
  &= \tfrac{1}{2}\int_\Omega (\alpha_1^* \oplus \alpha_2^* - c)_+^2\,\dd\lambda 
  - \gamma \int_{\Omega_1} \mu_1\, \alpha_1^* \,\dd\lambda_1
  - \gamma \int_{\Omega_2} \mu_2\, \alpha_2^* \,\dd\lambda_2 \\
  &\leq G(\tilde \alpha_1 \oplus \tilde \alpha_2 - c) - \int_\Omega c\,\mu\,\dd\lambda\\
  &= G(\tilde w) - \int_\Omega c\,\mu\,\dd\lambda\\
  & \leq \liminf_{n\to\infty} \Phi(\alpha_1^n, \alpha_2^n),
\end{aligned}
\end{equation}
which demonstrates the optimality of $(\alpha_1^*, \alpha_2^*)$.
\qed
\end{proof}

In the following, we assume that $\int_{\Omega_2} \alpha_2^*\,\dd\lambda_2 = 0$. If this is not the case, 
then we can again shift $\alpha_1^*$ and $\alpha_2^*$ without changing the value of $\Phi$, cf.~Remark~\ref{rem:dual_nonuniqueness}.

\begin{theorem}\label{thm:minimizers_dualproblem}
 Let Assumption~\ref{assu:dual} hold.
 Then every optimal dual solution $(\alpha_1^*, \alpha_2^*)$ from Proposition~\ref{prop:exdualaux} satisfies 
 $\alpha_i^*\in L^2(\Omega_i)$, $i=1,2$, and is therefore also a solution of the original dual problem \eqref{eq:qrot-dual-objective}. Moreover, the negative parts of $\alpha_{i}^{*}$ are bounded and the function $\frac{1}{\gamma}\left(\left(\alpha_{1}^{*} \oplus
      \alpha_{2}^{*}\right)-c\right)_{+}$
  has marginals the $\mu_{1}$ and $\mu_{2}$.
\end{theorem}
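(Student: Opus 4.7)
The plan is to derive pointwise first-order optimality conditions for $(\alpha_1^*, \alpha_2^*)$ directly from minimality in~\eqref{eq:dualmassprob} and then to combine these with the strict positivity of $\mu_i$ to extract both an $L^2$-dominated upper bound and an essential lower bound on each $\alpha_i^*$. For any $\phi_1 \in L^\infty(\Omega_1)$, the perturbation $(\alpha_1^*+t\phi_1,\alpha_2^*)$ stays feasible for~\eqref{eq:dualmassprob}, since $((\alpha_1^*+t\phi_1)\oplus\alpha_2^* - c)_+ \leq (\alpha_1^*\oplus\alpha_2^* - c)_+ + |t|\,\|\phi_1\|_{\infty} \in L^2(\Omega)$. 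Differentiating $t\mapsto \Phi(\alpha_1^*+t\phi_1,\alpha_2^*)$ at $t=0$, using that $s\mapsto \tfrac{1}{2} s_+^2$ is $C^1$ with derivative $s_+$ together with dominated convergence, and applying Fubini yields
\begin{equation}\label{eq:plan-oc}
  \int_{\Omega_2}(\alpha_1^*(x_1)+\alpha_2^*(x_2)-c(x_1,x_2))_+\,\dd\lambda_2 = \gamma\,\mu_1(x_1) \quad \text{for a.e. } x_1\in\Omega_1,
\end{equation}
and symmetrically when varying $\alpha_2$.

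Using the pointwise inequality $s_+\geq s$ inside the integrand of~\eqref{eq:plan-oc} together with the normalization $\int_{\Omega_2}\alpha_2^*\,\dd\lambda_2=0$, I obtain the pointwise upper bound
\[
  \alpha_1^*(x_1) \leq \tfrac{1}{|\Omega_2|}\Bigl(\gamma\mu_1(x_1) + \textstyle\int_{\Omega_2}c(x_1,x_2)\,\dd\lambda_2\Bigr),
\]
whose right-hand side is in $L^2(\Omega_1)$ because $\mu_1\in L^2(\Omega_1)$ and $P_1c\in L^2(\Omega_1)$ by Cauchy--Schwarz. Hence $(\alpha_1^*)_+\in L^2(\Omega_1)$. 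For the lower bound, if $\alpha_1^*(x_1) < -M$ with $M>-\underline{c}$, then~\eqref{eq:plan-oc} together with $c\geq\underline{c}$ gives
\[
  \gamma\,\delta \leq \gamma\,\mu_1(x_1) \leq \int_{\Omega_2}(\alpha_2^*(x_2)-M-\underline{c})_+\,\dd\lambda_2 \leq \int_{\{\alpha_2^*>M+\underline{c}\}}(\alpha_2^*)_+\,\dd\lambda_2,
\]
and the last integral tends to zero as $M\to\infty$ by absolute continuity of the Lebesgue integral applied to $(\alpha_2^*)_+\in L^1(\Omega_2)$. For $M$ sufficiently large the chain of inequalities fails independently of $x_1$, so that $\alpha_1^*$ is essentially bounded below; in particular $(\alpha_1^*)_-\in L^\infty(\Omega_1)$.

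The analogous reasoning applied to the $\alpha_2$-optimality condition, where the upper bound now uses that $\int_{\Omega_1}\alpha_1^*\,\dd\lambda_1$ is a finite constant thanks to $\alpha_1^*\in L^1(\Omega_1)$, gives $(\alpha_2^*)_+ \in L^2(\Omega_2)$ and $(\alpha_2^*)_-\in L^\infty(\Omega_2)$. Combining these bounds, $\alpha_i^*\in L^2(\Omega_i)$ for $i=1,2$, so $(\alpha_1^*,\alpha_2^*)$ is feasible for~\eqref{eq:qrot-dual-objective}; since that problem's feasible set is contained in the one of~\eqref{eq:dualmassprob} while the two objectives coincide on $L^2\times L^2$, $(\alpha_1^*,\alpha_2^*)$ also minimizes $\Phi$ over~\eqref{eq:qrot-dual-objective}. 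Finally, dividing~\eqref{eq:plan-oc} and its $\alpha_2$-analogue by $\gamma$ is exactly the claim that $\tfrac{1}{\gamma}(\alpha_1^*\oplus\alpha_2^* - c)_+$ has marginals $\mu_1$ and $\mu_2$. The main obstacle I anticipate is the lower bound step: the essential boundedness of $(\alpha_i^*)_-$ has to be extracted from the rather weak $L^1$-integrability of $\alpha_j^*$ through a uniform absolute continuity argument driven by the strict positivity $\mu_i\geq\delta$.
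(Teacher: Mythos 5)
Your proof is correct. Its first two steps coincide with the paper's: you derive the marginal identity $\int_{\Omega_2}(\alpha_1^*\oplus\alpha_2^*-c)_+\,\dd\lambda_2=\gamma\mu_1$ by perturbing the first argument and passing to the limit with dominated convergence, and you bound $(\alpha_1^*)_-$ in $L^\infty$ by showing that $\int_{\Omega_2}(\alpha_2^*-M-\underline{c})_+\,\dd\lambda_2<\gamma\delta$ for $M$ large, which contradicts $\mu_1\geq\delta$ on any positive-measure set where $\alpha_1^*\leq -M$ (the paper gets the smallness of that integral from monotone convergence of $f_n=(-n+\alpha_2^*-\underline{c})_+$ rather than from absolute continuity of the integral of $(\alpha_2^*)_+$; the two are interchangeable). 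Where you genuinely diverge is the $L^2$ bound on the positive parts. The paper uses the normalization $\int_{\Omega_2}\alpha_2^*\,\dd\lambda_2=0$ to write $|\Omega_2|\,\|\alpha_1^*\|_{L^2}^2+|\Omega_1|\,\|\alpha_2^*\|_{L^2}^2=\int_\Omega|\alpha_1^*\oplus\alpha_2^*|^2\,\dd\lambda$ and then estimates $(\alpha_1^*\oplus\alpha_2^*)_+$ by $(\alpha_1^*\oplus\alpha_2^*-c)_+$ and $c_+$, and $(\alpha_1^*\oplus\alpha_2^*)_-$ by the already-bounded negative parts, so both norms are controlled simultaneously and symmetrically. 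You instead drop the positive part in the marginal identity via $s_+\geq s$ to obtain the pointwise domination $\alpha_1^*\leq\tfrac{1}{|\Omega_2|}\bigl(\gamma\mu_1+P_1c\bigr)$ almost everywhere, with right-hand side in $L^2(\Omega_1)$ by Cauchy--Schwarz; this is slightly more elementary and yields a stronger, explicit pointwise upper bound, at the price of treating the two marginals asymmetrically (your bound for $\alpha_2^*$ additionally invokes the finite constant $\int_{\Omega_1}\alpha_1^*\,\dd\lambda_1$). The concluding feasibility-and-optimality argument and the identification of the marginal identity, divided by $\gamma$, as the marginal condition for $\gamma^{-1}(\alpha_1^*\oplus\alpha_2^*-c)_+$ are the same as in the paper.
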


\begin{proof}
 We again consider the positive and the negative part separately and start with $(\alpha_1^*)_-$. 
 Let $\varphi \in C_c^\infty(\Omega_1)$ and $t> 0$ be fixed, but arbitrary. Then, thanks to 
 \begin{equation*}
  0 \leq ((\alpha_1^* + t\, \varphi) \oplus \alpha_2^*-c)_+
  \leq (\alpha_1^* \oplus \alpha_2^*-c)_+ + t \, \varphi_+,
 \end{equation*}
 Proposition~\ref{prop:exdualaux} implies that $((\alpha_1^* + t \varphi) \oplus \alpha_2^*-c)_+ \in L^2(\Omega)$ so that 
 $(\alpha_1^* + t \varphi, \alpha_2^*)$ is feasible for \eqref{eq:dualmassprob}. Therefore, the optimality of $(\alpha_1^*, \alpha_2^*)$
 for \eqref{eq:dualmassprob} yields
 \begin{equation*}
  \tfrac{1}{2}
  \int_\Omega \frac{1}{t} \Big( ((\alpha_1^* + t\, \varphi) \oplus \alpha_2^*-c)_+^2 - (\alpha_1^* \oplus \alpha_2^*-c)_+^2\Big) \,\dd \lambda
  - \gamma\int_{\Omega_1} \mu_1 \,\varphi\,\dd\lambda_1 \geq 0 \quad \forall\, t > 0.
 \end{equation*}
 Owing to the continuous differentiability of $\RR \ni r \mapsto r_+^2 \in \RR$, the first integrand converges to 
 $2 (\alpha_1^* \oplus \alpha_2^*-c)_+\varphi$ $\lambda$-a.e.~in $\Omega$ for $t\searrow 0$. 
 Moreover, the Lipschitz continuity of the $\max$-function gives that 
 \begin{equation*}
  \frac{1}{t} \Big( ((\alpha_1^* + t\, \varphi) \oplus \alpha_2^*-c)_+^2 - (\alpha_1^* \oplus \alpha_2^*-c)_+^2\Big) 
  \leq |\varphi|^2 + 2\,|\varphi|\, (\alpha_1^* \oplus \alpha_2^*-c)_+^2 \quad \text{a.e.~in }\Omega
 \end{equation*}
 holds for $0<t\leq 1$.
 Hence, due to Lebesgue's dominated convergence theorem, we are allowed to pass to the limit $t\searrow 0$ and obtain in this way
 \begin{equation*}
  \int_{\Omega_1} \Big(\int_{\Omega_2}  (\alpha_1^* \oplus \alpha_2^*-c)_+ \,\dd\lambda_2 - \gamma\mu_1\Big)\varphi \,\dd\lambda_1
  \geq 0.
 \end{equation*}
 Since $\varphi \in C_c^\infty(\Omega)$ was arbitrary, the fundamental lemma of the calculus of variations thus gives
 \begin{equation}\label{eq:noc1}
  \int_{\Omega_2}  (\alpha_1^* \oplus \alpha_2^*-c)_+ \,\dd\lambda_2 = \gamma\mu_1\quad 
  \text{$\lambda_1$-a.e.\ in } \Omega_1.
 \end{equation}
 Next, define the following sequence of functions in $L^1(\Omega_2)$:
 \begin{equation*}
  f_n(x_2) \coloneqq (-n + \alpha_2^*(x_2) - \underline{c})_+, \quad n\in \NN,
 \end{equation*}
 where $\underline{c}$ is the lower bound for $c$ from Assumption~\ref{assu:dual}. Then we have $f_n \geq 0$
 $\lambda_2$-a.e.~$\Omega_2$ and $f_n \searrow 0$ $\lambda_2$-a.e.\ in $\Omega_2$ so that the monotone 
 convergence theorem gives
 \begin{equation*}
  \int_{\Omega_2} (-n + \alpha_2^*(x_2) - \underline{c})_+ \,\dd\lambda_2 = \int_{\Omega_2} f_n(x_2) \,\dd\lambda_2 \to 0
  \quad \text{as } n\to \infty.
 \end{equation*}
 Thus there exists $N\in \NN$ such that 
 \begin{equation}
  \int_{\Omega_2} (-N + \alpha_2^*(x_2) - \underline{c})_+ \,\dd\lambda_2 < \gamma\,\delta, 
 \end{equation}
 where $\delta>0$ is the threshold for $\mu_1$ from Assumption~\ref{assu:dual}. 
 Now assume that $\alpha_1^* \leq -N$ $\lambda_1$-a.e.\ on a set of $E \subset \Omega_1$ of positive Lebesgue measure.  
 Then 
 \begin{equation*}
 \begin{aligned}
  \int_{\Omega_2}  (\alpha_1^* \oplus \alpha_2^*-c)_+ \,\dd\lambda_2 
  & \leq \int_{\Omega_2}  (-N \oplus \alpha_2^*- \underline{c})_+ \,\dd\lambda_2 
  < \gamma\,\delta \leq \gamma\,\mu_1 \quad \text{$\lambda_1$-a.e.\ in } E,
 \end{aligned}
 \end{equation*}
 which contradicts \eqref{eq:noc1}. Therefore, $\alpha_1^* > -N$ $\lambda_1$-a.e.\ in $\Omega_1$, 
 which even implies that $(\alpha_1^*)_- \in L^\infty(\Omega_1)$. Concerning $(\alpha_2^*)_-$, one 
 can argue in exactly the same way to conclude that $(\alpha_2^*)_- \in L^\infty(\Omega_2)$, too.

 For the positive parts we find
 \begin{equation*}
 \begin{aligned}
  & |\Omega_2|\,\|\alpha_1^*\|_{L^2(\Omega_1)}^2 
  + |\Omega_1|\,\|\alpha_2^*\|_{L^2(\Omega_2)}^2\\
  &\quad = \int_\Omega |\alpha_1^* \oplus \alpha_2^*|^2 \,\dd\lambda  \qquad\qquad\qquad\qquad \big(\text{since } 
  \int_{\Omega_2}\alpha_2^*\,\dd\lambda_2 = 0\big)\\
  &\quad = \int_\Omega (\alpha_1^* \oplus \alpha_2^*)_+^2  + (\alpha_1^* \oplus \alpha_2^*)_-^2 \,\dd\lambda\\
  &\quad \leq 2\int_\Omega (\alpha_1^* \oplus \alpha_2^* - c)_+^2 + c_+^2 
  + (\alpha_1^*)_-^2 + (\alpha_2^*)_-^2 \,\dd\lambda < \infty,
 \end{aligned}
 \end{equation*}
 where we used \eqref{eq:alphaastfeas} and the boundedness of the negative parts proven above.
 Note that the constant shift, potentially needed to ensure $\int_{\Omega_2}\alpha_2^*\,\dd\lambda_2 = 0$ 
 has no effect on the equation in \eqref{eq:alphaastfeas} due to the additive structure of $\oplus$.
 
    We have thus shown that $(\alpha_1^*, \alpha_2^*)$ is feasible for \eqref{eq:qrot-dual-objective}. 
    Since $(\alpha_1^*, \alpha_2^*)$ solves \eqref{eq:dualmassprob}, 
    whose objective is the same as in \eqref{eq:qrot-dual-objective}, while its feasible set is larger,
    this implies that we have found a solution to \eqref{eq:qrot-dual-objective}. 
 \qed 
\end{proof}

We now show that, if $\pi^*$ is of the form $\pi^* = \gamma^{-1}(\alpha_1^* \oplus \alpha_2^* - c)_+$ 
with two functions $\alpha_i^* \in L^2(\Omega_i)$, $i=1,2$, 
and has the marginals $\mu_1$ and $\mu_2$, respectively, then it solves the necessary and sufficient optimality conditions 
of the primal problem \eqref{qrot-cont} in form of the following variational inequality:
\begin{equation}\tag{VI}\label{eq:vi}
    \pi^* \in \calF, \quad \scp{\gamma \pi^* + c}{\pi - \pi^*}_{L^2} \geq 0 \quad \forall\, \pi \in \calF.
\end{equation}
Herein, $\calF$ is the (convex) feasible set of \eqref{qrot-cont}, i.e.\
\begin{equation*}
\begin{aligned}
    \calF := \Big\{ \pi \in L^2(\Omega) : \pi \geq 0 \text{ $\lambda$-a.e.\ in } \Omega, \, 
    & \int_{\Omega_2} \pi \dd \lambda_2 = \mu_1 \text{ $\lambda_1$-a.e.\ in } \Omega_1, \\ 
    & \int_{\Omega_1} \pi \dd \lambda_1 = \mu_2 \text{ $\lambda_2$-a.e.\ in } \Omega_2 \Big\}.
\end{aligned}
\end{equation*}
For this purpose, let $\pi \in \calF$ be fixed but arbitrary. Multiplying the equality constraints in $\calF$ with $\alpha_1^*$ and $\alpha_2^*$, respectively, 
integrating the arising equations and add them yields
\begin{align}
    \int_{\Omega_1} \mu_1 \alpha_1^* \dd \lambda_1 + \int_{\Omega_2} \mu_2 \alpha_2^* \dd \lambda_2
    &= \int_{\Omega} \pi (\alpha_1^* \oplus \alpha_2^*) \dd \lambda \nonumber\\
    &= \int_\Omega \pi \big[(\alpha_1^* \oplus \alpha_2^* - c)_+ + c\big]\dd \lambda 
    - \int_\Omega \pi (\alpha_1^* \oplus \alpha_2^* - c)_-\dd \lambda \nonumber\\
    &\leq \int_\Omega \pi (\gamma \pi^* + c)\dd \lambda,\label{eq:vi1}
\end{align}
where we used $\pi \geq 0$ for the last inequality. Using the feasibility of $\pi^*$, we find similarly
\begin{align}    
    \int_{\Omega_1} \mu_1 \alpha_1^* \dd \lambda_1 + \int_{\Omega_2} \mu_2 \alpha_2^* \dd \lambda_2
    &= \int_{\Omega} \pi^* \big[(\alpha_1^* \oplus \alpha_2^* - c) +c\big]\dd \lambda \nonumber\\
    &= \int_\Omega \gamma^{-1}(\alpha_1^* \oplus \alpha_2^* - c)_+ \big[(\alpha_1^* \oplus \alpha_2^* - c) +  c\big]\dd \lambda \nonumber\\
    &= \int_\Omega \pi^* (\gamma \pi^* + c)\dd \lambda.\label{eq:vi2}
\end{align}
Combining \eqref{eq:vi1} and \eqref{eq:vi2} now yields \eqref{eq:vi}. 
As \eqref{qrot-cont} is a strictly convex minimization problem, this shows that, 
if $\pi^*$ has the form $\pi^* = \gamma^{-1}(\alpha_1^* \oplus \alpha_2^* - c)_+$ 
with functions $\alpha_i^* \in L^2(\Omega_i)$ and satisfies $\pi^* \in \calF$, then it is a solution of \eqref{qrot-cont}. On the other hand, 
we know from Theorem~\ref{thm:minimizers_dualproblem} that, under Assumption~\ref{assu:dual} 
(more or less needed for the existence of solutions of \eqref{qrot-cont} anyway), 
there always exist $\alpha_i^* \in L^2(\Omega_i)$ so that $\pi^* = \gamma^{-1}(\alpha_1^* \oplus \alpha_2^* - c)_+$ satisfies the 
equality constraints in $\calF$. Therefore, in summary we have deduced the following:

\begin{theorem}[Necessary and Sufficient Optimality Conditions for \eqref{qrot-cont}]\label{thm:optimality-ssn}
    Under Assumption~\ref{assu:dual}, $\pi^*\in L^2(\Omega)$ is a solution of \eqref{qrot-cont} 
    if and only if there exist functions $\alpha_i^* \in L^2(\Omega_i)$, $i=1,2$, such that the following 
    optimality system is fulfilled:
   \begin{subequations}
      \label{eq:kkt-qrot-ssn}
    \begin{alignat}{3}
      \pi^* - \tfrac1\gamma\left(\alpha_{1}^*\oplus\alpha_{2}^*-c\right)_{+}
      &= 0 &\quad  & \text{$\lambda$-a.e.~in }\Omega,\label{eq:kkt-qrot-ssn-1}\\
      \int_{\Omega_{2}}\left(\alpha_{1}^*\oplus\alpha_{2}^*-c\right)_{+} \,\dd \lambda_2  
      & =\gamma\mu_{1}  & & \text{$\lambda_1$-a.e.~in }\Omega_1,\label{eq:kkt-qrot-ssn-2}\\
      \int_{\Omega_{1}}\left(\alpha_{1}^*\oplus\alpha_{2}^*-c\right)_{+} \,\dd \lambda_{1} 
      & =\gamma\mu_{2} & & \text{$\lambda_2$-a.e.~in }\Omega_2.\label{eq:kkt-qrot-ssn-3}
    \end{alignat}
    \end{subequations}
\end{theorem}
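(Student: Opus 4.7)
The plan is to assemble the two implications from material already developed in the excerpt, using the variational inequality computation and Theorem~\ref{thm:minimizers_dualproblem} as the two main pre-built components. Strict convexity of the primal objective $E_\gamma$ (cf.~\eqref{eq:primobj}) will ensure uniqueness of the primal minimizer and thereby let us identify candidate solutions constructed from dual data with $\pi^*$ itself.

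For the sufficient direction, suppose $\pi^* \in L^2(\Omega)$ together with $\alpha_i^* \in L^2(\Omega_i)$ satisfies the system \eqref{eq:kkt-qrot-ssn}. Then \eqref{eq:kkt-qrot-ssn-1} immediately gives $\pi^* \geq 0$ $\lambda$-a.e., while substituting \eqref{eq:kkt-qrot-ssn-1} into \eqref{eq:kkt-qrot-ssn-2} and \eqref{eq:kkt-qrot-ssn-3} yields the two marginal equalities defining $\calF$, so $\pi^* \in \calF$. The VI derivation \eqref{eq:vi1}--\eqref{eq:vi2} above was carried out precisely under the assumptions that $\pi^*$ has the form $\gamma^{-1}(\alpha_1^* \oplus \alpha_2^* - c)_+$ with $\alpha_i^* \in L^2(\Omega_i)$ and that $\pi^* \in \calF$; invoking it verbatim gives $\langle \gamma \pi^* + c, \pi - \pi^* \rangle_{L^2} \geq 0$ for every $\pi \in \calF$, i.e.\ \eqref{eq:vi}. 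As \eqref{qrot-cont} is a strictly convex minimization over the convex set $\calF$, this VI characterizes the unique minimizer, hence $\pi^*$ solves \eqref{qrot-cont}.

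For the necessary direction, assume $\pi^*$ solves \eqref{qrot-cont}. By Theorem~\ref{thm:minimizers_dualproblem}, the dual problem \eqref{eq:qrot-dual-objective} admits a solution $(\alpha_1^*,\alpha_2^*) \in L^2(\Omega_1) \times L^2(\Omega_2)$, and the associated function $\hat\pi \coloneqq \gamma^{-1}(\alpha_1^* \oplus \alpha_2^* - c)_+$ has marginals $\mu_1$ and $\mu_2$; since $\hat\pi \geq 0$ by construction, we obtain $\hat\pi \in \calF$. Applying the VI computation \eqref{eq:vi1}--\eqref{eq:vi2} to $\hat\pi$ shows that $\hat\pi$ satisfies \eqref{eq:vi} and therefore solves \eqref{qrot-cont}. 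By strict convexity the primal minimizer is unique, hence $\pi^* = \hat\pi$, which is exactly \eqref{eq:kkt-qrot-ssn-1}. Integrating this identity over $\Omega_2$ and $\Omega_1$ respectively and using that $\hat\pi$ has marginals $\mu_1$ and $\mu_2$ then yields \eqref{eq:kkt-qrot-ssn-2} and \eqref{eq:kkt-qrot-ssn-3}.

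There is no real obstacle left at this stage: all the heavy lifting—existence of an $L^2$ dual solution, boundedness of the negative parts, and the fact that $\hat\pi$ reproduces the marginals—has been done in Theorem~\ref{thm:minimizers_dualproblem}, while the VI identities \eqref{eq:vi1}--\eqref{eq:vi2} and strict convexity of $E_\gamma$ do the rest. The proof therefore reduces to a short bookkeeping argument combining these two ingredients in both directions.
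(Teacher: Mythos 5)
Your argument is correct and follows essentially the same route as the paper: the sufficiency direction is exactly the variational-inequality computation \eqref{eq:vi1}--\eqref{eq:vi2} combined with strict convexity of $E_\gamma$, and the necessity direction uses Theorem~\ref{thm:minimizers_dualproblem} to produce $\hat\pi = \gamma^{-1}(\alpha_1^*\oplus\alpha_2^*-c)_+ \in \calF$ and uniqueness of the primal minimizer to identify $\pi^*=\hat\pi$. The paper presents this as the discussion preceding the theorem rather than a separate proof, but the content is the same.
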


The significance of Theorem~\ref{thm:optimality-ssn} lies in the fact that we can characterize optimality of $\pi$ 
by just two equalities in $L^{2}(\Omega_{1})$ and $L^{2}(\Omega_{2})$, respectively, namely \eqref{eq:kkt-qrot-ssn-2} 
and \eqref{eq:kkt-qrot-ssn-3}.
Thus, we effectively reduce the size of the problem from searching one function on $\Omega = \Omega_{1}\times \Omega_{2}$ 
to searching two functions, one on $\Omega_{1}$ and one on $\Omega_{2}$ (similarly as for entropic regularization, cf.~\cite{carlier2017convergence}).
This will be exploited numerically in Section~\ref{sec:algorithms}.

\subsection{Regularization of the dual problem}
\label{sec:reg-dual}

As seen before, the dual problem in \eqref{eq:qrot-dual-objective} is not uniquely solvable. 
One source of non-uniqueness is of course the kernel of the map $(\alpha_{1},\alpha_{2})\mapsto \alpha_{1}\oplus\alpha_{2}$. 
This kernel is one-dimensional and is spanned by the function $(1,-1)$, 
which could be easily taken into account in an algorithmic framework.
However, there is another source of non-uniqueness due to the max-operator that cuts of the negative part.
Here is a simple example where dual solutions are not unique: For $\Omega_1 = \Omega_2 = [0,1]$, $\mu_1 = \mu_2 \equiv 1$, and 
\begin{equation*}
  c(x,y) := \begin{cases}
      C, & \text{if } \frac{1}{2} \leq x \leq 1, \;  \frac{1}{2} \leq y \leq 1,\\
      0, & \text{else},
  \end{cases}
  \qquad \text{with} \quad C > 4,
\end{equation*}
one can show by a straight forward calculation that, for every $\delta \in [0,\frac{C-4}{2}]$, the tuple
\begin{equation*}
    \alpha_1^*(x) = \begin{cases}
        1 + \delta, & \text{if } x \in [0,\frac{1}{2}),\\
        -1 - \delta, & \text{if } x \in [\frac{1}{2}, 1],
    \end{cases}
    \qquad 
    \alpha_2^*(y) = \begin{cases}
        3 + \delta, & \text{if } y \in [0,\frac{1}{2}),\\
        1 - \delta, & \text{if } y \in [\frac{1}{2}, 1],
    \end{cases}
\end{equation*}
solves the optimality system \eqref{eq:kkt-qrot-ssn-2}--\eqref{eq:kkt-qrot-ssn-3}. This shows that the potential structure of 
non-uniqueness might become fairly intricate. 
A situation like this can certainly happen in the discretized problem we will derive in Section~\ref{sec:discrete-dual} and can lead to problems when we derive algorithms for the discrete problem since non-unique solutions imply a degenerate Hessian at the optimum.

Therefore, we investigate the following regularization of the dual problem:
\begin{equation}\tag{D$_\varepsilon$}
  \label{eq:dualeps}
  \left.
  \begin{aligned}
    \min \quad &  \Phi_\varepsilon(\alpha_{1},\alpha_{2}) \coloneqq \Phi(\alpha_{1},\alpha_{2}) 
    + \tfrac{\varepsilon}{2} \big(\|\alpha_1\|_{L^2(\Omega_1)}^2 + \|\alpha_2\|_{L^2(\Omega_2)}^2 \big)\\
    \text{s.t.} \quad & \alpha_i \in L^2(\Omega_i), \, i =1,2,
  \end{aligned}
  \quad \right\}
\end{equation}
with a regularization parameter $\varepsilon > 0$. It is clear that the additional quadratic terms in 
the regularized objective $\Phi_\varepsilon$ yield that the latter is strictly convex and coercive in 
$L^2(\Omega_1)\times L^2(\Omega_2)$. Therefore, for every $\varepsilon>0$, \eqref{eq:dualeps} admits a 
unique solution.

\begin{proposition}\label{prop:dualconv}
 Let $\{\varepsilon_n\} \subset \RR^+$ be a sequence converging to zero and denote the solutions of \eqref{eq:dualeps}
 with $\varepsilon = \varepsilon_n$
 by $(\alpha_1^n, \alpha_2^n) \in L^2(\Omega_1)\times L^2(\Omega_2)$.
 Then the sequence $\{(\alpha_1^n, \alpha_2^n)\}$ admits a weak accumulation point, every weak accumulation point is also strong one and
 a solution of the original dual problem \eqref{eq:qrot-dual-objective}.
\end{proposition}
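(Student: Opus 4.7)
The plan is to use the classical Tikhonov-type regularization argument: compare the value of $\Phi_{\varepsilon_n}$ at the regularized minimizer with its value at a fixed solution of the unregularized problem in order to get a uniform $L^2$-bound, extract a weak accumulation point, identify it as a solution of \eqref{eq:qrot-dual-objective} via weak lower semicontinuity, and finally upgrade weak to strong convergence via convergence of norms.

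First, I would fix any solution $(\alpha_1^\star,\alpha_2^\star)\in L^2(\Omega_1)\times L^2(\Omega_2)$ of \eqref{eq:qrot-dual-objective}, provided by Theorem~\ref{thm:minimizers_dualproblem}. The optimality of $(\alpha_1^n,\alpha_2^n)$ for \eqref{eq:dualeps} with $\varepsilon=\varepsilon_n$ gives
\[
  \Phi(\alpha_1^n,\alpha_2^n) + \tfrac{\varepsilon_n}{2}\bigl(\|\alpha_1^n\|_{L^2(\Omega_1)}^2 + \|\alpha_2^n\|_{L^2(\Omega_2)}^2\bigr)
  \leq \Phi(\alpha_1^\star,\alpha_2^\star) + \tfrac{\varepsilon_n}{2}\bigl(\|\alpha_1^\star\|_{L^2(\Omega_1)}^2 + \|\alpha_2^\star\|_{L^2(\Omega_2)}^2\bigr).
\]
Since $\Phi(\alpha_1^n,\alpha_2^n) \geq \Phi(\alpha_1^\star,\alpha_2^\star)$ by minimality of $(\alpha_1^\star,\alpha_2^\star)$ for $\Phi$, the non-regularized objective values drop out and we obtain the uniform bound $\|\alpha_1^n\|^2 + \|\alpha_2^n\|^2 \leq \|\alpha_1^\star\|^2 + \|\alpha_2^\star\|^2$. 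By reflexivity of $L^2$, a subsequence (not relabelled) converges weakly to some $(\bar\alpha_1,\bar\alpha_2)\in L^2(\Omega_1)\times L^2(\Omega_2)$.

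Next, I would identify the weak limit as a solution of \eqref{eq:qrot-dual-objective}. The functional $\Phi$ is convex and continuous on $L^2(\Omega_1)\times L^2(\Omega_2)$: the quadratic term $\tfrac{1}{2}\|(\alpha_1\oplus\alpha_2 - c)_+\|_{L^2(\Omega)}^2$ is the composition of the continuous affine map $(\alpha_1,\alpha_2)\mapsto \alpha_1\oplus\alpha_2 - c$ (whose continuity reduces to $\|\alpha_1\oplus\alpha_2\|_{L^2(\Omega)}^2 \leq 2(|\Omega_2|\,\|\alpha_1\|^2 + |\Omega_1|\,\|\alpha_2\|^2)$ and uses that $\Omega_1,\Omega_2$ have finite Lebesgue measure) with the convex continuous functional $v\mapsto \tfrac{1}{2}\|v_+\|_{L^2(\Omega)}^2$, while the two duality pairings are linear and continuous. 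Convex continuous functionals on a Hilbert space are weakly lower semicontinuous. Taking the limit inferior in the displayed inequality above, using $\varepsilon_n\to 0$ together with the uniform $L^2$-bound just established, yields
\[
  \Phi(\bar\alpha_1,\bar\alpha_2) \leq \liminf_{n\to\infty}\Phi(\alpha_1^n,\alpha_2^n) \leq \Phi(\alpha_1^\star,\alpha_2^\star) = \min\Phi,
\]
so $(\bar\alpha_1,\bar\alpha_2)$ itself solves \eqref{eq:qrot-dual-objective}.

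Finally, for the strong convergence I would invoke the bound from the first step with the specific choice $(\alpha_1^\star,\alpha_2^\star) := (\bar\alpha_1,\bar\alpha_2)$, which is now legal since we have just shown $(\bar\alpha_1,\bar\alpha_2)$ to be a solution of \eqref{eq:qrot-dual-objective}. This gives $\limsup_{n\to\infty}(\|\alpha_1^n\|^2 + \|\alpha_2^n\|^2) \leq \|\bar\alpha_1\|^2 + \|\bar\alpha_2\|^2$. Combined with weak lower semicontinuity of the norm, $\|\bar\alpha_i\|^2 \leq \liminf_n \|\alpha_i^n\|^2$ for $i=1,2$, summation forces equality throughout and hence $\|\alpha_i^n\|_{L^2(\Omega_i)} \to \|\bar\alpha_i\|_{L^2(\Omega_i)}$ for each $i$. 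In a Hilbert space, weak convergence together with convergence of norms implies strong convergence, completing the argument. The only mildly delicate point is the weak lower semicontinuity of $\Phi$, but as noted this reduces to continuity of $\oplus$, which is immediate from the compactness of $\Omega_1$ and $\Omega_2$; everything else is standard.
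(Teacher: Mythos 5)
Your proposal is correct and follows essentially the same route as the paper's proof: the same comparison of $\Phi_{\varepsilon_n}$ at the regularized minimizer and at a fixed solution of \eqref{eq:qrot-dual-objective} to obtain the uniform bound \eqref{eq:alpharegbound}, weak lower semicontinuity of the convex functional $\Phi$ to identify the weak limit as optimal, and re-use of the bound with the weak limit as comparison point to upgrade to strong convergence via norm convergence. Your final step is if anything slightly cleaner than the paper's contradiction argument, but there is no substantive difference in approach.
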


\begin{proof} Let $(\alpha_1^*, \alpha_2^*)\in L^2(\Omega_1)\times L^2(\Omega_2)$ 
denote an arbitrary globally optimal solution of \eqref{eq:qrot-dual-objective} 
(whose existence is guaranteed by Theorem~\ref{thm:minimizers_dualproblem}). Then the optimality of 
$(\alpha_1^*, \alpha_2^*)$ for \eqref{eq:qrot-dual-objective} and of $(\alpha_1^n, \alpha_2^n)$ for \eqref{eq:dualeps}
(with $\varepsilon = \varepsilon_n$) gives
 \begin{equation*}
  \Phi(\alpha_1^*, \alpha_2^*) 
  + \tfrac{\varepsilon_n}{2} \big(\|\alpha_1^n\|_{L^2(\Omega_1)}^2 + \|\alpha_2^n\|_{L^2(\Omega_2)}^2 \big) \leq  \Phi_{\varepsilon_n}(\alpha_1^n, \alpha_2^n)
  \leq \Phi_{\varepsilon_n}(\alpha_1^*, \alpha_2^*)
 \end{equation*}
which implies
\begin{equation}\label{eq:alpharegbound}
 \|\alpha_1^n\|_{L^2(\Omega_1)}^2 + \|\alpha_2^n\|_{L^2(\Omega_2)}^2  
 \leq \|\alpha_1^*\|_{L^2(\Omega_1)}^2 + \|\alpha_2^*\|_{L^2(\Omega_2)}^2.
\end{equation}
Thus, the boundedness of $\{(\alpha_1^n, \alpha_2^n)\}$ in $L^2(\Omega_1)\times L^2(\Omega_2)$. 
This in turn gives the existence of a weak accumulation point as claimed.

Now assume that $(\tilde\alpha_1, \tilde\alpha_2)$ is such a weak accumulation point, i.e.
\begin{equation}\label{eq:alphaweak}
 (\alpha_1^n, \alpha_2^n) \weak (\tilde\alpha_1, \tilde\alpha_2) \quad \text{in } L^2(\Omega_1)\times L^2(\Omega_2)
\end{equation}
(for a subsequence).
Using again the optimality of $(\alpha_1^*, \alpha_2^*)$ and $(\alpha_1^n, \alpha_2^n)$, respectively, we obtain
 \begin{equation}\label{eq:dualobjconv}
 \begin{aligned}
  \Phi(\alpha_1^*, \alpha_2^*) 
  \leq \Phi(\alpha_1^n, \alpha_2^n) 
  \leq \Phi_{\varepsilon_n}(\alpha_1^n, \alpha_2^n) 
  \leq \Phi_{\varepsilon_n}(\alpha_1^*, \alpha_2^*) \to  \Phi(\alpha_1^*, \alpha_2^*).
 \end{aligned}
 \end{equation}
On the other hand, by convexity and weak lower semicontinuity of $\Phi$ we get from \eqref{eq:alphaweak} and \eqref{eq:dualobjconv} that 
 \begin{equation*}
  \Phi(\tilde\alpha_1, \tilde\alpha_2) \leq \liminf_{n\to\infty}  \Phi(\alpha_1^n, \alpha_2^n) 
  = \lim_{n\to\infty} \Phi(\alpha_1^n, \alpha_2^n)  = \Phi(\alpha_1^*, \alpha_2^*),
 \end{equation*}
 which gives in turn the optimality of the weak limit.
 Estimate \eqref{eq:alpharegbound} 
 for the choice $(\alpha_1^*, \alpha_2^*) = (\tilde{\alpha}_1, \tilde{\alpha}_2)$ shows that
 \[ \|\alpha_1^n\|_{L^2(\Omega_1)}^2 + \|\alpha_2^n\|_{L^2(\Omega_2)}^2
    \le \|\tilde{\alpha}_1\|_{L^2(\Omega_1)}^2 + \|\tilde{\alpha}_2\|_{L^2(\Omega_2)}^2 \]
 and thus, we have 
 \[ \liminf_{n\to\infty} \|\alpha_1^n\|_{L^2(\Omega_1)}^2 + \|\alpha_2^n\|_{L^2(\Omega_2)}^2 \le \|\tilde{\alpha}_1\|_{L^2(\Omega_1)}^2 + \|\tilde{\alpha}_2\|_{L^2(\Omega_2)}^2, \]
 but  $(\alpha_1^n, \alpha_2^n) \nrightarrow (\tilde{\alpha}_1, \tilde{\alpha}_2)$ would imply
 \[ \|\tilde{\alpha}_1\|_{L^2(\Omega_1)}^2 + \|\tilde{\alpha}_2\|_{L^2(\Omega_2)}^2 
 < \liminf_{n\to\infty} \|\alpha_1^n\|_{L^2(\Omega_1)}^2 + \|\alpha_2^n\|_{L^2(\Omega_2)}^2 \] 
 and consequently, we have 
 $(\alpha_1^n, \alpha_2^n) \to (\tilde{\alpha}_1, \tilde{\alpha}_2)$ in $L^2(\Omega_1) \times L^2(\Omega_2)$.
 \qed
\end{proof}

\begin{theorem}
 Let $\{\varepsilon_n\} \subset \RR^+$ be a sequence converging to zero and denote the solutions of \eqref{eq:dualeps}
 with $\varepsilon = \varepsilon_n$ again by $(\alpha_1^n, \alpha_2^n) \in L^2(\Omega_1)\times L^2(\Omega_2)$.
 Moreover, define
 \begin{equation}\label{eq:pindef}
  \pi_n \coloneqq \tfrac{1}{\gamma} (\alpha_1^n \oplus \alpha_2^n - c)_+.
 \end{equation}
 Then $\pi_n$ converges strongly in $L^2(\Omega)$ to the unique solution of \eqref{qrot-cont}.
\end{theorem}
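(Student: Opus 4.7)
The plan is to combine the strong convergence of the dual iterates with continuity of the reconstruction map $(\alpha_1,\alpha_2) \mapsto \tfrac{1}{\gamma}(\alpha_1 \oplus \alpha_2 - c)_+$, and then invoke uniqueness of the primal solution to pass from subsequential convergence to convergence of the whole sequence.

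First, I would appeal to Proposition~\ref{prop:dualconv}: the sequence $\{(\alpha_1^n,\alpha_2^n)\}$ is bounded in $L^2(\Omega_1)\times L^2(\Omega_2)$, so along any subsequence there is a further subsequence (not relabeled) converging strongly to some weak accumulation point $(\tilde\alpha_1,\tilde\alpha_2)$, which is a solution of the original dual problem \eqref{eq:qrot-dual-objective}. Strong convergence is crucial here and is what makes Proposition~\ref{prop:dualconv} the key input; without it one would only get weak convergence of $\alpha_1^n\oplus\alpha_2^n$, which does not pass through the nonlinear positive-part operation.

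Second, I would establish continuity of the reconstruction. The map $\oplus:L^2(\Omega_1)\times L^2(\Omega_2)\to L^2(\Omega)$ is bounded and linear by the elementary estimate $\|v_1\oplus v_2\|_{L^2(\Omega)}\le |\Omega_2|^{1/2}\|v_1\|_{L^2(\Omega_1)}+|\Omega_1|^{1/2}\|v_2\|_{L^2(\Omega_2)}$, and the positive-part map $u\mapsto u_+$ is $1$-Lipschitz from $L^2(\Omega)$ into itself. Composing these with the shift by $c\in L^2(\Omega)$ shows that $\pi_n = \tfrac{1}{\gamma}(\alpha_1^n\oplus\alpha_2^n-c)_+ \to \pi^*:=\tfrac{1}{\gamma}(\tilde\alpha_1\oplus\tilde\alpha_2-c)_+$ strongly in $L^2(\Omega)$ along the chosen subsequence.

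Third, I would identify $\pi^*$ as the unique primal solution. Since $(\tilde\alpha_1,\tilde\alpha_2)$ solves \eqref{eq:qrot-dual-objective}, Theorem~\ref{thm:minimizers_dualproblem} guarantees that $\pi^*$ satisfies the marginal constraints \eqref{eq:kkt-qrot-ssn-2}--\eqref{eq:kkt-qrot-ssn-3}, and $\pi^*\ge 0$ is immediate from the definition. Hence $\pi^*\in\calF$ and the optimality system of Theorem~\ref{thm:optimality-ssn} is satisfied, so $\pi^*$ is the (unique, by strict convexity of \eqref{qrot-cont}) solution of the primal problem. Because this limit is independent of the particular subsequence extracted, a standard subsequence--subsequence argument upgrades the convergence to the full sequence $\pi_n\to\pi^*$ in $L^2(\Omega)$.

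The proof is essentially a bookkeeping exercise once the heavy lifting of Proposition~\ref{prop:dualconv} and Theorems~\ref{thm:minimizers_dualproblem}--\ref{thm:optimality-ssn} is in hand; the only mildly delicate point is that the dual solution is not unique, so one cannot directly conclude convergence of $(\alpha_1^n,\alpha_2^n)$ itself and must argue via the uniqueness of $\pi^*$ to conclude convergence of the whole sequence $\{\pi_n\}$.
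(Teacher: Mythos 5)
Your proof is correct, but it follows a genuinely different route from the paper's. The paper works directly on the primal side: it only uses the boundedness estimate \eqref{eq:alpharegbound} to extract a weakly convergent subsequence $\pi_n \weak \tilde\pi$ in $L^2(\Omega)$, verifies feasibility of $\tilde\pi$ by passing to the limit in the perturbed optimality conditions \eqref{eq:noceps1}--\eqref{eq:noceps2}, and then establishes optimality through the duality identity $\Phi_{\varepsilon_n}(\alpha_1^n,\alpha_2^n) = -\gamma E_\gamma(\pi_n) - \tfrac{\varepsilon_n}{2}(\|\alpha_1^n\|^2+\|\alpha_2^n\|^2)$ combined with weak lower semicontinuity of $E_\gamma$; strong convergence is then recovered at the end from norm convergence plus weak convergence. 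You instead exploit the full strength of Proposition~\ref{prop:dualconv} (in particular the assertion that every weak accumulation point of the duals is a \emph{strong} one), push the strong convergence through the Lipschitz reconstruction map $(\alpha_1,\alpha_2)\mapsto\gamma^{-1}(\alpha_1\oplus\alpha_2-c)_+$, and identify the limit via the optimality system of Theorem~\ref{thm:optimality-ssn}. Your route is shorter and delivers strong convergence immediately, at the price of leaning on the strong-accumulation part of Proposition~\ref{prop:dualconv}; the paper's route needs only boundedness of the duals and, as a by-product, exhibits convergence of the primal objective values $E_\gamma(\pi_n)\to E_\gamma(\pi^*)$. One small point you should make explicit: Theorem~\ref{thm:minimizers_dualproblem} is stated for solutions of the auxiliary problem \eqref{eq:dualmassprob}, whereas your accumulation point $(\tilde\alpha_1,\tilde\alpha_2)$ is a priori only a solution of \eqref{eq:qrot-dual-objective}. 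This is easily repaired --- either note that the optimal values of \eqref{eq:qrot-dual-objective} and \eqref{eq:dualmassprob} coincide (so every solution of the former solves the latter), or observe directly that \eqref{eq:kkt-qrot-ssn-2}--\eqref{eq:kkt-qrot-ssn-3} are the first-order conditions of the smooth unconstrained convex problem \eqref{eq:qrot-dual-objective} and hence hold at every minimizer --- but as written the appeal to Theorem~\ref{thm:minimizers_dualproblem} is slightly off target.
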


\begin{proof}
 From \eqref{eq:alpharegbound}, we know that $\{(\alpha_1^n, \alpha_2^n)\}$ is bounded and hence,
 $\{\pi_n\}$ is bounded in $L^2(\Omega)$. Thus, 
 \begin{equation}\label{eq:pinweak}
  \pi_n \weak \tilde\pi \quad \text{in } L^2(\Omega)
 \end{equation}
 for some subsequence.
 Now we show that $\tilde\pi$ is the optimal for \eqref{qrot-cont}. Weak closedness of 
 $\{\pi \in L^2(\Omega) : \pi(x_1, x_2) \geq 0 \text{ a.e.~in }\Omega\}$
 implies $\tilde\pi\geq 0$. 
 Integrating the  first-order optimality conditions for \eqref{eq:dualeps}
 \begin{alignat}{3}
  \int_{\Omega_2} (\alpha_1^n \oplus \alpha_2^n - c)_+ \,\dd\lambda_2  + \varepsilon \,\alpha_1^n &= \gamma\,\mu_1 
  & \quad  & \text{$\lambda_1$-a.e.~in } \Omega_1 \label{eq:noceps1}\\
  \int_{\Omega_1} (\alpha_1^n \oplus \alpha_2^n - c)_+ \,\dd\lambda_1  + \varepsilon \,\alpha_2^n &= \gamma\,\mu_2
  & & \text{$\lambda_2$-a.e.~in } \Omega_2. \label{eq:noceps2}
 \end{alignat}
 against some $\varphi_1 \in C^\infty_c(\Omega_1)$, inserting the definition of $\pi_n$,  and integrating over $\Omega_1$ yields
 \begin{equation*}
  \int_{\Omega_1} \int_{\Omega_2} \pi_n\,\dd\lambda_2 \,\varphi_1\,\dd\lambda_1 
  = \int_{\Omega_1} \mu_1\,\varphi_1\,\dd\lambda_1 - \frac{\varepsilon_n}{\gamma} \int_{\Omega_1} \alpha_1^n\,\varphi_1\,\dd\lambda_1
 \end{equation*}
 Passing to the limit we obtain
 \begin{equation*}
  \int_{\Omega_1} \int_{\Omega_2} \tilde \pi \,\dd\lambda_2 \,\varphi_1 \,\dd\lambda_1 = \int_{\Omega_1} \mu_1\,\varphi_1 \,\dd\lambda_1,
 \end{equation*}
 and thus, $\tilde\pi$ satisfies the first equality constraint in \eqref{qrot-cont}. 
 The second equality constraint can be verified analogously.
 
 To show optimality of $\tilde\pi$, we test the optimality conditions in \eqref{eq:noceps1} and \eqref{eq:noceps2} with $\alpha_1^n$ and $\alpha_2^n$, respectively, and get
 \begin{equation*}
 \begin{aligned}
   \Phi_{\varepsilon_n} (\alpha_1^n, \alpha_2^n)
   &= \tfrac{\gamma^2}{2} \|\pi_n\|_{L^2(\Omega)}^2 - \gamma \int_\Omega \pi_n (\alpha_1^n \oplus \alpha_2^n) \,\dd \lambda 
   - \tfrac{\varepsilon_n}{2} \|\alpha_1^n\|_{L^2(\Omega_1)}^2 - \tfrac{\varepsilon_n}{2} \|\alpha_2^n\|_{L^2(\Omega_2)}^2\\
   &= - \tfrac{\gamma^2}{2} \|\pi_n\|_{L^2(\Omega)}^2 - \gamma \int_\Omega c\,\pi_n\,\dd\lambda 
  - \tfrac{\varepsilon_n}{2} \|\alpha_1^n\|_{L^2(\Omega_1)}^2 - \tfrac{\varepsilon_n}{2} \|\alpha_2^n\|_{L^2(\Omega_2)}^2 \\ 
   & = - \gamma E_\gamma(\pi_n) - \tfrac{\varepsilon_n}{2} \|\alpha_1^n\|_{L^2(\Omega_1)}^2 - \tfrac{\varepsilon_n}{2} \|\alpha_2^n\|_{L^2(\Omega_2)}^2,
 \end{aligned}
 \end{equation*}
 where $E_\gamma$ is the primal objective from \eqref{eq:primobj}. Similarly, we get
 \begin{equation*}
   \Phi(\alpha_1^*, \alpha_2^*) = - \gamma\, E_\gamma (\pi^*),
 \end{equation*}
 where $\pi^*\in L^2(\Omega)$ is the unique solution of \eqref{qrot-cont} and $(\alpha_1^*, \alpha_2^*)\in L^2(\Omega_1) \times L^2(\Omega_2)$ 
 solves the dual problem \eqref{eq:qrot-dual-objective}. Now, putting everything so far together, we obtain
 \begin{equation*}
 \begin{aligned}
  \lim_{n\to\infty} E_\gamma(\pi_n) &= 
  \lim_{n\to\infty} \Big(- \tfrac{1}{\gamma} \,\Phi_{\varepsilon_n} (\alpha_1^n, \alpha_2^n) 
  - \tfrac{\varepsilon_n}{2\gamma} \|\alpha_1^n\|_{L^2(\Omega_1)}^2 - \tfrac{\varepsilon_n}{2\gamma} \|\alpha_2^n\|_{L^2(\Omega_2)}^2\Big)\\
  &= - \tfrac{1}{\gamma} \,\Phi (\alpha_1^*, \alpha_2^*) = E_\gamma(\pi^*).
 \end{aligned}  
 \end{equation*}
 On the other hand, $E_\gamma$ is  weakly lower semicontinuous, and therefore
 \begin{equation*}
  E_\gamma(\tilde\pi) \leq \liminf_{n\to\infty} E_\gamma(\pi_n) = E_\gamma(\pi^*).
 \end{equation*}
 This gives the optimality of $\tilde\pi$ and by strict convexity also uniqueness, i.e. $\tilde\pi = \pi^*$. Thus, the weak limit is unique and a well known argument by contradiction therefore
 implies the weak convergence of the whole sequence $\{\pi_n\}$ to $\pi^*$. 
 Finally, strong convergence follows from a standard argument.
 \qed
\end{proof}

\subsection{The discrete dual problem}
\label{sec:discrete-dual}

We show a simple discretization of the quadratically
regularized optimal transport problem~\eqref{qrot-cont} by piecewise constant approximation in
Appendix~\ref{sec:discretization}. To keep the notation concise, we state the corresponding discrete optimal transport problem and  illustrate the duality already here. This will be the basis of our
algorithms we derive in Section~\ref{sec:algorithms}. A discrete version of the
continuous problem~\eqref{qrot-cont} is the finite-dimensional problem 
\begin{equation}
  \label{eq:qrot-discrete}
  \min_{\pi\in\RR^{M\times N}}\scp{\pi}{c} + \tfrac\gamma2\norm{\pi}_{F}^{2}\quad\text{s.t.}\quad\pi^{T}\one_{M} = \mu,\ \pi\one_{N} = \nu,\ \pi\geq 0
\end{equation}
where $\one_{N}\in\RR^{N}$ denotes the vector of all ones,
$\mu\in\RR^{N}$, $\nu\in\RR^{M}$ denote the discretized
marginals with
$\sum_{j=1}^{N}\mu_{j} = \sum_{i=1}^{M}\nu_{j}$, and
$c\in\RR^{M\times N}$ denotes the discretized cost. Note that we slightly changed the notation from $\mu_{1}$ and $\mu_{2}$ to $\mu$ and $\nu$, respectively. For the discrete
form of the optimality system~\eqref{eq:kkt-qrot-ssn}  we further replace the Lagrange multipliers $\alpha_{1}$ and $\alpha_{2}$ by $\alpha$
and $\beta$, respectively, and get
\begin{subequations}
  \label{eq:kkt-qrot-ssn-discrete}
    \begin{align}
      \pi & = \tfrac1\gamma\left(\alpha\oplus\beta-c\right)_{+}\label{eq:kkt-qrot-ssn-discrete-1}\\
      \sum_{i=1}^{M}\left(\alpha_{i}+\beta_{j}-c_{ij}\right)_{+}  & =\gamma\mu_{j},\ j=1,\dots,N\label{eq:kkt-qrot-ssn-discrete-2}\\
      \sum_{j=1}^{N}\left(\alpha_{i}+\beta_{j}-c_{ij}\right)_{+}  & =\gamma\nu_{i},\ i=1,\dots,M\label{eq:kkt-qrot-ssn-discrete-3}
    \end{align}
\end{subequations}
where $\alpha\in\RR^{M}$, $\beta\in\RR^{N}$ and
$(\alpha\oplus\beta)_{i,j} = \alpha_{i} + \beta_{j}$ is the ``outer
sum''. The discrete counterpart of $\Phi$ from~\eqref{eq:qrot-dual-objective} is
\[
\Phi(\alpha,\beta) = \tfrac12\norm{(\alpha\oplus\beta - c)_{+}}_{F}^{2} - \gamma\scp{\nu}{\alpha} - \gamma\scp{\mu}{\beta}
\]
where $\norm{\cdot}_{F}$ denotes the Frobenius norm.

We write the optimality condition~\eqref{eq:kkt-qrot-ssn-discrete-2}-\eqref{eq:kkt-qrot-ssn-discrete-3} as a non-smooth equation $F(\alpha,\beta) = 0$ in $\RR^{M+N}$ with
\begin{equation}\label{eq:opt-cond-F}
  F(\alpha,\beta) =
  \begin{pmatrix} F_{1}(\alpha,\beta)\\ F_{2}(\alpha,\beta)
  \end{pmatrix} =
  \begin{pmatrix}
    (\sum_{j=1}^{N}\left(\alpha_{i}+\beta_{j}-c_{ij}\right)_{+} -
    \gamma\nu_{i})_{i=1,\dots,M}\\
    (\sum_{i=1}^{M}\left(\alpha_{i}+\beta_{j}-c_{ij}\right)_{+}
    -\gamma\mu_{j})_{j=1,\dots,N}
  \end{pmatrix}
\end{equation}
(note that $F_{1} = \partial_{\alpha}\Phi$ and
$F_{2} = \partial_{\beta}\Phi$).
Since $F$ is the composition of Lipschitz continuous and semismooth functions, we have the following result 
(for the chain rule for semismooth functions, see e.g.\ \cite[Thm. 2.10]{OPTpdecon}): 
\begin{lemma}\label{lem:lipschitz-gradient-Phi}
  The function $F$ (and thus, the gradient of $\Phi$) is (globally) Lipschitz continuous and semismooth.
\end{lemma}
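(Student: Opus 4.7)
The plan is to build the proof from the scalar positive-part function $t \mapsto t_+ = \max\{t,0\}$ and then propagate its properties through the finitely many algebraic operations appearing in \eqref{eq:opt-cond-F}. Global Lipschitz continuity of $(\cdot)_+$ with constant $1$ is immediate from the elementary inequality $|a_+ - b_+| \le |a-b|$. Semismoothness follows from its piecewise affine structure: $t_+$ is smooth on $\{t>0\}$ and on $\{t<0\}$, and the only point where differentiability fails is $t=0$, at which semismoothness is verified directly from the definition (equivalently, any piecewise affine function is semismooth).

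Next I would observe that each summand $(\alpha_i + \beta_j - c_{ij})_+$ in \eqref{eq:opt-cond-F} is the composition of $(\cdot)_+$ with the $C^\infty$ affine map $(\alpha,\beta) \mapsto \alpha_i + \beta_j - c_{ij}$, which is trivially Lipschitz continuous and semismooth. Applying the chain rule for semismooth functions, \cite[Thm.~2.10]{OPTpdecon} as cited in the statement, yields that each such composition is again globally Lipschitz continuous and semismooth. Summing finitely many of these compositions (over $j$ for the first block of $F$, over $i$ for the second) and subtracting the constants $\gamma \nu_i$ resp.\ $\gamma \mu_j$ preserves both properties, so every component of $F$, and hence $F$ itself, is globally Lipschitz continuous and semismooth.

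For the gradient assertion, I would simply invoke the observation made after \eqref{eq:opt-cond-F}, namely $F_1 = \partial_\alpha \Phi$ and $F_2 = \partial_\beta \Phi$, so that $\nabla \Phi = F$ and the two claims coincide. No step constitutes a genuine obstacle; the only mild subtlety worth pointing out is that one must apply the chain rule in the version compatible with a nonsmooth inner argument, which is precisely what the cited theorem provides.
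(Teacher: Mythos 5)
Your proposal is correct and follows exactly the paper's (very brief) argument: $F$ is a finite sum of compositions of the Lipschitz, semismooth scalar function $(\cdot)_+$ with smooth affine maps, so the chain rule for semismooth functions \cite[Thm.~2.10]{OPTpdecon} gives the claim, and the identification $\nabla\Phi = F$ is as noted after \eqref{eq:opt-cond-F}. You have simply spelled out the details the paper leaves implicit.
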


\section{Algorithms}
\label{sec:algorithms}

The optimality system~\eqref{eq:kkt-qrot-ssn-discrete-2},~\eqref{eq:kkt-qrot-ssn-discrete-3} for the smooth and convex
problem~\eqref{eq:qrot-dual-objective} can be solved by different
methods. In~\cite{blondel2018smoothOT} the authors propose to use a
generic L-BFGS solver and also derive an alternating minimization
scheme, which is similar to the non-linear Gauss-Seidel method in the next
section, but differs slightly in the numerical realization
and~\cite{roberts2017gini} also uses an off-the-shelf solver. Here we
propose methods that exploit the special structure of the
optimality system: A non-linear Gauss-Seidel method and a semismooth
Newton method.

\subsection{Non-linear Gauss-Seidel}
\label{sec:nl-gs}

The method in this section is similar to the one described in the
Appendix of~\cite{blondel2018smoothOT}, but we describe it here for
the sake of completeness. A close look at the optimality system
\begin{subequations}
\begin{alignat}{3}
  \sum_{j=1}^{N}(\alpha_{i}+\beta_{j}-c_{ij})_{+} &=
  \gamma\nu_{i}, & \quad & i=1,\dots,M. \label{eq:opt-cond-minus}\\
  \sum_{i=1}^{M}(\alpha_{i}+\beta_{j}-c_{ij})_{+} &=
  \gamma\mu_{j}, & \quad & j=1,\dots,N \label{eq:opt-cond-plus}
\end{alignat}
\end{subequations}
shows that we can solve all $M$ equations in~\eqref{eq:opt-cond-minus}
for the $\alpha_{i}$ in parallel (for fixed $\beta$) since the $i$th
equation depends on $\alpha_{i}$ only. Similarly, all $N$ equations
in~\eqref{eq:opt-cond-plus} can be solved for the $\beta_{j}$ if
$\alpha$ is fixed. Hence, we can perform a non-linear Gauss-Seidel
method for these non-smooth equations (also known as alternating minimization, nonlinear SOR or coordinate descent method for $\Phi$~\cite{chen2002convergence,wright2015coordinatedescent}), i.e. alternatingly solving the
equations~\eqref{eq:opt-cond-minus} for $\alpha$ (for fixed $\beta$)
and then the equations~\eqref{eq:opt-cond-plus} for $\beta$ (for fixed
$\alpha$). The whole method is stated in Algorithm~\ref{alg:qrot_gs}.
Since $\Phi$ is convex with Lipschitz continuous gradient (cf.~Lemma~\ref{lem:lipschitz-gradient-Phi}) the convergence of the algorithm follows from results in~\cite{bertsekas2016nlp}.

\begin{algorithm}
\caption{Non-linear Gauss-Seidel for quadratically regularized optimal transport}\label{alg:qrot_gs}
\begin{algorithmic}
\State Initialize: $\beta^{0}\in\RR^{N}$, set $k=0$
\Repeat
\State Set $\alpha^{k+1}$ to be the solution of~\eqref{eq:opt-cond-minus} with $\beta = \beta^{k}$.
\State Set $\beta^{k+1}$ to be the solution of~\eqref{eq:opt-cond-plus} with $\alpha = \alpha^{k+1}$.
\State $k \gets k+1$
\Until{some stopping criterion}
\end{algorithmic}
\end{algorithm}

Each equation for an $\alpha_{i}$ or $\beta_{j}$ is just a single scalar equation for a scalar quantity and the structure of the equation is of the following form: For a given vector $y\in\RR^{n}$ and right hand side $b\in\RR$, solve
\begin{equation}
  \label{eq:max-equation}
  f(x) \coloneqq \sum_{j=1}^{n}(x-y_{j})_{+} = b.
\end{equation}
Of course, one can solve this problem by bisection, but here are two
other, more efficient methods to solve equations of the type~\eqref{eq:max-equation}:
\begin{description}
\item[\textbf{Direct search.}]
  If we
  denote by $y_{[j]}$ the $j$-th smallest entry of $y$ (i.e. we sort
  $y$ in an ascending way), we get that
  \[
  \begin{split}
    f(x) & = \sum_{j=1}^{n}(x-y_{[j]})_{+}\\
    & =
    \begin{cases}
      0, & x\leq y_{[1]}\\
      kx - \sum_{j=1}^{k}y_{[j]}, & y_{[k]}\leq x\leq y_{[k+1]},\ k=1,\dots,n-1\\
      nx - \sum_{j=1}^{n}y_{[j]}, & x\geq y_{[n]}.
    \end{cases}
  \end{split}
  \]
  To obtain the solution of~\eqref{eq:max-equation} we evaluate $f$ at
  the break points $y_{[j]}$ until we find the interval
  $[y_{[k]},y_{[k+1]}[$ in which the solution lies (by finding $k$
  such that $f(y_{[k]})\leq b< f(y_{[k+1]})$), and then setting
  \[
  x = \frac{b +\sum_{j=1}^{k}y_{[j]}}k.
  \]
  The complexity of the method is dominated by the sorting of the vector $y$, its complexity is $\bigO(n\log(n))$.
\item[\textbf{Semismooth Newton.}]  Although $f$ is
  non-smooth, we may perform Newton's method here. The function $f$ is
  piecewise linear and on each interval $]y_{[j]},y_{[j+1]}[$ is has
  the slope $j$ (a simple situation with $n=3$ is shown in
  Figure~\ref{fig:gs-f-ssn}). At the break points we may define $f'(y_{[j]}) = j$ and then we iterate
  \[
  x^{k+1} = x^{k} - \tfrac{f(x^{k})}{f'(x^{k})}.
  \]
  If we start with $x^{0}\geq y^{[n]} = \max_{k}y_{k}$, the method
  will produce a monotonically decreasing sequence which converges
  in at most $n$ steps. Actually, we can initialize the method with
  any $x^{0}$ that is strictly larger than $y_{[1]} = \min_{k}y_{k}$.
  
  Note that we do not need to sort the values of $y_{k}$ to calculate
  the derivative since we have $f'(x) = \#\{i\ :\ x\geq y_{i}\}$. In
  practice, the method usually needs much less iterations than $n$.
\end{description}

\begin{figure}[htb]
  \centering
  \begin{tikzpicture}
    \draw[->] (-2,0) -- (4,0)node[below]{$x$};
    \draw[->] (0,-2) -- (0,2.5)node[right]{$f(x)$};
    \draw (1,0.05) -- (1,-0.05)node[above]{$y_{[1]}$};
    \draw (2,0.05) -- (2,-0.05)node[above]{$y_{[2]}$};
    \draw (2.5,0.05) -- (2.5,-0.05)node[below]{$y_{[3]}$};
    \draw[dashed,gray] (1,0) -- (1,-1.5);
    \draw[dashed,gray] (2,0) -- (2,-0.5);
    \draw[dashed,gray] (2.5,0) -- (2.5,0.5);
    
    \draw[thick] (-1.9,-1.5) -- (1,-1.5) -- (2,-0.5) -- (2.5,0.5) -- (3,2);
    
  \end{tikzpicture}
  \caption{Illustration of the non-smooth function $f$ from~\eqref{eq:max-equation}.}
  \label{fig:gs-f-ssn}
\end{figure}
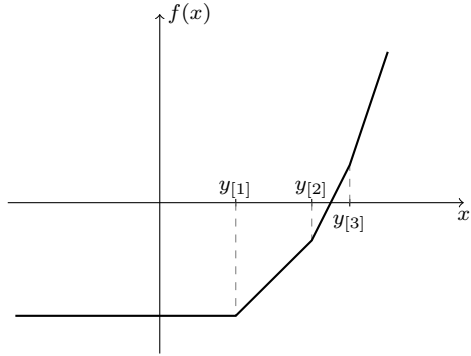

\subsection{Semismooth Newton}
\label{sec:ssn}

As seen in Lemma~\ref{lem:lipschitz-gradient-Phi}, the mapping $F$ is semismooth and hence, we may use a
semismooth Newton method~\cite{chen1997,chen2000semismooth}.

A simple calculation proves the following lemma.
\begin{lemma}\label{lem:newton-derivative}
  A Newton derivative of $F$ from~\eqref{eq:opt-cond-F} at $(\alpha,\beta)$ is given by
  \[
  G =
  \begin{pmatrix}
    \diag(\sigma\one_{N}) & \sigma\\
    \sigma^{T} & \diag(\sigma^{T}\one_{M})
  \end{pmatrix}\in\RR^{(M+N)\times (M+N)}
  \]
  where $\sigma\in\RR^{M\times N}$ is given by
  \[
  \sigma_{ij} =
  \begin{cases}
    1 & \alpha_{i} + \beta_{j}-c_{ij}\geq 0\\
    0 & \text{otherwise.}
  \end{cases}
  \]
\end{lemma}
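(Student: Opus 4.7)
The plan is to derive the claimed Newton derivative componentwise using the standard calculus of semismooth functions: a sum rule and a chain rule applied to each scalar summand in the components of $F$. Since $F$ is defined entrywise by
\[
F_{1,i}(\alpha,\beta) = \sum_{j=1}^{N} (\alpha_i + \beta_j - c_{ij})_+ - \gamma\nu_i, \qquad
F_{2,j}(\alpha,\beta) = \sum_{i=1}^{M} (\alpha_i + \beta_j - c_{ij})_+ - \gamma\mu_j,
\]
it suffices to compute a Newton derivative of the scalar mapping $(\alpha,\beta) \mapsto (\alpha_i+\beta_j-c_{ij})_+$ and then sum/stack the resulting row vectors.

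First I would recall that the scalar function $t \mapsto t_+ = \max(t,0)$ is Lipschitz and semismooth, with the function $t \mapsto \chi_{\{t \geq 0\}}$ being an admissible Newton derivative (any value in $[0,1]$ is admissible at the kink, so the choice $1$ at $t=0$ is valid and produces the stated $\sigma_{ij}$). Composing with the affine map $(\alpha,\beta) \mapsto \alpha_i+\beta_j - c_{ij}$ via the chain rule (e.g.\ \cite[Thm. 2.10]{OPTpdecon}, as invoked in Lemma~\ref{lem:lipschitz-gradient-Phi}) gives as a Newton derivative of $(\alpha_i+\beta_j-c_{ij})_+$ the row vector whose only nonzero entries are $\sigma_{ij}$ in the $\alpha_i$-slot and $\sigma_{ij}$ in the $\beta_j$-slot.

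Next I would apply the sum rule for Newton derivatives to assemble a Newton derivative of $F_{1,i}$: the $\alpha$-part yields $\sum_j \sigma_{ij}\, e_i^\top = (\sigma\one_N)_i\, e_i^\top$, so the $\alpha$-block of the Jacobian of $F_1$ is the diagonal matrix $\diag(\sigma\one_N)$; the $\beta$-part yields the row $(\sigma_{i1},\dots,\sigma_{iN})$, so the $\beta$-block of $F_1$ is $\sigma$ itself. Repeating for $F_{2,j}$ gives a $\beta$-block $\diag(\sigma^T\one_M)$ and an $\alpha$-block $\sigma^T$. Stacking the two blocks exactly reproduces the $(M+N)\times(M+N)$ matrix $G$ in the statement.

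The only delicate point is verifying that the choice $\sigma_{ij}=1$ at equality $\alpha_i+\beta_j-c_{ij}=0$ yields a bona fide Newton derivative (and not just a formal Jacobian), but this is routine: any measurable selection from the Clarke subdifferential of $(\cdot)_+$ is admissible, and since the sum and chain rules both preserve the Newton derivative property for Lipschitz semismooth compositions with affine inner maps, the assembled matrix $G$ inherits it. No coercivity or smoothness beyond what is guaranteed by Lemma~\ref{lem:lipschitz-gradient-Phi} is needed, and there is no genuine obstacle beyond careful bookkeeping of indices.
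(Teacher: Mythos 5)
Your proposal is correct and carries out exactly the ``simple calculation'' the paper alludes to (the paper gives no written proof beyond that remark): a componentwise application of the chain and sum rules for Newton derivatives of $t\mapsto t_+$ composed with the affine maps $(\alpha,\beta)\mapsto\alpha_i+\beta_j-c_{ij}$, assembled into the four blocks of $G$. The remark that the value $1$ at the kink is an admissible selection is a worthwhile addition, not a deviation.
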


A step of the semismooth Newton method for the solution of $F(\alpha,\beta) = 0$ would consist of
setting
\[
\begin{pmatrix}
  \alpha^{k+1}\\\beta^{k+1}
\end{pmatrix}
=
\begin{pmatrix}
  \alpha^{k}\\\beta^{k}
\end{pmatrix}
-
\begin{pmatrix}
  \delta\alpha^{k}\\\delta\beta^{k}
\end{pmatrix}
\quad\text{where}
\quad
F(\alpha^{k},\beta^{k}) = G
\begin{pmatrix}
  \delta\alpha^{k}\\\delta\beta^{k}
\end{pmatrix}.
\]
However, the next lemma shows, that $G$ has a non-trivial kernel.
\begin{lemma}\label{lem:newton-matrix}
 Let $G$ be the Newton derivative of $F$ at $(\alpha,\beta)$ defined in Lemma~\ref{lem:newton-derivative}.
 Then the following holds true:
 \begin{enumerate}
  \item $G \in \RR^{(M+N)\times (M+N)}$ is symmetric, 
  \item $G$ is positive semi-definite,
  \item $(a,b)\in\Kern(G)$ if and only if $\sigma_{ij}(a_{i}+b_{j}) = 0$ for all $1\leq i\leq M$, $1\leq j\leq N$.
\end{enumerate}  
\end{lemma}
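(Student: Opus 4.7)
The plan is to verify the three claims directly from the block form of $G$, with the kernel characterization flowing naturally from the positive semi-definiteness via a completing-the-square identity.

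First, symmetry is immediate: the two diagonal blocks $\diag(\sigma\one_N)$ and $\diag(\sigma^T\one_M)$ are diagonal, hence symmetric, and the off-diagonal blocks are $\sigma$ and $\sigma^T$, so $G^T = G$.

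For positive semi-definiteness, the key step is a rewriting of the associated quadratic form. Given $(a,b) \in \RR^M \times \RR^N$, I would expand
\[
\begin{pmatrix} a \\ b \end{pmatrix}^T G \begin{pmatrix} a \\ b \end{pmatrix}
= \sum_{i=1}^M a_i^2 \sum_{j=1}^N \sigma_{ij} + 2 \sum_{i,j} \sigma_{ij} a_i b_j + \sum_{j=1}^N b_j^2 \sum_{i=1}^M \sigma_{ij},
\]
and reorganize each single sum as a double sum so that the whole expression collapses to the sum-of-squares identity
\[
\begin{pmatrix} a \\ b \end{pmatrix}^T G \begin{pmatrix} a \\ b \end{pmatrix}
= \sum_{i=1}^M \sum_{j=1}^N \sigma_{ij}\,(a_i + b_j)^2.
\]
Since $\sigma_{ij}\in\{0,1\}$, every summand is non-negative, which gives positive semi-definiteness.

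For the kernel, I would use the general fact that a symmetric positive semi-definite matrix $G$ satisfies $G x = 0$ if and only if $x^T G x = 0$ (one direction is trivial; the other follows from writing $G = L L^T$ via a Cholesky- or spectral-decomposition, so that $x^T G x = \|L^T x\|^2$). Applying this with $x = (a,b)$ and using the identity above, $(a,b) \in \Kern(G)$ is equivalent to
\[
\sum_{i=1}^M \sum_{j=1}^N \sigma_{ij}\,(a_i + b_j)^2 = 0,
\]
which, as a sum of non-negative terms, vanishes iff $\sigma_{ij}(a_i+b_j) = 0$ for all $i,j$ (note $\sigma_{ij}(a_i+b_j)^2 = 0$ and $\sigma_{ij}(a_i+b_j) = 0$ are equivalent because $\sigma_{ij}\in\{0,1\}$).

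There is no real obstacle here; the main thing to get right is bookkeeping when rewriting the single sums $\sum_j \sigma_{ij}$ and $\sum_i \sigma_{ij}$ as full double sums so that the sum-of-squares identity appears cleanly. Everything else is a direct application of standard facts about symmetric PSD matrices.
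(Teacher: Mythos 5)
Your proposal is correct and follows essentially the same route as the paper: symmetry by inspection of the block structure, positive semi-definiteness via the sum-of-squares identity $(a,b)^\top G\,(a,b)=\sum_{i,j}\sigma_{ij}(a_i+b_j)^2$, and the kernel characterization from the vanishing of that non-negative sum. You merely make explicit the standard fact that $Gx=0\iff x^\top Gx=0$ for symmetric positive semi-definite $G$, which the paper leaves implicit.
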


\begin{proof}
 Symmetry of $G$ is clear by construction.
 To see that $G$ is positive semi-definite we calculate
 \begin{equation*}
 \begin{aligned}
  (a,b)^\top G (a,b)
  &= \sum_{j=1}^{N}\sum_{i=1}^M \sigma_{ij }a_i^2 + \sum_{j=1}^N\sum_{i=1}^{M} \sigma_{ij} b_j^2 
  + 2 \sum_{j=1}^{N}\sum_{i=1}^M\sigma_{ij}  a_i b_j\\
  & = \sum_{j=1}^{N}\sum_{i=1}^{M}\sigma_{ij}(a_{i}+b_{j})^{2}\geq 0.
 \end{aligned}
 \end{equation*}
 Due to the non-negativity of $\sigma$, this also shows the last point.\qed
\end{proof}
The third point of the lemma shows that the kernel of $G$ may have a
high dimension, depending on the matrix $\sigma$. Hence we resort to a quasi Newton method where we regularize the Newton step 
arising from the dual problem from Section~\ref{sec:dual-problem} by setting
\[
\begin{pmatrix}
  \alpha^{k+1}\\\beta^{k+1}
\end{pmatrix}
=
\begin{pmatrix}
  \alpha^{k}\\\beta^{k}
\end{pmatrix}
-
\begin{pmatrix}
  \delta\alpha^{k}\\\delta\beta^{k}
\end{pmatrix}
\quad\text{where}
\quad
F(\alpha^{k},\beta^{k}) = (G + \varepsilon I)
\begin{pmatrix}
  \delta\alpha^{k}\\\delta\beta^{k}
\end{pmatrix}
\]
with a small $\varepsilon>0$. By~\cite{chen1997}, the method still
converges, but only a local linear rate is guaranteed. We note that we have not applied the semismooth Newton method to the regularized dual problem 
from Section~\ref{sec:reg-dual}. This would also be possible, but lead not only to the regularized Newton matrix from above but we would also have 
to adapt the objective $F$ in the computation of the update.

Let us make a few remarks on the the regularized Newton step and its numerical treatment.
\begin{itemize}
\item The matrix $\sigma$ (and hence the Newton matrix $G$) is usually
  very sparse. The closer $\alpha$ and $\beta$ are to the optimal
  ones, the closer $(\alpha_{i}+\beta_{j}-c_{ij})_{+}$ is to the
  optimal regularized transport plan $\pi$ and for small $\gamma$ this
  usually very sparse.
\item Since $G$ is positive semi-definite, the regularized step could
  be done by the method of conjugate gradients. However, any linear
  solver that can exploit the sparsity of $G$ can be used.
\end{itemize}
As usual, the regularized semismooth Newton method may not converge
globally. A simple globalization technique is an Armijo linesearch in
the Newton direction. The full method is described in Algorithm~\ref{alg:qrot_ssn}.

\begin{algorithm}
\caption{Globalized and regularized semismooth Newton method quadratically regularized optimal transport}\label{alg:qrot_ssn}
\begin{algorithmic}
\State Initialize: $\alpha^{0}\in\RR^{M}$, $\beta^{0}\in\RR^{N}$, set $k=0$, choose regularization parameter $\varepsilon>0$, Armijo parameters $\theta,\kappa\in]0,1[$, and a tolerance $\tau>0$ 
\Repeat
\State Calculate
\[
P_{ij} = \alpha^{k}_{i}  + \beta^{k}_{j}-c_{ij},\quad\sigma_{ij} =
\begin{cases}
  1 & P_{ij}\geq 0\\
  0 & \text{otherwise}
\end{cases},
\quad\text{and}\quad
\pi_{ij} = \max(P_{ij},0)/\gamma.
\]
\State Calculate $\delta\alpha$ and $\delta\beta$ by solving
\[
\left(
\begin{pmatrix}
  \diag(\sigma\one_{N}) & \sigma\\
  \sigma^{T} & \diag(\sigma^{T}\one_{M})
\end{pmatrix}
+ \varepsilon I\right)
\begin{pmatrix}
  \delta\alpha\\\delta\beta
\end{pmatrix}
= -\gamma
\begin{pmatrix}
  \pi\one_{N} - \nu\\
  \pi^{T}\one_{M} - \mu
\end{pmatrix}
\]
\State Set $t=1$ and compute the directional derivative
\[
d = D_{(\delta\alpha,\delta\beta)}\Phi(\alpha^{k},\beta^{k}) = \gamma\sum_{ij}\pi_{ij}(\delta\alpha_{i}+\delta\beta_{j}) - \gamma(\scp{\delta\alpha}{\nu} + \scp{\delta\beta}{\mu}).
\]
\While {$\Phi(\alpha^{k}+t\delta\alpha,\beta^{k}+t\delta\beta) \geq \Phi(\alpha^{k},\beta^{k}) + t\theta d$}
\State $t\gets \kappa t$
\EndWhile
\State Set $\alpha^{k+1} = \alpha^{k} - t\delta\alpha$, $\beta^{k+1} = \beta^{k}-t\delta\beta$
\State $k \gets k+1$
\Until{$\norm{\pi\one_{N}-\nu}_{\infty},\norm{\pi^{T}\one_{M}-\mu}_{\infty}\leq\tau$}
\end{algorithmic}
\end{algorithm}

\section{Numerical examples}
\label{sec:examples}

\subsection{Illustration of $\gamma\to 0$}
\label{sec:num-gamma-conv}

In our first numerical example we illustrate the how the solutions $\pi^{*}$ of the regularized problem converge for vanishing regularization parameter $\gamma\to 0$. We generate some marginals,
fix a transport cost and compute solutions of the discretized
transport problems~\eqref{eq:qrot-discrete} for a sequence
$\gamma_{n}\to 0$ and illustrate the optimal transport plans (and the
related regularized transport costs). Our marginals are non-negative functions sampled at equidistant points $x_{i}$, $y_{i}$ in the interval $[0,1]$ and we used $M=N=400$ and the cost $c_{ij} = (x_{i}-y_{j})^{2}$ is the squared distance between the sampling points. The results are shown in Figure~\ref{fig:qrot_gamma_to_zero}.
One observes that the optimal transport plans converge to a measure that is singular and is supported on the graph of a monotonically increasing function, exactly as the fundamental theorem of optimal transport~\cite{ambrosio2013user} predicts.

\begin{figure}[htb]
  \centering
  \begin{tabular}{cccc}
  \includegraphics[width=0.23\textwidth]{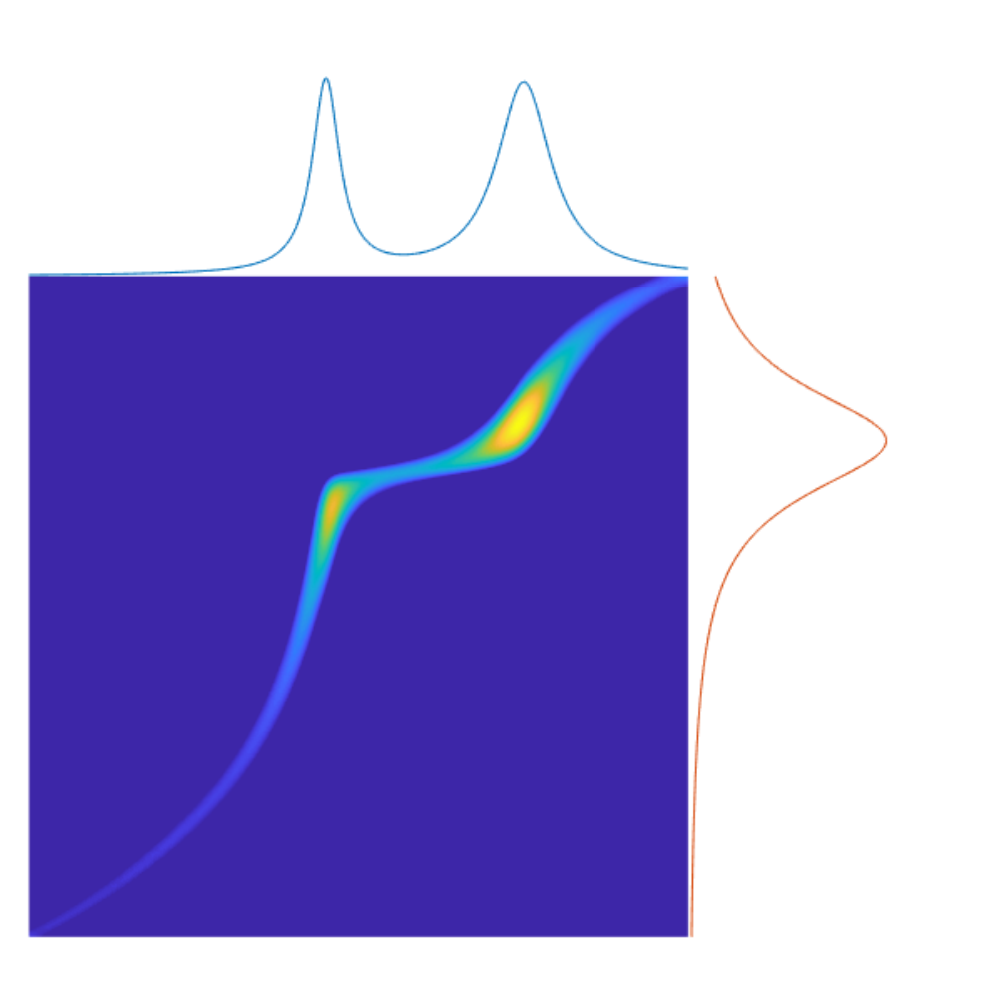}&
  \includegraphics[width=0.23\textwidth]{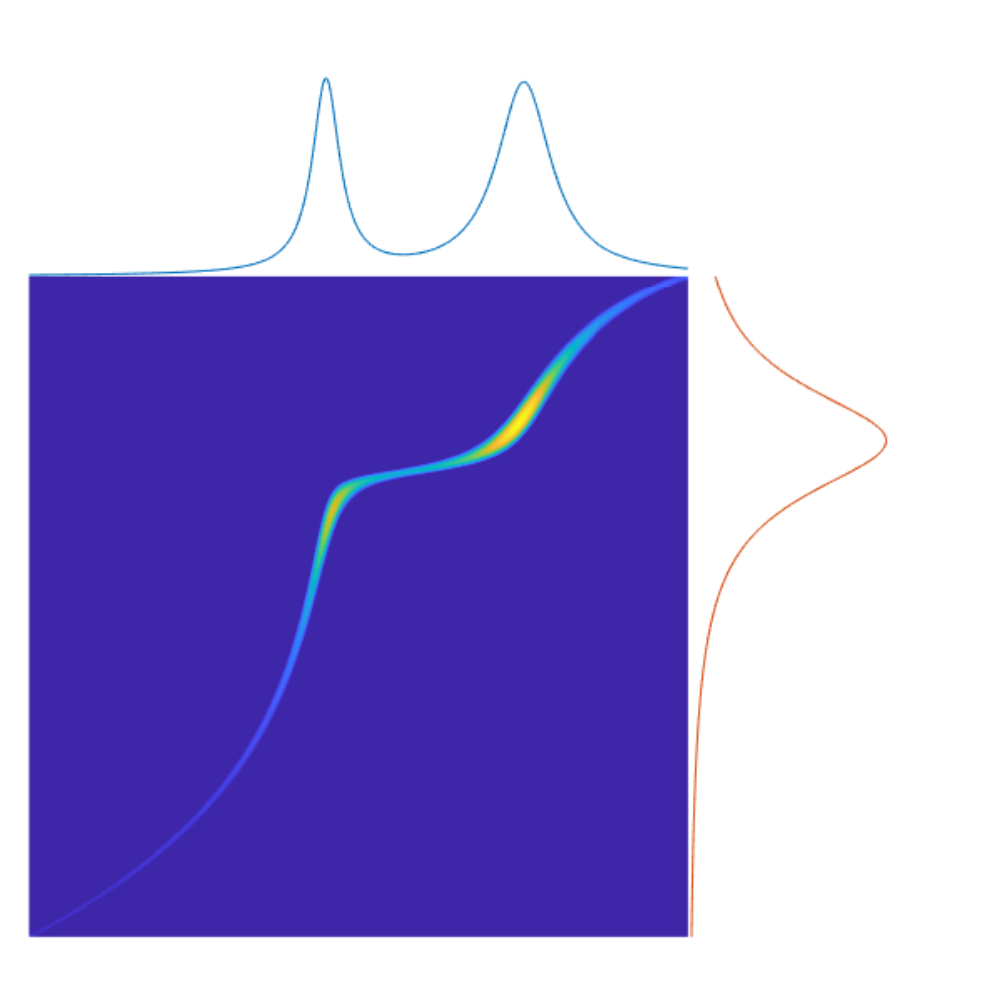}&
  \includegraphics[width=0.23\textwidth]{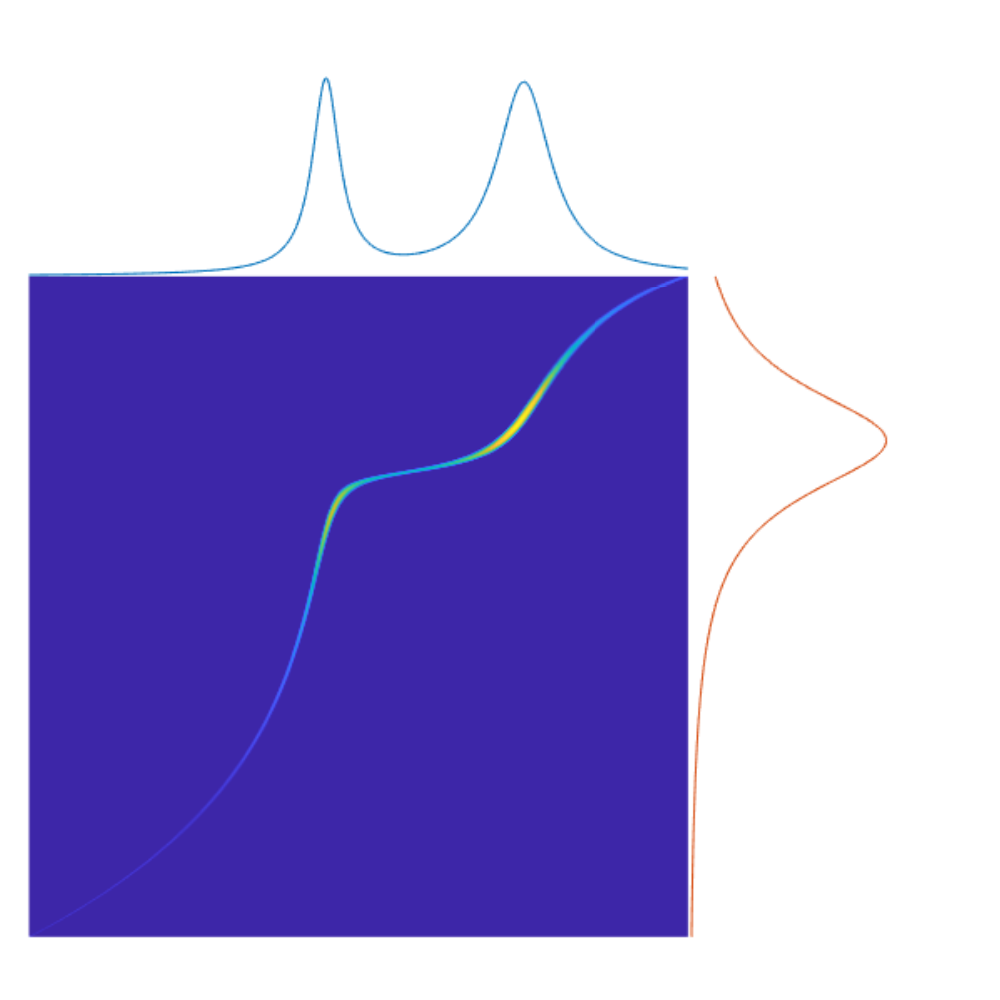}&
  \includegraphics[width=0.23\textwidth]{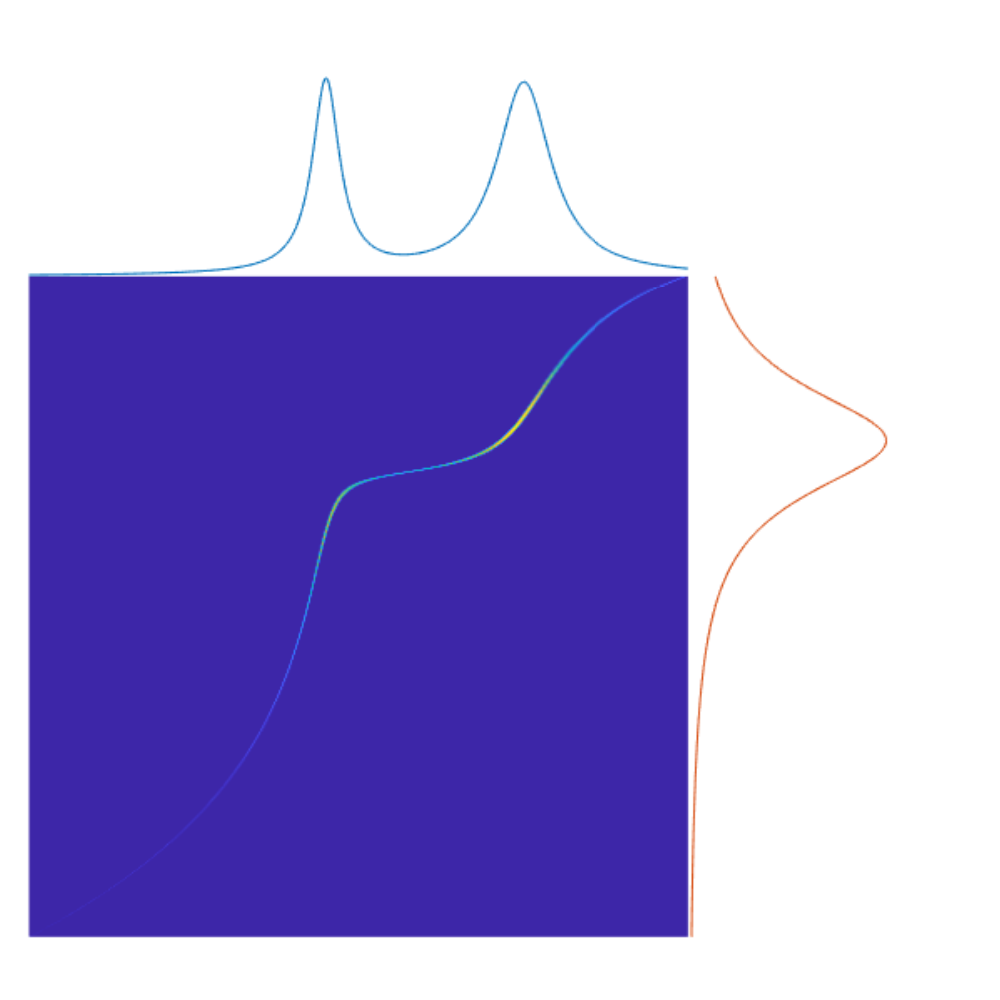}\\
                                                                        $\gamma = 10$ &
                                                                       $\gamma = 1$ &
                                                                        $\gamma = 0.1$&
                                                                                        $\gamma = 0.01$
  \end{tabular}
  \caption{Visualization of transport plans of the quadratically regularized optimal transport problem with $M=N=400$ and quadratic transport cost $c_{ij} = (x_{i}-y_{j})^{2}$.}
  \label{fig:qrot_gamma_to_zero}
\end{figure}

We repeat the same experiment where the cost is the (non-squared) distance $c_{ij} = \abs{x_{i}-y_{j}}$. Here we had to choose larger regularization parameters as it turned out that values similar to Figure~\ref{fig:qrot_gamma_to_zero} would lead to almost undistinguishable results. The results are shown in Figure~\ref{fig:qrot_gamma_to_zero_abs}. Note the different structure of the transport plan (which is again in agreement with the predicted results from the fundamental theorem of optimal transport). In Figure~\ref{fig:qrot_gamma_to_zero_sqrt} we show the results for the concave but increasing cost $c_{ij} = \sqrt{\abs{x_{i}-y_{j}}}$ and again observe the expected effect that a concave transport cost encouraged that as much mass as possible stays in place (as can be seen by the concentration of mass along the diagonal of the transport plan).

\begin{figure}[htb]
  \centering
  \begin{tabular}{cccc}
  \includegraphics[width=0.23\textwidth]{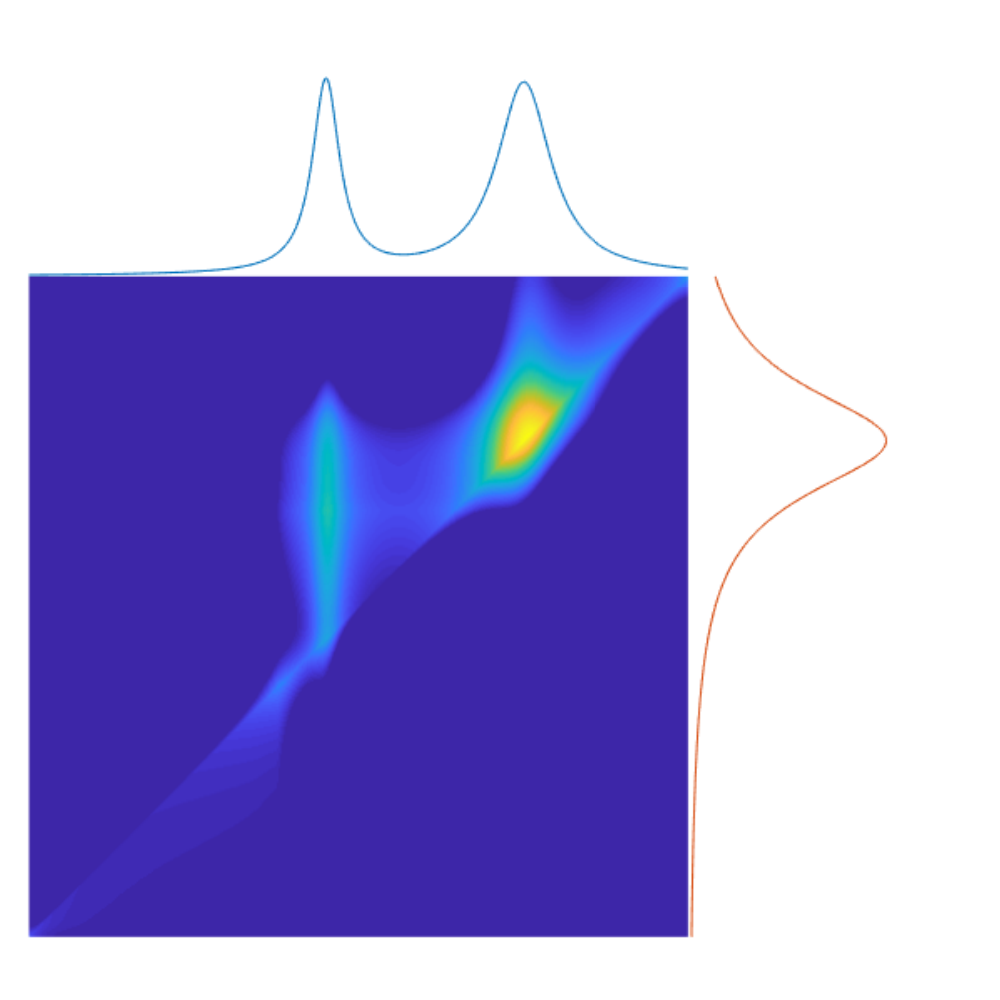}&
  \includegraphics[width=0.23\textwidth]{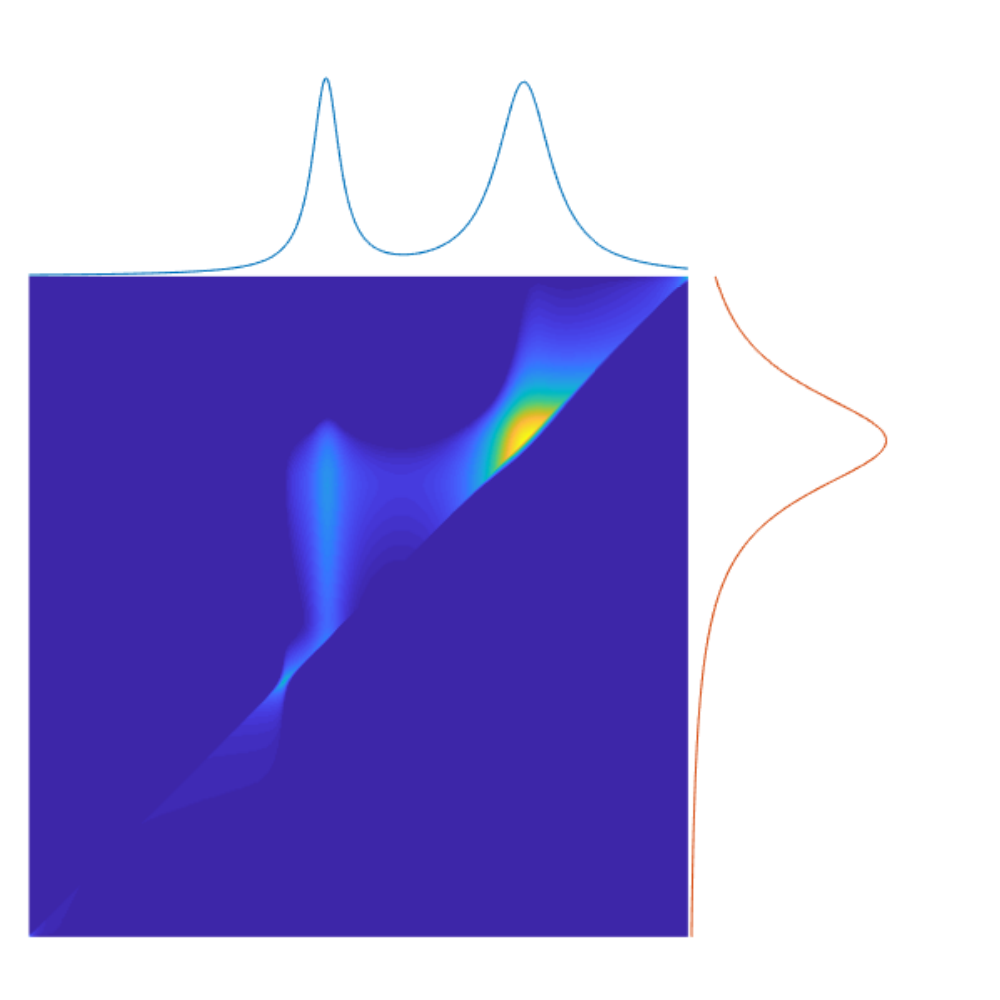}&
  \includegraphics[width=0.23\textwidth]{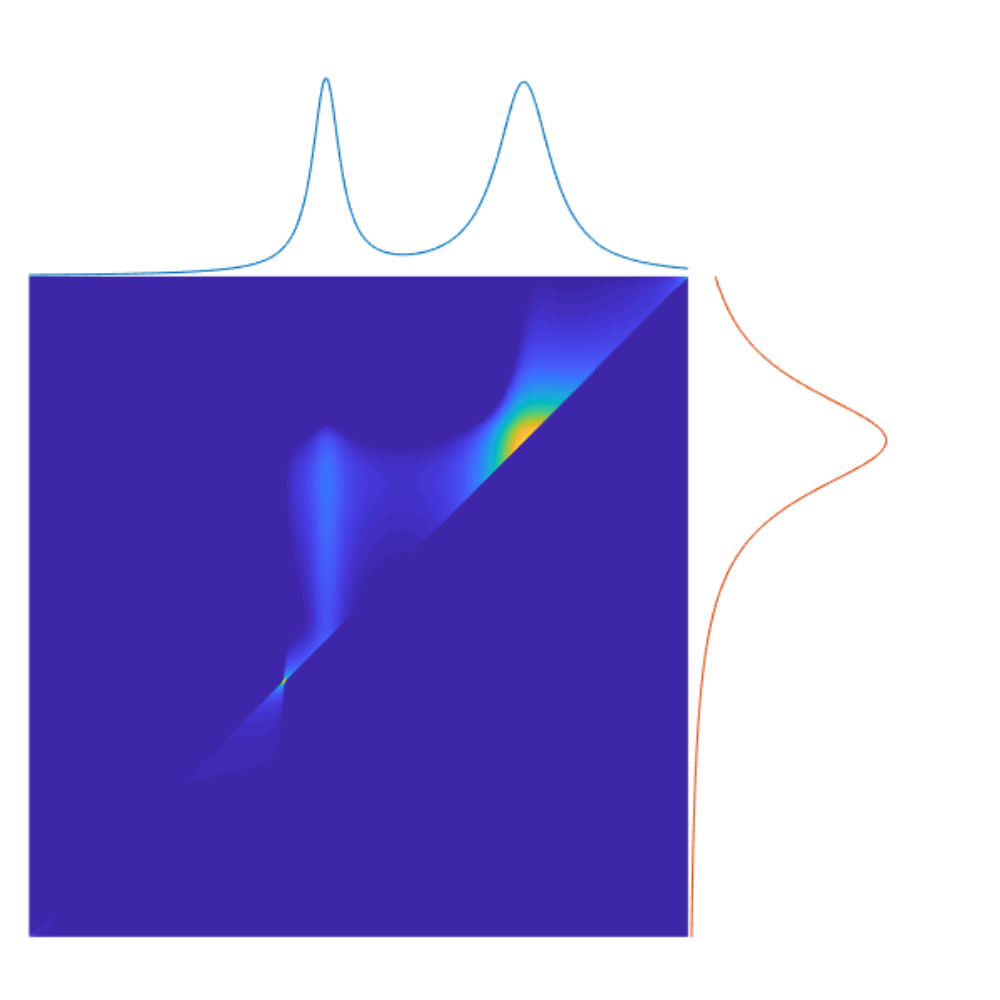}&
  \includegraphics[width=0.23\textwidth]{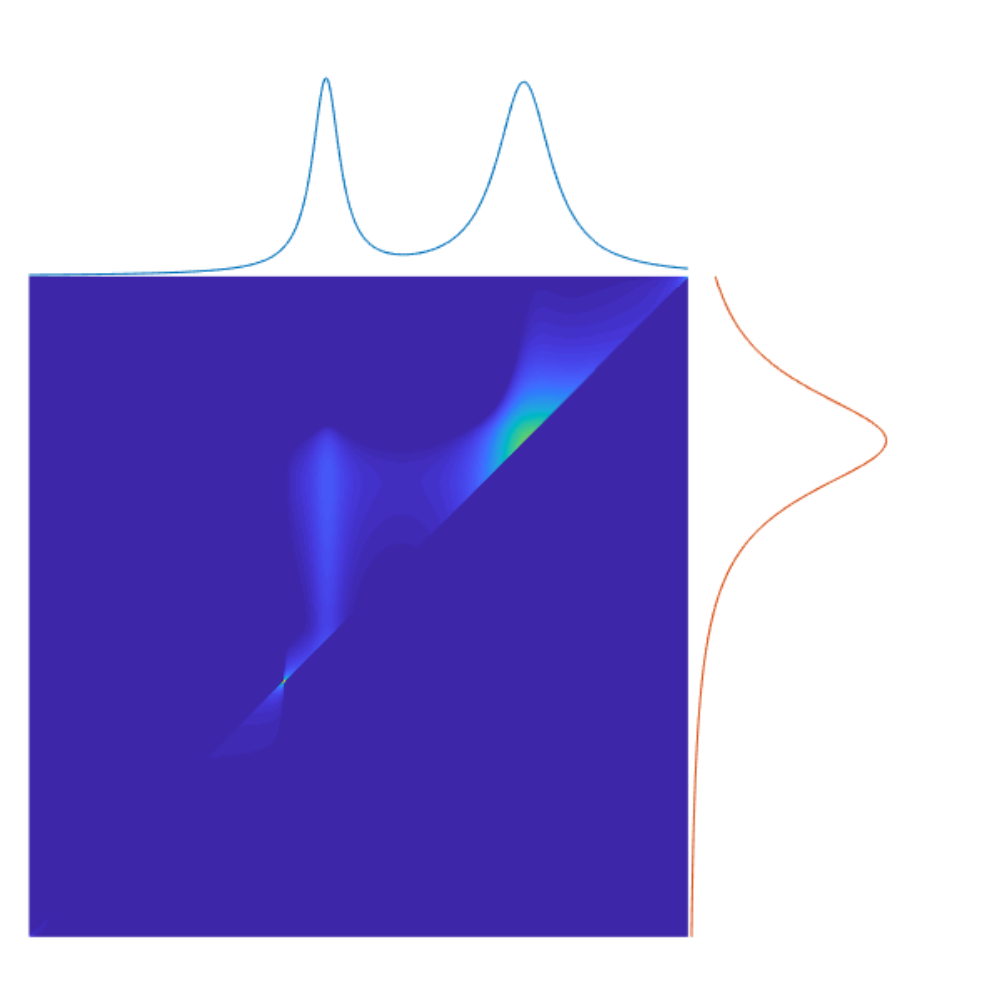}\\
                                                                        $\gamma = 1,000$ &
                                                                       $\gamma = 100$ &
                                                                        $\gamma = 10$&
                                                                                        $\gamma = 1$
  \end{tabular}
  \caption{Visualization of transport plans of the quadratically regularized optimal transport problem with $M=N=400$ and metric transport cost $c_{ij} =\abs{x_{i}-y_{j}}$.}
  \label{fig:qrot_gamma_to_zero_abs}
\end{figure}

\begin{figure}[htb]
  \centering
  \begin{tabular}{cccc}
  \includegraphics[width=0.23\textwidth]{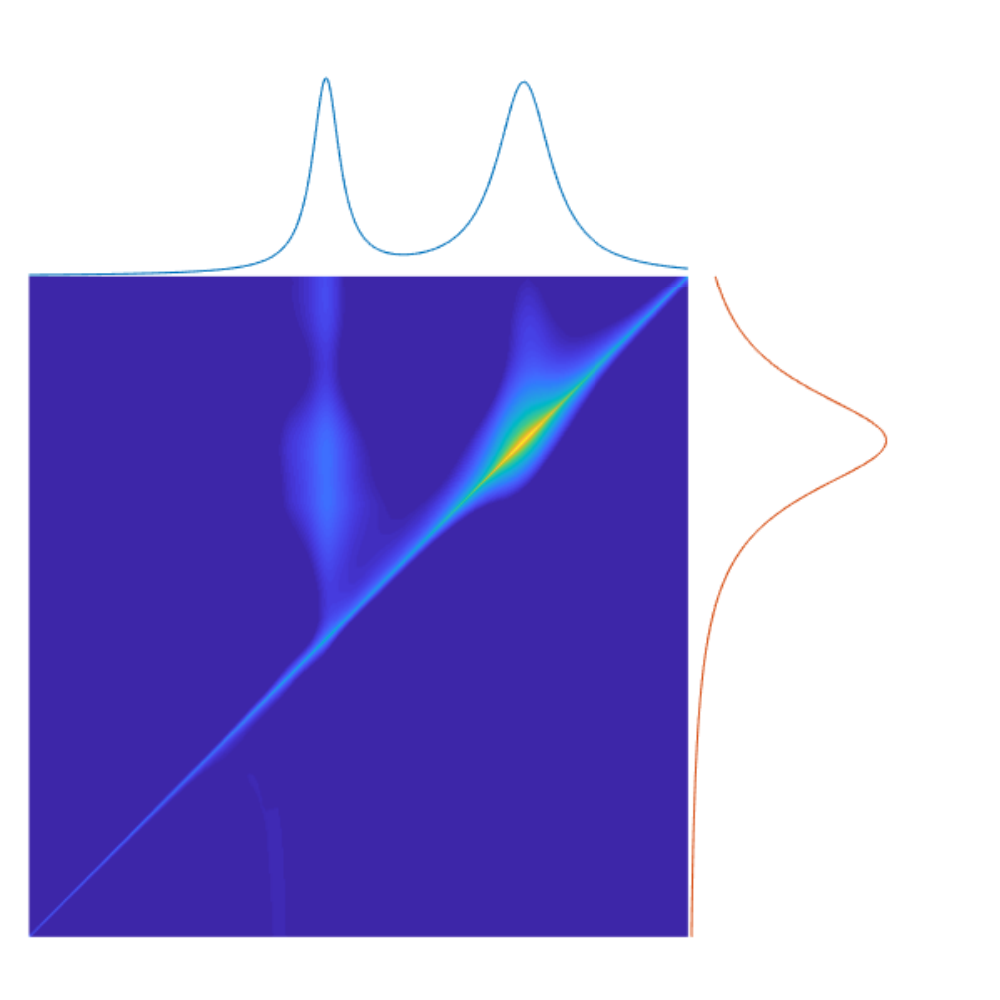}&
  \includegraphics[width=0.23\textwidth]{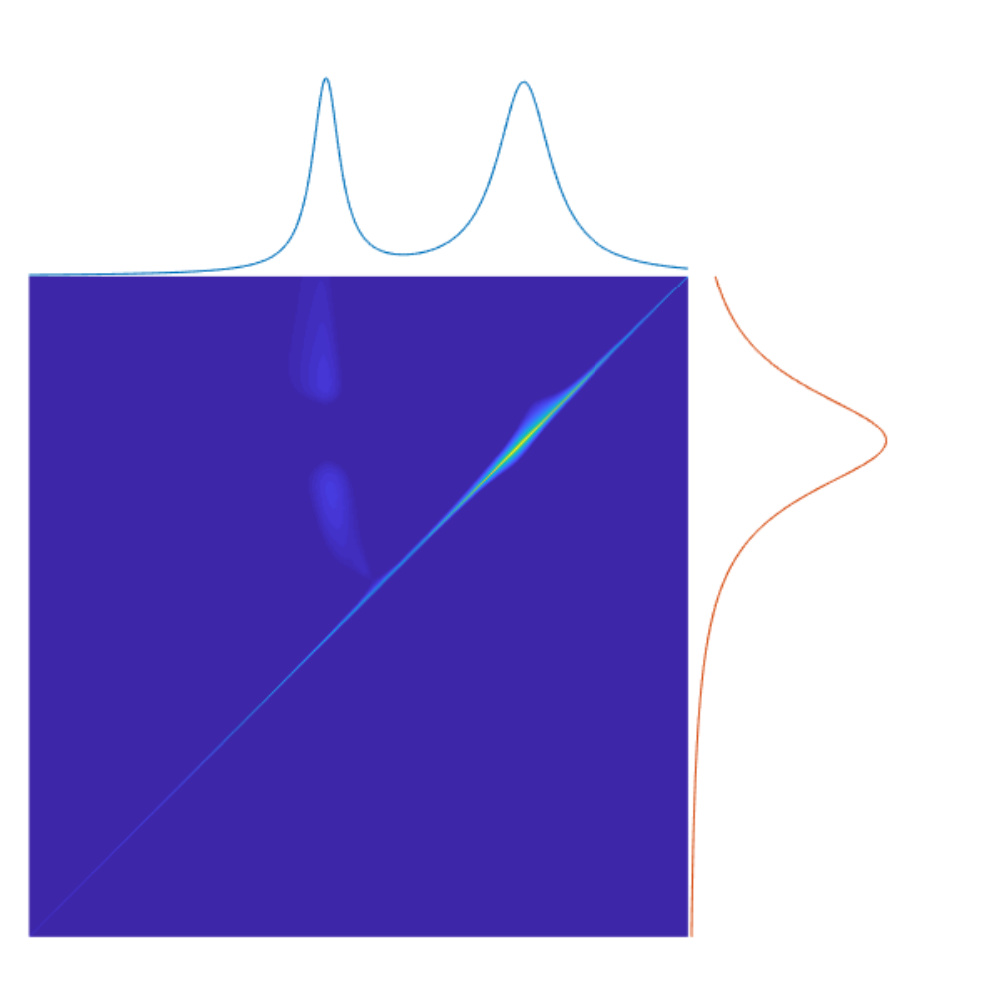}&
  \includegraphics[width=0.23\textwidth]{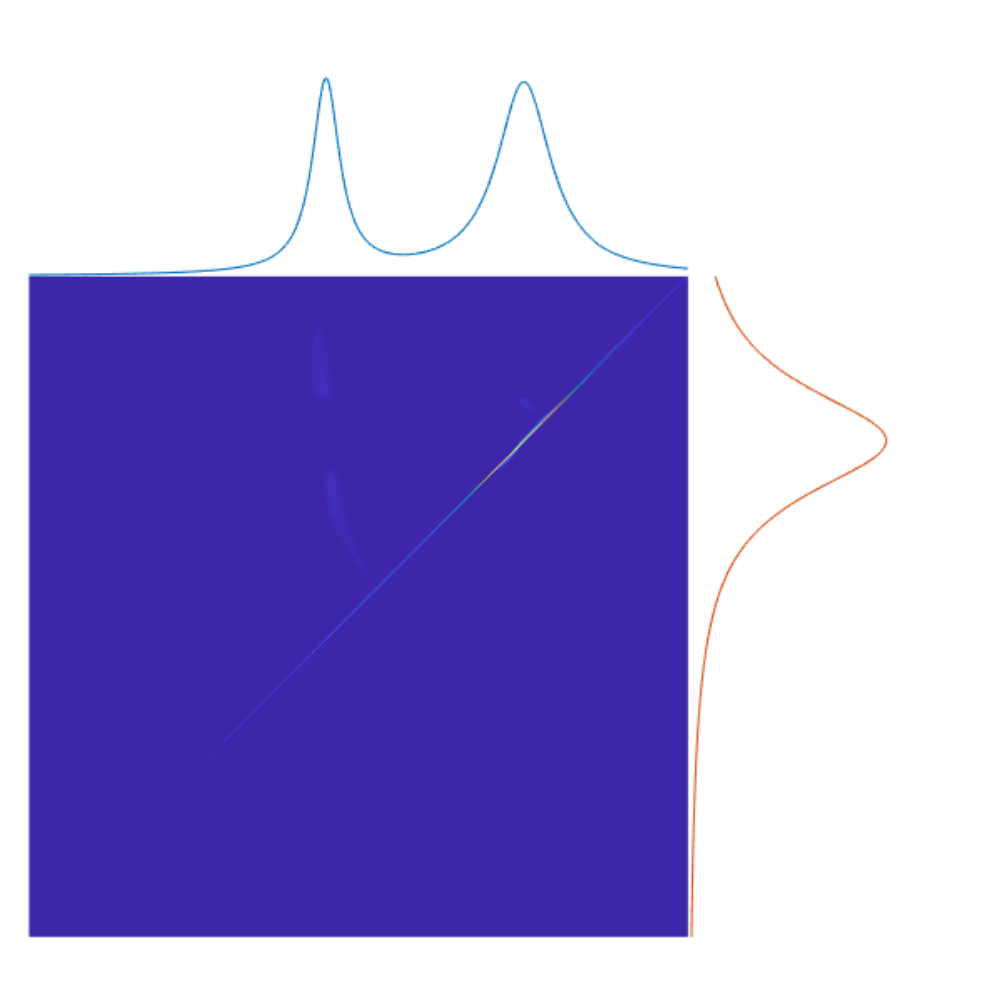}&
  \includegraphics[width=0.23\textwidth]{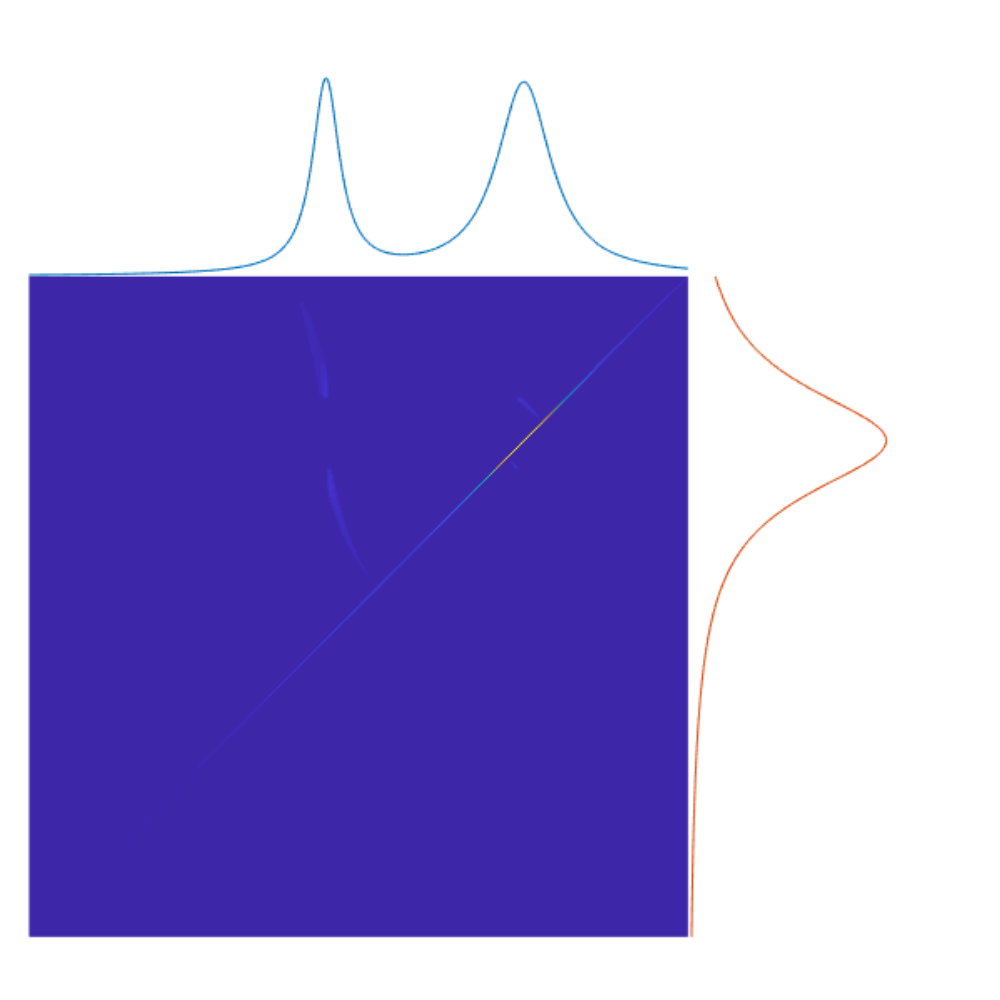}\\
                                                                        $\gamma = 1,000$ &
                                                                       $\gamma = 100$ &
                                                                        $\gamma = 10$&
                                                                                        $\gamma = 1$
  \end{tabular}
  \caption{Visualization of transport plans of the quadratically regularized optimal transport problem with $M=N=400$ and concave increasing transport cost $c_{ij} = \sqrt{\abs{x_{i}-y_{j}}}$.}
  \label{fig:qrot_gamma_to_zero_sqrt}
\end{figure}

\subsection{Mesh independence and comparsion of SSN and NLGS }
\label{sec:num-mesh-indep}

While we did not analyze our algorithms in the continuous case, we made an experiment to see how the methods converge when we change the mesh size of the discretization. To that end, we did a simple piecewise constant approximation of the marginals, the cost and the transport plan as described in Appendix~\ref{sec:discretization}.
This derivation shows that one has to scale up the marginals for finer discretization (or, equivalently, scale down the regularization parameter $\gamma$) to get consistent results.
We also took care to adapt the termination criteria so that we terminate the algorithms when the continuous counterpart of the termination criteria is satisfied (again, see Table~\ref{tbl:discretization_mapping} in Appendix~\ref{sec:discretization} for details). 

We used marginals $\mu^{\pm}:[0,1]\to[0,\infty[$ of the form
\[
\mu(x) = r\tfrac{1}{1+m(x-a)^{2}},,\qquad \nu(x) = s\Big(\tfrac{1}{1+m_{1}(x-a_{1})^{2}}+\tfrac{1}{1+m_{2}(x-a_{2})^{2}}\Big)
\]
with varying $m,m_{1},m_{2}>0,\ 0<a,a_{1},a_{2}<1$  and appropriate normalization factors $r,s$ and quadratic cost $c(x,y) = (x-y)^{2}$ and discretized each instance of the problem with $M=N$ varying from $10$ to $1,000$. We solved the problem for each size for regularization parameter $\gamma = 0.001$ with the semismooth Newton method from Algorithm~\ref{alg:qrot_ssn} (with parameters $\epsilon = 10^{-6}$ and Armijo parameters $\kappa = 0.5$ and $\theta = 0.1$) up to tolerance $10^{-3}$ and report the number of iterations needed in Figure~\ref{fig:mesh_independence_ssn}. As can be observed, the number of iterations is comparable for each instance of the problem. Moreover, it seems that the number of iterations does not grow with finer discretization (however, the number of iterations seems to oscillate unpredictable for coarse discretization). The would hint at mesh independence of the method and one could hope to prove this is future research. We performed a similar experiment for the nonlinear Gauss-Seidl method from Algorithm~\ref{alg:qrot_gs} (with larger regularization parameter $\gamma = 0.05$ and only up to $M=N=500$ and show the results in Figure~\ref{fig:mesh_independence_gs}. We see an overall increase of the number of iterations but only very slightly (with several instances where the number of iterations does not increasing with finer discretization).

\begin{figure}[htb]
  \centering
%
%
\definecolor{mycolor1}{rgb}{0.00000,0.44700,0.74100}%
\definecolor{mycolor2}{rgb}{0.85000,0.32500,0.09800}%
\definecolor{mycolor3}{rgb}{0.92900,0.69400,0.12500}%
\definecolor{mycolor4}{rgb}{0.49400,0.18400,0.55600}%
\definecolor{mycolor5}{rgb}{0.46600,0.67400,0.18800}%
\definecolor{mycolor6}{rgb}{0.30100,0.74500,0.93300}%
\definecolor{mycolor7}{rgb}{0.63500,0.07800,0.18400}%
\begin{tikzpicture}

\begin{axis}[%
width=6cm,
height=4cm,
scale only axis,
xmin=0,
xmax=1000,
xlabel style={font=\color{white!15!black}},
xlabel={$M=N$},
ymin=0,
ymax=100,
ylabel style={font=\color{white!15!black}},
ylabel={\# ssn iterations},
axis background/.style={fill=white}
]
\addplot [color=mycolor1, forget plot]
  table[row sep=crcr]{%
10	38\\
14	53\\
19	48\\
27	67\\
37	69\\
52	75\\
72	50\\
100	45\\
139	34\\
193	26\\
268	29\\
373	29\\
518	32\\
720	35\\
1000	35\\
};
\addplot [color=mycolor2, forget plot]
  table[row sep=crcr]{%
10	34\\
14	36\\
19	28\\
27	29\\
37	25\\
52	34\\
72	32\\
100	22\\
139	18\\
193	17\\
268	22\\
373	20\\
518	25\\
720	26\\
1000	28\\
};
\addplot [color=mycolor3, forget plot]
  table[row sep=crcr]{%
10	37\\
14	47\\
19	51\\
27	61\\
37	59\\
52	68\\
72	52\\
100	42\\
139	29\\
193	28\\
268	27\\
373	29\\
518	28\\
720	29\\
1000	30\\
};
\addplot [color=mycolor4, forget plot]
  table[row sep=crcr]{%
10	45\\
14	48\\
19	47\\
27	42\\
37	33\\
52	31\\
72	32\\
100	29\\
139	24\\
193	25\\
268	31\\
373	28\\
518	37\\
720	40\\
1000	42\\
};
\addplot [color=mycolor5, forget plot]
  table[row sep=crcr]{%
10	30\\
14	32\\
19	26\\
27	33\\
37	27\\
52	22\\
72	24\\
100	21\\
139	18\\
193	19\\
268	21\\
373	20\\
518	18\\
720	18\\
1000	18\\
};
\addplot [color=mycolor6, forget plot]
  table[row sep=crcr]{%
10	48\\
14	63\\
19	63\\
27	74\\
37	78\\
52	51\\
72	44\\
100	35\\
139	25\\
193	23\\
268	31\\
373	25\\
518	36\\
720	38\\
1000	40\\
};
\addplot [color=mycolor7, forget plot]
  table[row sep=crcr]{%
10	30\\
14	35\\
19	38\\
27	39\\
37	51\\
52	33\\
72	32\\
100	30\\
139	21\\
193	24\\
268	20\\
373	22\\
518	24\\
720	23\\
1000	23\\
};
\addplot [color=mycolor1, forget plot]
  table[row sep=crcr]{%
10	24\\
14	33\\
19	28\\
27	28\\
37	15\\
52	19\\
72	19\\
100	17\\
139	18\\
193	18\\
268	19\\
373	20\\
518	21\\
720	22\\
1000	22\\
};
\addplot [color=mycolor2, forget plot]
  table[row sep=crcr]{%
10	34\\
14	31\\
19	31\\
27	23\\
37	27\\
52	36\\
72	26\\
100	25\\
139	19\\
193	18\\
268	23\\
373	20\\
518	25\\
720	27\\
1000	28\\
};
\addplot [color=mycolor3, forget plot]
  table[row sep=crcr]{%
10	20\\
14	32\\
19	22\\
27	29\\
37	25\\
52	22\\
72	28\\
100	20\\
139	20\\
193	21\\
268	22\\
373	24\\
518	26\\
720	29\\
1000	30\\
};
\addplot [color=mycolor4, forget plot]
  table[row sep=crcr]{%
10	36\\
14	50\\
19	65\\
27	62\\
37	45\\
52	43\\
72	38\\
100	30\\
139	25\\
193	22\\
268	28\\
373	25\\
518	31\\
720	33\\
1000	35\\
};
\addplot [color=mycolor5, forget plot]
  table[row sep=crcr]{%
10	13\\
14	35\\
19	32\\
27	33\\
37	43\\
52	32\\
72	22\\
100	21\\
139	23\\
193	23\\
268	21\\
373	26\\
518	23\\
720	24\\
1000	24\\
};
\addplot [color=mycolor6, forget plot]
  table[row sep=crcr]{%
10	30\\
14	33\\
19	49\\
27	58\\
37	59\\
52	57\\
72	35\\
100	27\\
139	22\\
193	19\\
268	21\\
373	21\\
518	23\\
720	23\\
1000	25\\
};
\addplot [color=mycolor7, forget plot]
  table[row sep=crcr]{%
10	31\\
14	34\\
19	34\\
27	40\\
37	45\\
52	34\\
72	39\\
100	23\\
139	22\\
193	19\\
268	17\\
373	21\\
518	17\\
720	20\\
1000	19\\
};
\addplot [color=mycolor1, forget plot]
  table[row sep=crcr]{%
10	55\\
14	67\\
19	76\\
27	72\\
37	52\\
52	51\\
72	49\\
100	32\\
139	24\\
193	24\\
268	33\\
373	26\\
518	36\\
720	39\\
1000	40\\
};
\addplot [color=mycolor2, forget plot]
  table[row sep=crcr]{%
10	29\\
14	41\\
19	36\\
27	39\\
37	41\\
52	38\\
72	32\\
100	37\\
139	26\\
193	21\\
268	21\\
373	23\\
518	27\\
720	27\\
1000	27\\
};
\addplot [color=mycolor3, forget plot]
  table[row sep=crcr]{%
10	39\\
14	55\\
19	36\\
27	43\\
37	36\\
52	32\\
72	31\\
100	25\\
139	21\\
193	22\\
268	29\\
373	24\\
518	32\\
720	34\\
1000	36\\
};
\addplot [color=mycolor4, forget plot]
  table[row sep=crcr]{%
10	36\\
14	35\\
19	35\\
27	36\\
37	37\\
52	37\\
72	29\\
100	24\\
139	20\\
193	21\\
268	22\\
373	23\\
518	23\\
720	26\\
1000	25\\
};
\addplot [color=mycolor5, forget plot]
  table[row sep=crcr]{%
10	41\\
14	54\\
19	55\\
27	75\\
37	97\\
52	87\\
72	71\\
100	49\\
139	45\\
193	37\\
268	28\\
373	34\\
518	30\\
720	32\\
1000	33\\
};
\addplot [color=mycolor6, forget plot]
  table[row sep=crcr]{%
10	37\\
14	47\\
19	63\\
27	64\\
37	87\\
52	68\\
72	53\\
100	47\\
139	38\\
193	31\\
268	24\\
373	34\\
518	28\\
720	28\\
1000	29\\
};
\addplot [color=mycolor7, forget plot]
  table[row sep=crcr]{%
10	35\\
14	36\\
19	30\\
27	33\\
37	29\\
52	26\\
72	22\\
100	20\\
139	20\\
193	18\\
268	19\\
373	20\\
518	21\\
720	22\\
1000	23\\
};
\addplot [color=mycolor1, forget plot]
  table[row sep=crcr]{%
10	22\\
14	35\\
19	36\\
27	28\\
37	28\\
52	39\\
72	30\\
100	23\\
139	17\\
193	18\\
268	20\\
373	20\\
518	19\\
720	26\\
1000	27\\
};
\addplot [color=mycolor2, forget plot]
  table[row sep=crcr]{%
10	53\\
14	54\\
19	57\\
27	54\\
37	41\\
52	39\\
72	37\\
100	30\\
139	21\\
193	22\\
268	29\\
373	24\\
518	32\\
720	34\\
1000	37\\
};
\addplot [color=mycolor3, forget plot]
  table[row sep=crcr]{%
10	33\\
14	44\\
19	42\\
27	38\\
37	54\\
52	52\\
72	52\\
100	35\\
139	24\\
193	23\\
268	24\\
373	25\\
518	26\\
720	26\\
1000	28\\
};
\addplot [color=mycolor4, forget plot]
  table[row sep=crcr]{%
10	40\\
14	44\\
19	43\\
27	43\\
37	39\\
52	33\\
72	27\\
100	24\\
139	21\\
193	22\\
268	27\\
373	25\\
518	30\\
720	31\\
1000	34\\
};
\end{axis}
\end{tikzpicture}%
  \caption{Number of iteration for the semismooth Newton method to achieve a desired accuracy. Each graph corresponds to one instance of the problem.}\label{fig:mesh_independence_ssn}
\end{figure}
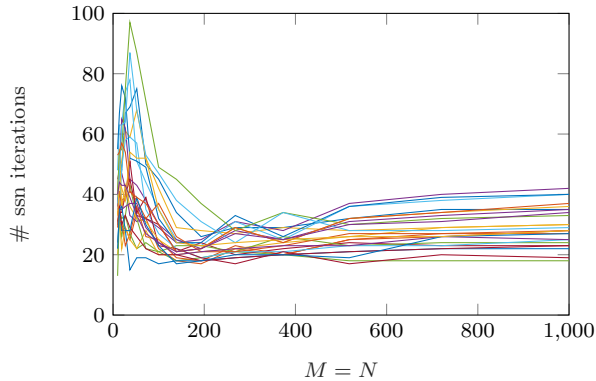

\begin{figure}[htb]
  \centering
%
%
\definecolor{mycolor1}{rgb}{0.00000,0.44700,0.74100}%
\definecolor{mycolor2}{rgb}{0.85000,0.32500,0.09800}%
\definecolor{mycolor3}{rgb}{0.92900,0.69400,0.12500}%
\definecolor{mycolor4}{rgb}{0.49400,0.18400,0.55600}%
\definecolor{mycolor5}{rgb}{0.46600,0.67400,0.18800}%
\definecolor{mycolor6}{rgb}{0.30100,0.74500,0.93300}%
\definecolor{mycolor7}{rgb}{0.63500,0.07800,0.18400}%
\begin{tikzpicture}

\begin{axis}[%
width=6cm,
height=4cm,
scale only axis,
xmin=0,
xmax=500,
xlabel style={font=\color{white!15!black}},
xlabel={$M=N$},
ymin=0,
ymax=160,
ylabel style={font=\color{white!15!black}},
ylabel={\# gs iterations},
axis background/.style={fill=white}
]
\addplot [color=mycolor1, forget plot]
  table[row sep=crcr]{%
10	19\\
13	16\\
17	23\\
23	20\\
31	22\\
40	25\\
53	25\\
71	27\\
94	27\\
124	29\\
164	28\\
216	30\\
286	33\\
378	36\\
500	40\\
};
\addplot [color=mycolor2, forget plot]
  table[row sep=crcr]{%
10	30\\
13	31\\
17	39\\
23	44\\
31	51\\
40	52\\
53	53\\
71	50\\
94	46\\
124	39\\
164	36\\
216	40\\
286	44\\
378	49\\
500	55\\
};
\addplot [color=mycolor3, forget plot]
  table[row sep=crcr]{%
10	43\\
13	42\\
17	47\\
23	50\\
31	56\\
40	63\\
53	70\\
71	74\\
94	75\\
124	79\\
164	80\\
216	49\\
286	44\\
378	37\\
500	29\\
};
\addplot [color=mycolor4, forget plot]
  table[row sep=crcr]{%
10	31\\
13	34\\
17	37\\
23	47\\
31	53\\
40	59\\
53	58\\
71	56\\
94	50\\
124	47\\
164	50\\
216	53\\
286	57\\
378	61\\
500	65\\
};
\addplot [color=mycolor5, forget plot]
  table[row sep=crcr]{%
10	30\\
13	42\\
17	45\\
23	52\\
31	61\\
40	65\\
53	66\\
71	66\\
94	67\\
124	64\\
164	57\\
216	56\\
286	56\\
378	60\\
500	65\\
};
\addplot [color=mycolor6, forget plot]
  table[row sep=crcr]{%
10	23\\
13	21\\
17	27\\
23	32\\
31	35\\
40	39\\
53	43\\
71	49\\
94	53\\
124	58\\
164	62\\
216	66\\
286	69\\
378	72\\
500	74\\
};
\addplot [color=mycolor7, forget plot]
  table[row sep=crcr]{%
10	26\\
13	19\\
17	20\\
23	19\\
31	18\\
40	21\\
53	20\\
71	22\\
94	24\\
124	27\\
164	30\\
216	34\\
286	38\\
378	42\\
500	46\\
};
\addplot [color=mycolor1, forget plot]
  table[row sep=crcr]{%
10	11\\
13	12\\
17	13\\
23	15\\
31	15\\
40	16\\
53	17\\
71	18\\
94	19\\
124	20\\
164	20\\
216	21\\
286	21\\
378	20\\
500	20\\
};
\addplot [color=mycolor2, forget plot]
  table[row sep=crcr]{%
10	29\\
13	29\\
17	32\\
23	39\\
31	46\\
40	52\\
53	62\\
71	72\\
94	83\\
124	95\\
164	106\\
216	114\\
286	120\\
378	129\\
500	141\\
};
\addplot [color=mycolor3, forget plot]
  table[row sep=crcr]{%
10	38\\
13	46\\
17	52\\
23	52\\
31	51\\
40	55\\
53	50\\
71	40\\
94	40\\
124	38\\
164	37\\
216	36\\
286	36\\
378	35\\
500	34\\
};
\addplot [color=mycolor4, forget plot]
  table[row sep=crcr]{%
10	28\\
13	31\\
17	37\\
23	40\\
31	48\\
40	55\\
53	65\\
71	75\\
94	84\\
124	91\\
164	95\\
216	99\\
286	104\\
378	109\\
500	116\\
};
\addplot [color=mycolor5, forget plot]
  table[row sep=crcr]{%
10	19\\
13	26\\
17	30\\
23	34\\
31	37\\
40	40\\
53	42\\
71	42\\
94	44\\
124	43\\
164	39\\
216	41\\
286	41\\
378	42\\
500	42\\
};
\addplot [color=mycolor6, forget plot]
  table[row sep=crcr]{%
10	32\\
13	35\\
17	45\\
23	48\\
31	55\\
40	62\\
53	73\\
71	85\\
94	94\\
124	105\\
164	111\\
216	108\\
286	105\\
378	100\\
500	96\\
};
\addplot [color=mycolor7, forget plot]
  table[row sep=crcr]{%
10	23\\
13	26\\
17	33\\
23	38\\
31	43\\
40	48\\
53	53\\
71	58\\
94	61\\
124	63\\
164	51\\
216	53\\
286	58\\
378	66\\
500	75\\
};
\addplot [color=mycolor1, forget plot]
  table[row sep=crcr]{%
10	43\\
13	42\\
17	49\\
23	52\\
31	56\\
40	62\\
53	69\\
71	76\\
94	84\\
124	94\\
164	104\\
216	114\\
286	122\\
378	128\\
500	138\\
};
\addplot [color=mycolor2, forget plot]
  table[row sep=crcr]{%
10	22\\
13	27\\
17	36\\
23	33\\
31	35\\
40	37\\
53	33\\
71	37\\
94	40\\
124	43\\
164	47\\
216	52\\
286	58\\
378	64\\
500	70\\
};
\addplot [color=mycolor3, forget plot]
  table[row sep=crcr]{%
10	20\\
13	21\\
17	27\\
23	28\\
31	33\\
40	37\\
53	41\\
71	42\\
94	45\\
124	49\\
164	51\\
216	52\\
286	55\\
378	57\\
500	59\\
};
\addplot [color=mycolor4, forget plot]
  table[row sep=crcr]{%
10	23\\
13	26\\
17	28\\
23	26\\
31	20\\
40	21\\
53	22\\
71	23\\
94	23\\
124	25\\
164	25\\
216	24\\
286	23\\
378	19\\
500	20\\
};
\addplot [color=mycolor5, forget plot]
  table[row sep=crcr]{%
10	54\\
13	63\\
17	70\\
23	76\\
31	77\\
40	81\\
53	84\\
71	87\\
94	90\\
124	88\\
164	49\\
216	43\\
286	42\\
378	42\\
500	42\\
};
\addplot [color=mycolor6, forget plot]
  table[row sep=crcr]{%
10	33\\
13	41\\
17	49\\
23	55\\
31	61\\
40	61\\
53	60\\
71	57\\
94	54\\
124	51\\
164	55\\
216	59\\
286	63\\
378	68\\
500	73\\
};
\addplot [color=mycolor7, forget plot]
  table[row sep=crcr]{%
10	34\\
13	37\\
17	38\\
23	49\\
31	50\\
40	56\\
53	64\\
71	72\\
94	82\\
124	94\\
164	105\\
216	117\\
286	130\\
378	144\\
500	157\\
};
\addplot [color=mycolor1, forget plot]
  table[row sep=crcr]{%
10	29\\
13	24\\
17	28\\
23	28\\
31	29\\
40	32\\
53	30\\
71	35\\
94	38\\
124	42\\
164	48\\
216	53\\
286	59\\
378	65\\
500	72\\
};
\addplot [color=mycolor2, forget plot]
  table[row sep=crcr]{%
10	31\\
13	23\\
17	27\\
23	22\\
31	22\\
40	23\\
53	23\\
71	23\\
94	25\\
124	27\\
164	30\\
216	32\\
286	33\\
378	36\\
500	40\\
};
\addplot [color=mycolor3, forget plot]
  table[row sep=crcr]{%
10	38\\
13	47\\
17	51\\
23	63\\
31	65\\
40	74\\
53	79\\
71	86\\
94	92\\
124	102\\
164	110\\
216	117\\
286	124\\
378	135\\
500	144\\
};
\addplot [color=mycolor4, forget plot]
  table[row sep=crcr]{%
10	43\\
13	44\\
17	51\\
23	49\\
31	53\\
40	53\\
53	54\\
71	41\\
94	36\\
124	34\\
164	31\\
216	30\\
286	30\\
378	31\\
500	32\\
};
\end{axis}
\end{tikzpicture}%
  \caption{Number of iteration for the nonlinear Gauss-Seidel semismooth method to achieve a desired accuracy. Each graph corresponds to one instance of the problem.}\label{fig:mesh_independence_gs}
\end{figure}
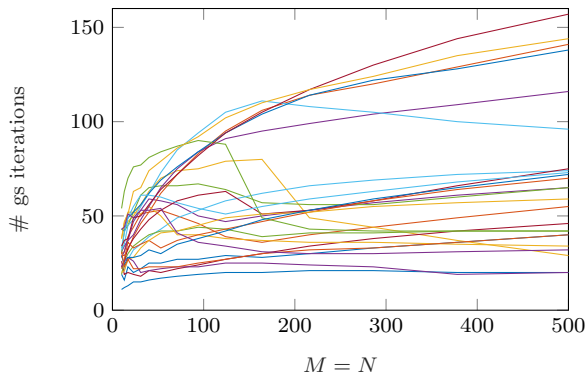

\subsection{Optimal transport between empirical distributions}
\label{sec:empiricial}

As an example in two space dimensions, we consider two distributions $\mu,\nu$. Instead of using these as marginals, we consider empirical distributions, i.e. we generate samples $(x_{i})_{i=1,\dots,N}$, sampled from $\mu$ and $(y_{j})_{j=1,\dots,M}$, sampled from $\nu$. These samples give empirical approximations
\[
\hat\mu = \tfrac{1}{N}\sum_{i=1}^{N}\delta_{x_{i}},\quad\hat\nu = \tfrac{1}{M}\sum_{j=1}^{M}\delta_{y_{j}}.
\]
The optimal transport problem~\eqref{qrot-cont} with these two marginals does no fulfill Assumption 1, since the marginals are not $L^{2}$-functions. However, we can consider it as a discrete problem optimal transport problem in the form~\eqref{eq:qrot-discrete} when we denote $c_{ij} = c(x_{i},y_{j})$ (for some cost $c$) and marginals $\one_{M}$ and $\one_{N}$, respectively.
We solve this discrete optimal transport problem and obtain a transport plan $\pi^{*}$. Since we use quadratic regularization, the plan will be sparse and hence, we can visualize it by plotting arrows from $x_{i}$ to $y_{j}$ and we make the thickness of the arrows proportional to the size of the entry $\pi^{*}_{ij}$. In other words: The thickness of the arrow from $x_{i}$ to $y_{j}$ indicates how much of the mass in $x_{i}$ has been transported to $y_{j}$.
In Figure~\ref{fig:empirical_q} we show the result for $N=80$ samples from an anisotropic Gaussian distribution (centered at the origin) and $M=120$ samples from a uniform distribution on a segment of an annulus. We used $c(x_{i},y_{j}) = \norm{x_{i}-y_{j}}^{2}$ with the Euclidean norm and regularization paramater $\gamma=1$. The resulting plan $\pi^{*}$ has $212$ non-zero entries.
For a comparison we show the result of entropically regularized optimal transport in the same situation in Figure~\ref{fig:empirical_s}. We used $\gamma= 0.05$ (which is the smallest value for which our naive implementation of Sinkhorn algorithms is still stable). The resulting plan has $6730$ nonzero entries and we only plot lines for the transport which are larger than 1\% of the largest entry in the optimal transport plan.

\begin{figure}[htb]
  \centering
  \includegraphics[width=\textwidth]{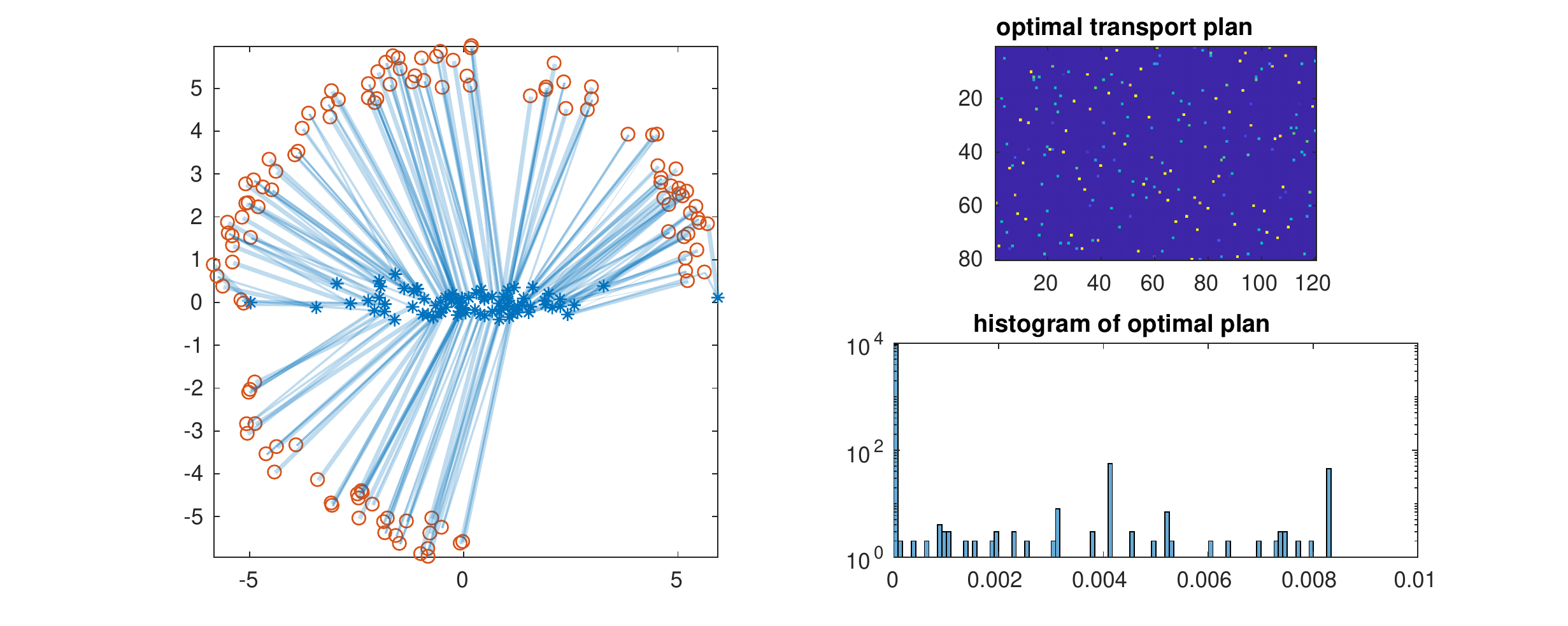}
  \caption{Illustration of the quadratically regularized optimal transport between empirical distributions. Left: Source distribution $\hat\mu$ denoted by blue starts and target distribution $\hat \nu$ denoted by red circles together with lines that indicate the transport. Right: The transport plan and its histogram in semi-log scale.}\label{fig:empirical_q}
\end{figure}
\begin{figure}[htb]
  \centering
  \includegraphics[width=\textwidth]{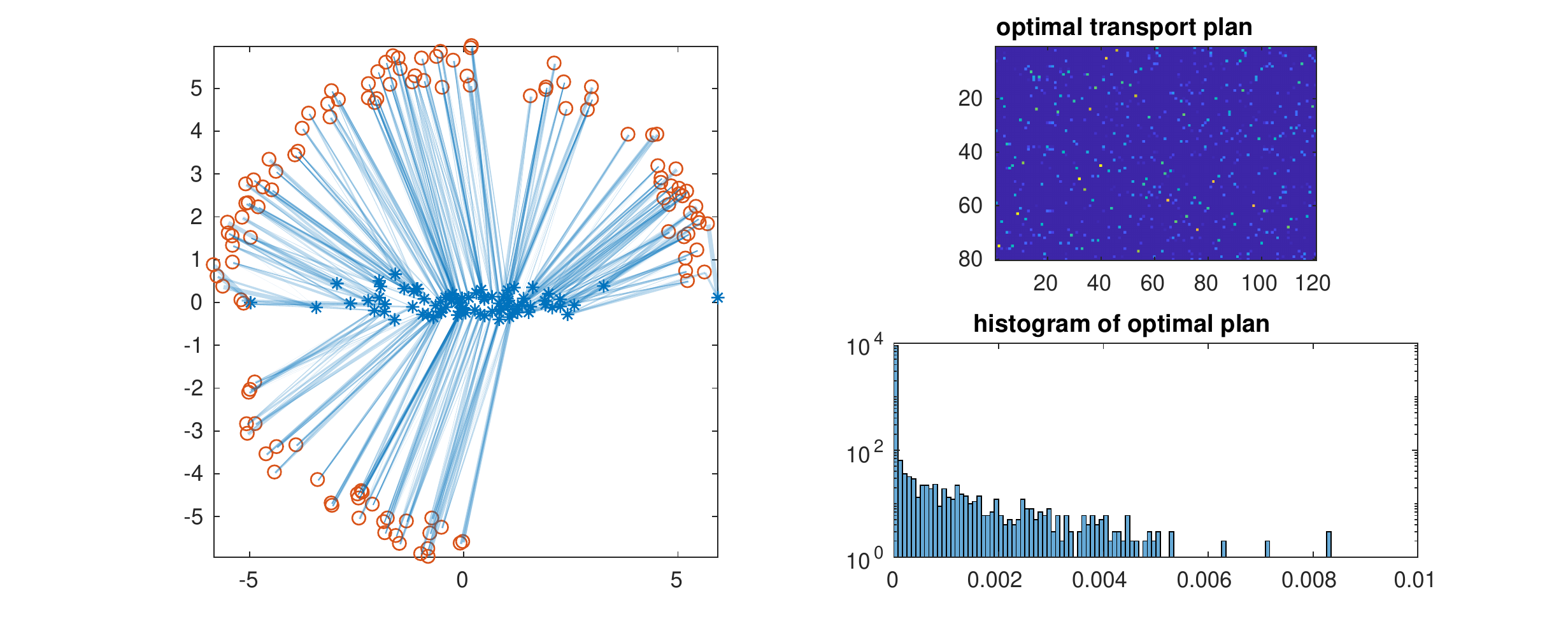}
  \caption{Illustration of the entropically regularized optimal transport between empirical distributions. Left: Source distribution $\hat\mu$ denoted by blue starts and target distribution $\hat \nu$ denoted by red circles together with lines that indicate the transport. Right: The transport plan and its histogram in semi-log scale.}\label{fig:empirical_s}
\end{figure}

\section{Conclusion}
\label{sec:conclusion}
We analyzed the quadratically regularized optimal transport problem in Kantorovich form.
While it is straight forward to derive the dual problem, our proof of existence of dual optima is quite intricate. We note that we are not aware of any proof of existence of the dual of other regularized transport problems in the continuous case besides the very recent~\cite{clason2019entropic} for entropic regularization.
We derived two algorithms to solve the dual problems, both of which converge by standard results.
It turns out that the semismooth quasi-Newton methods converges fast in all cases and that it behaves stably with respect to the regularization parameter in our numerical experiments. We even observe mesh independence of the method in the experiments.
One drawback of the semismooth Newton method is (compared with, e.g.\ the Sinkhorn iteration~\cite{cuturi2013sinkhorn}), is that we need to assemble the Newton matrix in each step. While this matrix is usually very sparse, one still needs to check $MN$ cases, which may be too large for large scale problems.
We did not investigate, how special structure of the cost function $c$ may help to reduce the cost to assemble the sparse matrix $\sigma$.

\begin{acknowledgements}
  We would like to thank the reviewer for helpful suggestions that lead to an improvement presentation  
  and also Stephan Walther (TU Dortmund) for helping with the construction of the counterexample in Section~\ref{sec:reg-dual}.
\end{acknowledgements}

\appendix
\section{Discretization with piecewise-constant ansatz functions}
\label{sec:discretization}
For sake of brevity, we just consider an equidistant discretization of $[0,1]$ into $N$ intervals using piecewise constant ansatz functions, i.e.
\begin{align*}
	\pi(x,y) &\coloneqq \sum_{i,j=0}^{N-1} \pi_{ij} \chi_{(\frac{i}{N},\frac{i+1}{N})\times(\frac{j}{N},\frac{j+1}{N})}(x,y), 
\end{align*}
for coefficients $\pi_{ij}$ and assume analogous definitions for the quantities $c$, $\mu^+$, $\mu^-$, $\alpha$ and $\beta$. They have to coincide on average
over the intervals. Again, we study this for $\pi$ and obtain that the identity
\begin{align*}
	\int_{\frac{i}{N}}^{\frac{i+1}{N}} \int_{\frac{j}{N}}^{\frac{j+1}{N}} \pi(x,y) \,\dd y \dd x
	&= \int_{\frac{i}{N}}^{\frac{i+1}{N}} \int_{\frac{j}{N}}^{\frac{j+1}{N}}  
	   \sum_{i,j=0}^{N-1} \pi_{ij} \chi_{(\frac{i}{N},\frac{i+1}{N})\times(\frac{j}{N},\frac{j+1}{N})}(x,y) \,\dd y \dd x  \\
	&= \frac{1}{N^2} \pi_{ij}
\end{align*}
holds. Again, analogous identities hold for the quantities $c$, $\mu^+$, $\mu^-$, $\alpha$ and $\beta$. The ones with one-dimensional domain
are scaled by $\frac{1}{N}$ instead of $\frac{1}{N^2}$. 

Now, we consider the discrete Algorithm~\ref{alg:qrot_ssn}, which operates on discrete quantities and establish a consistent mapping 
of the quantities from the discretization to the ones of the solver. 
We denote its input quantities by $\bar{c}_{ij}$, $\bar{\mu}^-_i$, $\bar{\mu}^+_i$ and its output quantities by $\bar{\alpha}_i$, $\bar{\beta}_j$, $\bar{pi}_{ij}$,  and $\bar{E}$. It solves for 
\begin{align*}
	\sum_{j=0}^{N-1} \bar{\pi}_{ij} = \gamma \bar{\mu}_i^+,
\end{align*}
which we desire to correspond to
\begin{align*}
	\int_{\frac{i}{N}}^{\frac{i+1}{N}} \int_0^1 \pi(x,y)\,\dd y \dd x = \int_{\frac{i}{N}}^{\frac{i+1}{N}}\mu^+(x) \,\dd x.
\end{align*}
We plug in the ansatz functions and obtain the identity
\begin{align*}
	\frac{1}{N^2} \sum_{j=0}^{N-1} \pi_{ij} = \frac{1}{N} \mu^+_i .
\end{align*}
We set $\bar{\pi}_{ij} \coloneqq \gamma \pi_{ij}$ and obtain
\begin{align*}
	\sum_{j=0}^{N-1} \bar{\pi}_{ij} = \sum_{j=0}^{N-1} \pi_{ij} = N \mu^-_i.
\end{align*}
Thus, the choice $\bar{\mu}^-_i \coloneqq N \mu^-_i$ gives a consistent conversion. Similarly, we obtain $\bar{\mu}^+_j \coloneqq N \mu^+_j$.
We proceed with the objective. Plugging in the ansatz functions into the continuous objective gives
\begin{align*}
	E = \frac{\gamma}{2}  \frac{1}{N^2} \sum_{i,j=0}^{N-1} \pi_{ij}^2 - \frac{1}{N} \left(\sum_{i=0}^{N-1} \alpha_i \mu^-_i + \sum_{j=0}^{N-1} \beta_j \mu^+_j \right).
\end{align*}
The solver computes
\begin{align*}
	\bar{E} = \frac{1}{2}  \sum_{i,j=0}^{N-1} \bar{\pi}_{ij}^2 - \gamma \left(\sum_{i=0}^{N-1} \bar{\alpha}_i \bar{\mu}^-_i + \sum_{j=0}^{N-1} \bar{\beta}_j \bar{\mu}^+_j \right),
\end{align*}
Plugging in $N \mu^-_i = \bar{\mu}^-_i$, $N \mu^+_j = \bar{\mu}^+_j$ and $\gamma \pi_{ij} = \bar{\pi}_{ij}$ gives
\begin{align*}
	\bar{E} = \gamma^2 \frac{1}{2}  \sum_{i,j=0}^{N-1} \pi_{ij}^2 - \gamma N \left(\sum_{i=0}^{N-1} \bar{\alpha}_i \mu^-_i + \sum_{j=0}^{N-1} \bar{\beta}_j \mu^+_j \right).
\end{align*}
Thus, the consistent identity $E = \frac{1}{\gamma N^2} \bar{E}$ follows if we choose $\bar{\alpha}_i \coloneqq \alpha_i$ and $\bar{\beta}_i \coloneqq \beta_i$.
The solver computes $\bar{\alpha}_i$ as the solution of
\begin{align*}
	\sum_{j=0}^{N-1}(\bar{\alpha}_i + \bar{\beta}_j - \bar{c}_{ij})_+ = \gamma \bar{\mu}_i^-,
\end{align*}
whereas the discretization of the corresponding continuous equation reads
\begin{align*}
	\frac{1}{N} \sum_{j=0}^{N-1} (\alpha_i + \beta_j  - c_{ij})_+ = \gamma \mu^-_i
\end{align*}
in terms of the coefficients. Plugging in the choices $\alpha_i = \bar{\alpha}_i$, $\beta_j = \bar{\beta}_j$, $c_{ij} = \bar{c}_{ij}$ and $N \mu^-_i = \bar{\mu}^-_i$
yields equivalence of the latter equation to
\begin{align*}
	\frac{1}{N} \sum_{j=0}^{N-1} (\bar{\alpha}_i + \bar{\beta}_j  - \bar{c}_{ij})_+  = \gamma \frac{1}{N} \bar{\mu}^-_j,
\end{align*}
which is equivalent to the equation that is solved by Algorithm~\ref{alg:qrot_ssn}. The argument for $\bar{\mu}^+_j$ is carried out analogously.

Regarding termination, the solver checks the criteria
\begin{gather*}
	\left|\frac{1}{\gamma}\sum_{j=0}^{M-1} \bar{\pi}_{ij} - \bar{\mu}_i^-\right| < \tau\quad\text{ and }\quad 
	\left|\frac{1}{\gamma}\sum_{i=0}^{M-1} \bar{\pi}_{ij} - \bar{\mu}_j^+\right| < \tau.
\end{gather*}
We only consider the first and plug the identity $\gamma \pi_{ij} = \bar{\pi}_{ij}$ into it, which gives equivalence to
\begin{align*}
	\left|\sum_{j=0}^{M-1} \pi_{ij} - N \mu_i^-\right| &< \tau. 
\end{align*}
This in turn is equivalent to
\begin{align*}
	N^2 \left|\sum_{j=0}^{M-1} \int_{\frac{i}{N}}^{\frac{i+1}{N}} \int_{\frac{j}{N}}^{\frac{j+1}{N}} \pi(x,y) \,\dd y \dd x 
	- \int_{\frac{i}{N}}^{\frac{i+1}{N}} \mu^-(x) \,\dd x\right| &< \tau, \\
	\left|\int_{\frac{i}{N}}^{\frac{i+1}{N}} \int_{0}^1 \pi(x,y) \,\dd y - \mu^-(x) \,\dd x\right| &< \frac{\tau}{N^2}.
\end{align*}

Moreover, the ansatz functions for $\pi$ and $\mu^-$ are constant on $\left(\frac{i}{N},\frac{i+1}{N}\right)$, which induces equivalence to
\begin{align*}
	\int_{\frac{i}{N}}^{\frac{i+1}{N}} \left|\int_{0}^1 \pi(x,y) \,\dd y  - \mu^-(x) \right| \,\dd x &< \frac{\tau}{N^2}.
\end{align*}
This implies that if the solver terminates, we have
\begin{align*}
	\left\| \int_0^1 \pi(\cdot, y)\,\dd y - \mu^-(\cdot) \right\|_{L^1((0,1))} < \frac{\tau}{N}.
\end{align*}
We summarize the choices for the consistent mapping of quantities arising from the discretization to quantities the solver operates on in 
Table\ \ref{tbl:discretization_mapping}.
\begin{table}[htb]
\caption{Mapping discretization quantities to solver quantities.}\label{tbl:discretization_mapping}
\begin{tabular}{ccc}
	\hline
	Coefficient & Solver Quantity & Conversion \\ \hline
	$\pi_{ij}$ & $\bar{\pi}_{ij}$ & $\bar{\pi}_{ij} = \gamma \pi_{ij}$ \\
	$c_{ij}$ & $\bar{c}_{ij}$ & $\bar{c}_{ij} = c_{ij}$ \\	
	$\mu^-_{i}$ & $\bar{\mu}_{i}^-$ & $\bar{\mu}^-_{i} = N \mu^-_{i}$ \\		
	$\mu^+_{j}$ & $\bar{\mu}_{j}^+$ & $\bar{\mu}^+_{j} = N \mu^+_{j}$ \\			
	$\alpha_{i}$ & $\bar{\alpha}_{i}$ & $\bar{\alpha}_{i} = \alpha_{i}$ \\		
	$\beta_{j}$ & $\bar{\beta}_{j}$ & $\bar{\beta}_{j} = \beta_{j}$ \\				
	$J$ & $\bar{J}$ & $\bar{J} = J N^2 \gamma$ \\					
	$\tau$ & $\bar{\tau}$ & $\bar{\tau} = \tau N$ \\ \hline
\end{tabular}
\end{table}
Finally, we make a note on the calculation of the coefficients $c_{ij}$ for the cost function $c(x,y) \coloneqq (x - y)^2$:
\begin{align*}
	c_{ij} &= N^2 \int_{\frac{i}{N}}^{\frac{i+1}{N}} \int_{\frac{j}{N}}^{\frac{j+1}{N}} (x - y)^2\,\dd y \dd x = ... = \frac{1}{N^2}\left((i - j)^2 + \frac{1}{6}  \right).
\end{align*}

\bigskip
\noindent

\textbf{Conflict of Interest:} The authors declare that they have no conflict of interest.

\bibliographystyle{plain}
\bibliography{refs}

\end{document}